\def\thesection{\arabic{section}}
\def\theequation{\thesection.\arabic{equation}}
\newcommand{\ds} {\displaystyle}
\newcommand{\e}{\epsilon}
\newcommand{\Om} {\Omega}
\newcommand{\ra} {\rightarrow}
\newcommand{\De} {\Delta}
\newcommand{\la} {\lambda}
\newcommand{\noi} {\noindent}
\newcommand{\mb} {\mathbb}
\newcommand{\mc} {\mathcal}
\markboth{\small} {\small Fractional parabolic equation with singular nonlinearity }
\def\theequation{\@arabic{\c@section}.\@arabic{\c@equation}}
\def\QED{\hfill {$\square$}\goodbreak \medskip}
\newtheorem{Theorem}{Theorem}[section]
\newtheorem{Lemma}[Theorem]{Lemma}
\newtheorem{Proposition}[Theorem]{Proposition}
\newtheorem{Remark}[Theorem]{Remark}
\newtheorem{Definition}[Theorem]{Definition}
\begin{document}

\title
{ Existence and stabilization results for a singular parabolic equation involving the fractional Laplacian }

\author{J. Giacomoni\footnote{LMAP (UMR CNRS 5142) Bat. IPRA,
  Avenue de l'Universit\'e
   F-64013 Pau, France. email:jacques.giacomoni@univ-pau.fr}, ~~ T. Mukherjee\footnote{Department of Mathematics, Indian Institute of Technology Delhi,
Hauz Khaz, New Delhi-110016, India.
 e-mail: tulimukh@gmail.com}~ and ~K. Sreenadh\footnote{Department of Mathematics, Indian Institute of Technology Delhi,
Hauz Khaz, New Delhi-110016, India.
 e-mail: sreenadh@gmail.com} }

\date{}

\maketitle

\begin{abstract}

In this article, we study the following parabolic equation involving the fractional Laplacian with singular nonlinearity
\begin{equation*}
 \quad (P_{t}^s) \left\{
\begin{split}
 \quad u_t + (-\De)^s u &=  u^{-q} + f(x,u), \;u >0\; \text{in}\;
(0,T) \times \Om, \\
  u &= 0 \; \mbox{in}\; (0,T) \times (\mb R^n \setminus\Om),\\
 \quad \quad \quad \quad u(0,x)&=u_0(x) \; \mbox{in} \; {\mb R^n},
\end{split}
\quad \right.
\end{equation*}
where  $\Om$ is a bounded domain in $\mb{R}^n$ with smooth boundary $\partial \Om$, $n> 2s, \;s \in (0,1)$, $q>0$,
${q(2s-1)<(2s+1)}$, $u_0 \in L^\infty(\Om)\cap X_0(\Om)$  and $T>0$. We suppose that the map $(x,y)\in \Om \times \mb R^+ \mapsto f(x,y)$ is a bounded below Carath\'{e}odary function, locally Lipschitz with respect to second variable and {uniformly} for $x \in \Om$ it satisfies
 \begin{equation}\label{cond_on_f}
{ \limsup_{y \to +\infty} \frac{f(x,y)}{y}<\la_1^s(\Om)},
 \end{equation}
 where $\la_1^s(\Om)$ is the first eigenvalue of $(-\De)^s$ in $\Om$ with homogeneous Dirichlet boundary condition in $\mb R^n \setminus \Om$. We prove the existence and uniqueness of weak solution to $(P_t^s)$ on assuming $u_0$ satisfies an appropriate cone condition. We use the semi-discretization in time with implicit Euler method and study the stationary problem to prove our results. {We also show additional } regularity on the solution of $(P_t^s)$ when we regularize our initial function $u_0$.
\medskip

\noi \textbf{Key words:} Non-local operator, fractional Laplacian, singular nonlinearity, parabolic equation.

\medskip

\noi \textit{2010 Mathematics Subject Classification:} 35R11, 35R09,
35A15.

\end{abstract}

\section{Introduction}
In this paper, we study the existence and uniqueness of weak solution for the following fractional parabolic equation with singular nonlinearity
\begin{equation*}
 \quad (P_{t}^s) \left\{
\begin{split}
 \quad u_t + (-\De)^s u &=  u^{-q} + f(x,u), \;u >0\; \text{in}\;
\Lambda_T, \\
  u &= 0 \; \mbox{in}\; \Gamma_T,\\
 u(0,x)&=u_0(x) \; \mbox{in} \; {\mb R^n},
\end{split}
\quad \right.
\end{equation*}
where  $\Lambda_T = (0,T) \times \Om$, $\Gamma_T= (0,T) \times (\mb R^n \setminus \Om)$, $\Om$ is a bounded domain in $\mb{R}^n$ with smooth boundary $\partial \Om$ (atleast $C^2$), $n>2s,\; s \in (0,1),\; q>0, \; q(2s-1)<(2s+1)$ and $T>0$. The map $(x,y)\in \Om \times \mb R^n \mapsto f(x,y)$ is assumed to be a bounded below Carath\'{e}odary function, locally Lipschitz with respect to second variable and {uniformly} for $x \in \Om$ it satisfies
 $$ { \limsup_{y \to +\infty} \frac{f(x,y)}{y}<\la_1^s(\Om)}, $$
 where $\la_1^s(\Om)$ is the first eigenvalue of $(-\De)^s$ in $\Om$ with (homogeneous) Dirichlet boundary condition in $\mb R^n \setminus \Om$. The fractional Laplace operator $(-\De)^s$ is defined as
$$ (-\De)^s u(x) = 2C^s_n\mathrm{P.V.}\int_{\mb R^n} \frac{u(x)-u(y)}{\vert x-y\vert^{n+2s}}\,\mathrm{d}y$$
{where $\mathrm{P.V.}$ denotes the Cauchy principal value and $C^s_n=\pi^{-\frac{n}{2}}2^{2s-1}s\frac{\Gamma(\frac{n+2s}{2})}{\Gamma(1-s)}$, $\Gamma$ being the Gamma function.

\noi In this article,
we will be concerned with the  nonlocal  problem $(P^s_t)$  that {involves the}
fractional Laplacian. A large variety of diffusive problems in Physics  are satisfactorily described by the classical Heat
equation.  However, anomalous diffusion that follow non-Brownian scaling  is nowadays intensively
studied  with wide range of  applications  in  physics, finance, biology and many others. The  governing equations of such mathematical models involve the fractional Laplacian.  For a detailed survey on this we refer to  \cite{vazquez1, vazquez2} and references therein. It is natural to study the {local and global} existence and stabilization results for such problems. \\

\noi {Singular parabolic problems in the local case has been studied by authors in \cite{fm,dm,ags}. The inspiring point for us was the work of M. Badra et al. \cite{bbg}, where the  existence and stabilization results for parabolic problems where the principal part of the equation is the $p$-Laplacian operator, has been studied  when $0<q < 2+ \frac{1}{p-1}$. In \cite{bg} Bougherara and Giacomoni authors proved the existence of unique mild solution to  the problem for all $q>0$ when $u_0 \in (C_0(\overline{\Omega}))^+$. In the present  work we extend the results obtained in \cite{bbg} to the non-local case. However, there is a substantial difference between local and nonlocal operators. This difference is reflected in the way of  construction of sub and super solutions of stationary problems associated to $(P^s_t)$ as well as the validity of the weak comparison principle. Nonethless, we will show that the semi-discretization  in time method used in \cite{bbg} can still be efficient in this case.  }\\

 \noi {Coming to the non-local case, singular elliptic equations involving fractional laplacian has been studied by Barios et al.  in \cite{bbmp} and Giacomoni et al. in \cite{TJS}. More specifically, existence and multiplicity results for the equation
 \[(-\De)^su = \la u^{-q}+ u^p \; \text{in}\; \Om,\; u=0 \; \text{in}\; \mb R^n\setminus \Om\]
 has been shown for $0<q\leq1$ and $0<p< 2^*_s-1$ where $2^*_s= \displaystyle\frac{2n}{n-2s}$ in \cite{bbmp} and $p=2^*_s-1$ in \cite{TS1}. Whereas the case $q>0$ and $p= 2^*_s-1$ has been studied in \cite{TJS}. {Concerning the parabolic problems involving the fractional laplacian, we cite \cite{vazquez1,vazquez2,ai,fk} and references therein. Caffarelli and Figalli studied the regularity of solutions to fractional parabolic obstacle problem in \cite{cf}. In \cite{kimlee}, authors studied the  H\"{o}lder estimates for singular problems of the type $(-\De )^s u^m +u_t=0$ where $\frac{n-2s}{n+2s}<m<1.$ In \cite{tp},  the summability of solutions with respect to the summability of the data is studied. In \cite{ap}, authors studied  the influence of Hardy potential on the existence and nonexistence  of positive solutions for fractional heat equation.   To the best of our knowledge there are no works  on parabolic equations  with fractional laplacian and singular nonlinearity.} \\

\noi { In this work, we first define the positive cone motivated from the work of \cite{aJs} and obtain the existence of solutions in this cone for  the
 elliptic  problem $(S)$ in section 2 associated to the semi-discretization of  $(P^s_t).$   Using this, we proved the existence and uniqueness of
 solution and its regularity for the parabolic problem (see $(G^s_t)$ in section 2 with bounded source term $h(x,t)$ and principal diffusion operator $(-\De)^s  - u^{-q}$  in section $4$). Finally using the new uniqueness results for the stationary problem  proved in section $5$, we prove the existence and uniqueness of solutions to the problem $(P^s_t)$ in section $6$. Thanks to nonlinear accretive operators theory, we also find that these solutions  are more regular when the regularity assumption is refined on the initial condition. We end our paper by showing that the solution to $(P^s_t)$ converges to the unique solution of its stationary problem as $t \to \infty$ in section $7$. In this aim, we extend existence and regularity results about the stationary problem proved in  \cite{aJs}.}

\section{Functional Setting and Main results}
We denote the usual fractional Sobolev space by $H^s(\Om)$ endowed with the  Gagliardo norm
\[\|u\|_{H^s(\Om)}= \|u\|_{L^2(\Om)}+ \left(\int_\Om \int_\Om \frac{|u(x)-u(y)|^2}{|x-y|^{n+2s}}~dxdy\right)^{\frac12}. \]
Then we consider the following space
\[X(\Om)= \left\{u|\;u:\mb R^n \ra\mb R \;\text{is measurable},
u|_{\Om} \in L^2(\Om)\;
 \text{and}\;  \frac{\left(u(x)- u(y)\right)}{ |x-y|^{\frac{n+2s}{2}}}\in
L^2(Q)\right\},\]
\noi where $Q=\mb R^{2n}\setminus(\mc C\Om\times \mc C\Om)$ and
 $\mc C\Om := \mb R^n\setminus\Om$. The space $X(\Om)$ is endowed with the norm defined as
\begin{align*}
 \|u\|_{X(\Om)} = \|u\|_{L^2(\Om)} +\left( \int_{Q}\frac{|u(x)-u(y)|^{2}}{|x-y|^{n+2s}}dx
dy\right)^{\frac12}.
\end{align*}
Now we define the space
 $ X_0(\Om) = \{u\in X(\Om) : u = 0 \;\text{a.e. in}\; \mb R^n\setminus \Om\}$
equipped with the norm
\[\|u\|_{X_0(\Om)}=\left( C_n^s\int_{Q}\frac{|u(x)-u(y)|^{2}}{|x-y|^{n+2s}}dx
dy\right)^{\frac12}\]
where $C_n^s$ is defined in section $1$ and {it is well known that} $X_0(\Om)$ forms a Hilbert space with this norm {(see \cite{fraceigen})}.
From the embedding results, we know that $X_0(\Om)$ is continuously and compactly embedded in $L^r(\Om)$  when $1\leq r < 2^*_s$ and the embedding is continuous but not compact if $r= 2^*_s$. For each $\alpha \geq 0$, we set
\[ C_{\alpha} = \sup \left \{ \int_{\Om} |u|^\alpha dx : \|u\|_{X_0(\Om)} = 1\right \}.\]
Then $C_0= |\Om|=$ Lebesgue measure of $\Om$ and $\int_{\Om} |u|^\alpha dx \leq C_\alpha \|u\|^{\alpha}$, for all $ u \in X_0(\Om)$. Let us consider a more general problem
\begin{equation*}
(G_t^s)\left\{
\begin{split}
 \quad u_t + (-\De)^s u &=  u^{-q} + h(t,x) ,\;u>0\; \text{in}\;
\Lambda_T, \\
  u &= 0 \; \mbox{in}\; \Gamma_T,\\
 u(0,x)&=u_0(x) \; \mbox{in} \; {\mb R^n},
\end{split}
\right.
\end{equation*}
where $T>0$, $s\in(0,1)$, $h \in L^\infty(\Lambda_T)$, $q>0$, $q(2s-1)<(2s+1)$ and $u_0 \in L^\infty(\Om) \cap X_0(\Om)$.
In order to define weak solution for the problem $(G_t^s)$, we need to introduce the following space
\[\mc A(\Lambda_T):= \{u : \; u \in L^\infty(\Lambda_T), \; u_t \in L^2(\Lambda_T), \; u \in L^\infty(0,T;X_0(\Om))\}. \]
 We have the following result as a direct consequence of  Aubin-Lions-Simon Lemma (see \cite{aubin-simon}).
\begin{Lemma}\label{AinC}
Suppose $u \in L^\infty(0,T;X_0(\Om))$ and $u_t \in L^2(\Lambda_T)$. Then $u \in C([0,T]; L^2(\Om))$ and the embedding is compact.
\end{Lemma}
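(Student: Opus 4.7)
The plan is to apply the Aubin–Lions–Simon compactness framework in the version tailored to embeddings into $C([0,T];B)$. First I would identify the correct Banach triple: take $X=X_0(\Om)$, $B=L^2(\Om)$, and $Y=L^2(\Om)$. Since $n>2s$ the critical exponent satisfies $2^\ast_s>2$, so the Sobolev-type embedding recalled just before the statement gives that $X_0(\Om)\hookrightarrow L^2(\Om)$ is compact, while $L^2(\Om)\hookrightarrow L^2(\Om)$ is trivially continuous. This is exactly the abstract setup required by Simon's lemma.

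Next I would invoke the second part of Simon's theorem (the $L^\infty$-in-time version, see \cite{aubin-simon}): if a family $F$ is bounded in $L^\infty(0,T;X)$ and $\{u_t:u\in F\}$ is bounded in $L^r(0,T;Y)$ for some $r>1$, then $F$ is relatively compact in $C([0,T];B)$. In our setting $r=2>1$ and the hypotheses $u\in L^\infty(0,T;X_0(\Om))$ and $u_t\in L^2(\Lambda_T)=L^2(0,T;L^2(\Om))$ match the assumptions verbatim when applied to bounded subsets of the functional space under consideration. This yields at once that a bounded family is relatively compact in $C([0,T];L^2(\Om))$, which is precisely the statement that the embedding is compact.

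Finally, for an individual function $u$ the continuity statement $u\in C([0,T];L^2(\Om))$ can also be obtained directly, independently of the compactness discussion: since $X_0(\Om)\hookrightarrow L^2(\Om)$, we have $u\in L^2(0,T;L^2(\Om))$ with $u_t\in L^2(0,T;L^2(\Om))$, so $u\in H^1(0,T;L^2(\Om))$ and therefore admits a representative in $C([0,T];L^2(\Om))$ by the standard one-dimensional Sobolev embedding in time. The main (and essentially only) subtlety is ensuring that $r=2>1$ in the time-derivative bound, which is what forces the target space to be $C([0,T];L^2(\Om))$ rather than merely $L^p(0,T;L^2(\Om))$; once that is observed, the conclusion is a textbook application of \cite{aubin-simon}.
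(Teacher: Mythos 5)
Your proposal is correct and follows exactly the route the paper takes: the paper gives no written proof and simply declares the lemma a direct consequence of the Aubin--Lions--Simon result of \cite{aubin-simon}, which is precisely the $L^\infty(0,T;X)$ / $L^r(0,T;Y)$, $r>1$, version you invoke with $X=X_0(\Om)$, $B=Y=L^2(\Om)$ and the compact embedding $X_0(\Om)\hookrightarrow L^2(\Om)$. Your additional direct argument for the continuity of a single $u$ via $H^1(0,T;L^2(\Om))$ is a correct (and welcome) elaboration of what the paper leaves implicit.
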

We now define the notion of weak solution for the problem $(G_t^s)$.
\begin{Definition}
We say $u \in \mc A(\Lambda_T)$ is a weak solution of $(G_t^s)$ if
\begin{enumerate}
\item[1.] for any compact subset $K \subset \Lambda_T$, $ess\inf_K u >0$,
\item[2.] for every $\phi \in \mc A(\Lambda_T)$,
\[\int_{\Lambda_T}\frac{\partial u}{\partial t}\phi~dxdt + C_n^s\int_0^T\int_{Q}\frac{(u(x)-u(y))(\phi(x)-\phi(y))}{|x-y|^{n+2s}}dydxdt
  =\int_{\Lambda_T}( u^{-q}+ h(t,x))\phi dxdt,\]
\item[3.] $u(0,x)= u_0(x)$ a.e. in $\Om$.
\end{enumerate}
\end{Definition}
We remark that because of Lemma \ref{AinC}, we get $\mc A(\Lambda_T) \subset C([0.T]; L^2(\Om))$ which means that the third point of the above definition makes sense.\\
Now, we define a conical shell $\mc C$ as the set of functions $v \in L^\infty(\Om)$ such that there exist constants $k_1,k_2>0$ such that
\begin{equation*}
\left\{
\begin{split}
&k_1\delta^s(x)\leq v \leq k_2\delta^s(x) & \text{ if } q<1,\\
&k_1 \delta^s(x) \left(\ln \left( {\frac{r}{\delta^s(x)}}\right)\right)^{\frac{1}{2}} \leq v \leq k_2 \delta^s(x) \left(\ln \left( {\frac{r}{\delta^s(x)}}\right)\right)^{\frac{1}{2}}  & \text{ if } q=1,\\
&k_1\delta^{\frac{2s}{q+1}}(x) \leq v \leq k_2 \delta^{\frac{2s}{q+1}}(x)  & \text{ if } q>1,
\end{split}
\right.
\end{equation*}
where $\delta(x) := \text{dist}(x,\partial \Om)$ for $x \in \Om$ {and $r>\text{diam}(\Om)$}. We set
\[C_0(\overline \Om) := \left\{ u \in C(\overline \Om): \; u =0 \;\text{on} \; \partial \Om\right\}.\]
We begin by considering the stationary problem $(S)$:
\begin{equation*}
 \quad (S) \left\{
\begin{split}
 \quad u + \la \left((-\De)^s u -  u^{-q}\right) &= g, \;\; \; u>0\; \text{in}\;\Om, \\
 \quad \quad \quad \quad u &= 0 \; \text{in}\; \mb R^n \setminus \Om,\\
 \end{split}
\quad \right.
\end{equation*}
where $g \in L^\infty(\Om)$ and $\la >0$ is a real parameter. The notion of weak solution is defined as follows.
\begin{Definition}
We say $ u \in X_0(\Om)$ is a weak solution of $(S)$ if
\begin{enumerate}
\item[1.] for any compact subset $K \subset \Om$, $ess\inf_K u >0$,
\item[2.] for every $\phi \in X_0(\Om)$,
\[\int_\Om u\phi~dx + \la \left( C_n^s \int_Q\frac{(u(x)-u(y))(\phi(x)-\phi(y))}{|x-y|^{n+2s}}~dxdy-\int_{\Om} u^{-q}\phi~dx  \right)= \int_{\Om}g\phi~dx.  \]
\end{enumerate}
\end{Definition}

We prove the following theorem considering the problem $(S)$.
\begin{Theorem}\label{mt1}
If $g \in L^\infty(\Om)$, $q>0$ and ${q}(2s-1)<(2s+1)$, then for any $\la>0$, problem $(S)$ has a unique weak solution $u_\la \in X_0(\Om)\cap  {\mc C \cap C^\alpha(\mb R^n)}$ where $\alpha = s$ if $q<1$, $\alpha=s-\e$ if $q=1$, for any $\e>0$ small enough and $\alpha = \displaystyle\frac{2s}{q+1}$ if $q>1$.
\end{Theorem}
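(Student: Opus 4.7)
The plan is to handle $(S)$ by truncating the singularity and recovering a solution through a sub- and super-solution argument in the conical shell $\mathcal{C}$, in the spirit of the stationary analysis of \cite{aJs}. Rewriting $(S)$ as $(-\Delta)^s u + \lambda^{-1} u = u^{-q} + \lambda^{-1} g$ makes the left-hand side coercive on $X_0(\Omega)$ for every $\lambda>0$. For $\eta>0$ I would first solve the regularized problem
$$u_\eta + \lambda\bigl((-\Delta)^s u_\eta - (u_\eta+\eta)^{-q}\bigr)= g \text{ in } \Omega,\qquad u_\eta=0 \text{ in } \mathbb{R}^n\setminus\Omega,$$
whose nonlinearity is bounded, so that existence of $u_\eta\in X_0(\Omega)\cap L^\infty(\Omega)$ follows by minimizing the associated $C^1$ functional (or by monotone iteration from a large super-solution), with an $L^\infty$ bound obtained by comparison with the linear problem of source $\|g\|_\infty + \lambda \eta^{-q}$.

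Next I would construct an $\eta$-independent sub-solution $\underline u$ and super-solution $\bar u$, both lying in $\mathcal{C}$. A small multiple of the fractional torsion-type function solving $(-\Delta)^s w = w^{-q}$ (with the logarithmic correction when $q=1$ and the $\delta^{2s/(q+1)}$-profile when $q>1$) serves as $\underline u$, while a large multiple plays the role of $\bar u$; the matching boundary asymptotics come from Ros-Oton-Serra boundary regularity and the fractional Hopf lemma, as in \cite{aJs}. The weak comparison principle for $(-\Delta)^s + \lambda^{-1} I$ applied to the regularized equation then yields
$$0 < \underline u \leq u_\eta \leq \bar u \quad \text{in } \Omega,\quad\text{uniformly in } \eta.$$
Passing to the limit $\eta\downarrow 0$ on a subsequence (weak compactness in $X_0(\Omega)$ via the energy estimate, a.e.\ convergence, and dominated convergence for the singular term thanks to the integrable majorant furnished by the lower bound $\underline u$ and the boundary profile of $\mathcal{C}$ combined with a fractional Hardy-type inequality) produces a weak solution $u_\lambda\in X_0(\Omega)\cap\mathcal{C}$.

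Finally, writing $(-\Delta)^s u_\lambda = \lambda^{-1}(g-u_\lambda) + u_\lambda^{-q}$, the right-hand side lies in $L^\infty_{\mathrm{loc}}(\Omega)$ since $u_\lambda\geq\underline u$ is strictly positive on compacts, so Silvestre's interior regularity combined with the prescribed boundary profile in $\mathcal{C}$ and a barrier argument deliver the global $C^\alpha(\mathbb{R}^n)$ statement, with $\alpha$ equal to $s$, $s-\e$ or $2s/(q+1)$ in the three regimes. For uniqueness, given two solutions $u_1,u_2$, a D\'iaz--Saa type manipulation --- subtracting the equations and testing with a suitable combination of $u_1^{q+1}-u_2^{q+1}$ and $u_i^{-q}$, together with the non-negativity of the quadratic form associated with $(-\Delta)^s$ and the strict monotonicity of $v\mapsto -v^{-q}$ --- forces $u_1=u_2$. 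The main obstacle throughout is producing the sharp boundary behaviour encoded in $\mathcal{C}$, in particular the logarithmic correction at the borderline $q=1$: this requires designing explicit fractional barriers with matching asymptotics near $\partial\Omega$ and a careful pointwise evaluation of $(-\Delta)^s$ near the boundary, where standard interior techniques are insufficient.
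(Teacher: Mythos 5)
Your overall strategy --- regularize the singularity as $(u_\eta+\eta)^{-q}$, obtain $u_\eta$ by minimizing the associated convex functional, trap $u_\eta$ between sub- and super-solutions built from the pure singular torsion function $w$ solving $(-\Delta)^s w = w^{-q}$ in $\Omega$, pass to the limit using a fractional Hardy inequality and the restriction $q(2s-1)<(2s+1)$, and conclude with Silvestre/Ros-Oton--Serra regularity and monotonicity for uniqueness --- is essentially the paper's.

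There is, however, one concrete flaw in the execution. A small multiple $\underline u = M_2 w$ is \emph{not} a subsolution of the regularized problem uniformly in $\eta$, so the lower bound $\underline u \le u_\eta$ cannot be obtained by comparison at the level of the approximating equations, as you propose. Indeed $\underline u + \lambda\bigl((-\Delta)^s \underline u - (\underline u+\eta)^{-q}\bigr) = M_2 w + \lambda M_2 w^{-q} - \lambda (M_2 w+\eta)^{-q}$, and near $\partial\Omega$ the term $\lambda M_2 w^{-q}$ blows up while $(M_2 w+\eta)^{-q}\le \eta^{-q}$ stays bounded, so the left-hand side tends to $+\infty$ and the required inequality $\le g$ fails in a boundary layer for every fixed $\eta>0$. (The super-solution $M_1 w$ is unaffected, since there the singular terms combine favourably.) The paper avoids this by using two different lower barriers at two different stages: for the regularized problems it takes $m\phi_{1,s}$, whose fractional Laplacian is $\lambda_{1,s}m\phi_{1,s}$ and hence bounded, so that the subsolution inequality is genuinely $\eta$-independent; only \emph{after} passing to the limit does it compare $u_\lambda$ with $M_2 w$ using the limit equation, where the matching singular term $-\lambda\,\underline u^{-q}$ is available to absorb $(-\Delta)^s\underline u = M_2 w^{-q}$. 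Two smaller remarks: the comparison of the minimizer $u_{\lambda,\eta}$ with the subsolution cannot be done by naively subtracting weak formulations, since a priori $u_{\lambda,\eta}$ is only nonnegative; the paper uses a convexity argument on $t\mapsto E_\lambda\bigl(u_{\lambda,\eta}+t(m\phi_{1,s}-u_{\lambda,\eta})^+\bigr)$ to get $u_{\lambda,\eta}\ge m\phi_{1,s}$. And your D\'iaz--Saa uniqueness argument, while workable, is heavier than needed here: the presence of the zeroth-order term $u$ together with the monotone map $u\mapsto -u^{-q}$ makes $A_\lambda u = u+\lambda((-\Delta)^s u - u^{-q})$ strictly monotone, so testing the difference of two solutions with $(u_1-u_2)^+$ already gives uniqueness (this is the paper's weak comparison lemma).
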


\noi In the case $q(2s-1)\geq (2s+1)$, we get less regularity on solution of $(S)$. So we will have a weaker notion of solution in this case for which we define the set
\[\Theta:= \{\phi:\;\phi: \mb R^n \to \mb R \; \text{measurable and}\; (-\De)^s\phi \in L^\infty(\Om),\; \phi\equiv 0 \; \text{on}\; \mb R^n \setminus \Om^\prime,\; \Om^\prime \Subset \Om \}.\]

\begin{Theorem}\label{geq-case}
Let $g \in L^\infty(\Om)$, $q>1$ and $q(2s-1) \geq (2s+1)$ then for any $\la>0$, there exists a $u_\la \in L^1(\mb R^n)$ satisfying $u\equiv 0$ in $\mb R^n \setminus \Om$, $\inf\limits_K u_\la >0$ for every $K \Subset \Om$  and
\[\int_\Om u_\la\phi~dx + \la \left( C_n^s \int_Q\frac{(u_\la(x)-u_\la(y))(\phi(x)-\phi(y))}{|x-y|^{n+2s}}~dxdy-\int_{\Om} u_\la^{-q}\phi~dx  \right)= \int_{\Om}g\phi~dx  \]
for any $\phi \in \Theta $. Moreover $u^{\beta}_\la \in X_0(\Om)$ where $\beta > \max \left\{1, \left(1-\frac{1}{2s} \right)\left( \frac{q+1}{2}\right) \right\}$ but $u_\la \notin X_0(\Om)$.

\end{Theorem}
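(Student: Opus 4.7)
My plan is to obtain $u_\la$ as the monotone limit of solutions to a sequence of regularised problems. For each $n\in\N$ consider
\[
(S_n):\ u + \la\bigl((-\De)^s u - (u+1/n)^{-q}\bigr) = g \text{ in } \Om,\qquad u=0 \text{ in } \mb R^n\setminus\Om.
\]
Since the singularity is truncated and the nonlinearity is now bounded and Lipschitz, a Perron--type monotone iteration between $0$ and a large constant supersolution furnishes a unique positive $u_n\in X_0(\Om)\cap L^\infty(\Om)$. As $n\mapsto(u+1/n)^{-q}$ is increasing in $n$, the standard weak comparison principle (available in this non-singular regime) gives that $(u_n)$ is nondecreasing in $n$ with a uniform $L^\infty$ bound. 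The technical heart of the construction is then to exhibit, independently of $n$, a subsolution of $(S_n)$ with the sharp boundary behaviour
\[
u_n(x)\ \geq\ c\,\de(x)^{2s/(q+1)},\qquad x\in\Om,
\]
which in particular gives $\inf_K u_n\geq c_K>0$ on every compact $K\Subset\Om$.

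Next I fix $\beta>\max\{1,(1-\tfrac{1}{2s})(q+1)/2\}$, use $u_n^{2\beta-1}\in X_0(\Om)\cap L^\infty(\Om)$ as test function in $(S_n)$, and invoke the fractional Stroock--Varopoulos inequality
\[
\int_{\mb R^n}\bigl((-\De)^s u_n\bigr)u_n^{2\beta-1}\,dx \ \geq\ \frac{2\beta-1}{\beta^2}\,\|u_n^\beta\|_{X_0(\Om)}^2,
\]
together with $(u_n+1/n)^{-q}\leq u_n^{-q}$. Using the uniform $L^\infty$ bound this yields
\[
\|u_n^\beta\|_{X_0(\Om)}^2\ \leq\ C\Bigl(1+\int_\Om u_n^{2\beta-1-q}\,dx\Bigr)
\]
with $C$ independent of $n$, and the last integral is dominated by $\int_\Om \de^{(2s/(q+1))(2\beta-1-q)}\,dx$, which is finite precisely because the chosen $\beta$ makes the $\de$-exponent strictly larger than $-1$.

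By monotonicity $u_n\nearrow u_\la$ a.e., with $u_\la\in L^\infty(\Om)\subset L^1(\mb R^n)$; weak compactness of $(u_n^\beta)$ in $X_0(\Om)$ combined with a.e.\ convergence gives $u_\la^\beta\in X_0(\Om)$. For $\phi\in\Theta$ with support in some $\Om'\Subset\Om$ I rewrite the bilinear form of $(S_n)$ as $(C_n^s)^{-1}\int_\Om u_n(-\De)^s\phi\,dx$ (justified because $\phi$ vanishes off a compact subset of $\Om$ and $(-\De)^s\phi\in L^\infty(\Om)$) and pass to the limit termwise by dominated convergence; the singular term is dominated on $\Om'$ by $c_{\Om'}^{-q}\|\phi\|_\infty$.

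To rule out $u_\la\in X_0(\Om)$ I argue by contradiction: approximate $u_\la$ in $X_0(\Om)$ by a nonnegative sequence $\phi_\e\in C_c^\infty(\Om)\subset\Theta$. Inserting $\phi_\e$ in the weak formulation, the $L^2$, source, and bilinear terms all converge to finite limits as $\e\to 0$, whereas by Fatou $\liminf_\e\int_\Om u_\la^{-q}\phi_\e\,dx\geq \int_\Om u_\la^{1-q}\,dx$, and the uniform boundary lower bound $u_\la\geq c\,\de^{2s/(q+1)}$ together with $q(2s-1)\geq 2s+1$ gives $\int_\Om u_\la^{1-q}\,dx\geq c^{1-q}\int_\Om \de^{2s(1-q)/(q+1)}\,dx=+\infty$, the desired contradiction. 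The main obstacle I foresee is the very first step: producing the uniform-in-$n$ subsolution with sharp boundary behaviour $\de^{2s/(q+1)}$. Unlike the local case one cannot localise near $\partial\Om$; one must compute $(-\De)^s$ of carefully chosen global barriers, typically in the spirit of \cite{aJs}. Once this barrier is in hand all remaining estimates are essentially bookkeeping.
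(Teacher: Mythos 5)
Your overall architecture — regularize with $(u+1/n)^{-q}$, produce a monotone bounded sequence, sandwich it between barriers behaving like $\de^{2s/(q+1)}$, test with powers of $u_n$ to get $u_n^\beta$ bounded in $X_0(\Om)$, pass to the limit against $\phi\in\Theta$ by dominated convergence, and derive $u_\la\notin X_0(\Om)$ by contradiction from the divergence of $\int_\Om u_\la^{1-q}\,dx$ — is the same as the paper's. The one genuinely different technical choice is the energy estimate: you invoke the fractional Stroock--Varopoulos inequality with the test function $u_n^{2\beta-1}$, whereas the paper uses the Lipschitz truncation $\phi_\beta$ of $r^\beta$ together with the convexity inequality $(-\De)^s\phi_\beta(u_k)\le\phi_\beta'(u_k)(-\De)^su_k$ and then lets the truncation level tend to infinity. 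Since the $u_n$ are uniformly bounded in $L^\infty$, your shortcut is legitimate and somewhat cleaner; the two inequalities are in any case two faces of the same pointwise estimate.

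There is, however, a genuine gap in your final step. You only ever establish (or rather, defer to a barrier construction) the \emph{lower} bound $u_\la\ge c\,\de^{2s/(q+1)}$, and then claim
$\int_\Om u_\la^{1-q}\,dx\ \ge\ c^{1-q}\int_\Om\de^{2s(1-q)/(q+1)}\,dx=+\infty$.
Since $q>1$, the exponent $1-q$ is negative, so raising the lower bound $u_\la\ge c\,\de^{2s/(q+1)}$ to the power $1-q$ \emph{reverses} the inequality and yields $u_\la^{1-q}\le c^{1-q}\de^{2s(1-q)/(q+1)}$, i.e.\ an upper bound on the integral — which proves nothing about divergence. What you actually need is the matching \emph{upper} barrier $u_\la\le C\,\de^{2s/(q+1)}$, from which $u_\la^{1-q}\ge C^{1-q}\de^{2s(1-q)/(q+1)}$ and the condition $q(2s-1)\ge 2s+1$ give $\int_\Om u_\la^{1-q}\,dx=+\infty$. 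The paper supplies exactly this: it takes $w$ solving $(-\De)^sw=w^{-q}$, whose two-sided behaviour $w\sim\de^{2s/(q+1)}$ is known from the cited elliptic work, and shows that $M_1w$ is a supersolution of the truncated problems uniformly in $n$ (via the weak comparison principle), so that $M_2w\le u_\la\le M_1w$. Your proposal should construct, or at least explicitly invoke, this upper barrier; note also that your integrability estimate for $\int_\Om u_n^{2\beta-1-q}\,dx$ genuinely uses only the lower barrier (there the negative exponent works in your favour), so the two halves of the sandwich are each indispensable in a different place. Once the supersolution is added and the inequality direction corrected, your argument closes.
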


\begin{Definition}
We say that ${u(t)} \in \mc C$ uniformly for each $t \in [0,T]$ when there exist $\psi_1, \psi_2 \in \mc C$ such that $\psi_1(x) \leq u(t,x) \leq \psi_2(x)$ a.e. $(t,x)\in [0,T] \times \Om $.
\end{Definition}
We prove the following existence and uniqueness result for the problem $(G^s_t)$ using semi-discretization in time with implicit Euler method, Theorem \ref{mt1}, energy estimates and the {weak comparison principle}.
\begin{Theorem}\label{mt2}
If $h(t,x) \in L^{\infty}(\Lambda_T)$, $u_0\in X_0(\Om)\cap \mc C$, $q>0$ and $q(2s-1)<(2s+1)$, then there exists a unique weak solution $u \in C([0,T]; X_0(\Om))$ for the problem $(G^s_t)$ such that ${u(t)} \in \mc C$ uniformly for each $t \in [0,T]$. Also, $u$ satisfies
\begin{equation}\label{mt2_eq}
\begin{split}
\int_0^{t} \int_{\Om} \left( \frac{\partial u}{\partial t}\right)^2 dxd\tau & + \frac12 \|u(t,x)\|_{X_0(\Om)}^2- \frac{1}{1-q}\int_{\Om} u^{1-q}(t,x)dx \\
&\quad = \int_0^{t} \int_{\Om} h(\tau, x)\frac{\partial u}{\partial t} dxd\tau + \frac12 \|u_0(x)\|_{X_0(\Om)}^2- \frac{1}{1-q}\int_{\Om} u_0^{1-q}(x)dx
\end{split}
\end{equation}
for any $t \in [0,T]$.
\end{Theorem}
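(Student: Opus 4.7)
My plan is to follow the semi-discretization scheme outlined in the introduction: rewrite one implicit Euler step as an instance of the stationary problem $(S)$, apply Theorem \ref{mt1} to produce iterates $u^k$, prove uniform (in the step size) bounds using sub/supersolutions inside $\mc C$ together with the energy identity at the discrete level, and pass to the limit via Aubin--Lions--Simon.

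\textbf{Step 1: Discretization and existence of iterates.} Fix $N\in\N$, set $\tau=T/N$ and $t_k=k\tau$. Define $h^k(x)=\frac{1}{\tau}\int_{t_{k-1}}^{t_k}h(\sigma,x)\,d\sigma\in L^\infty(\Om)$ and set $u^0=u_0$. At step $k$ we seek $u^{k+1}\in X_0(\Om)$ solving
\[
u^{k+1}+\tau\bigl((-\De)^s u^{k+1}-(u^{k+1})^{-q}\bigr)=u^k+\tau h^{k+1},
\]
which is exactly $(S)$ with $\la=\tau$ and $g=u^k+\tau h^{k+1}\in L^\infty(\Om)$. Theorem \ref{mt1} yields a unique $u^{k+1}\in X_0(\Om)\cap\mc C\cap C^\al(\Rbuco)$. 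Iterating gives the sequence $\{u^k\}_{k=0}^N$.

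\textbf{Step 2: Uniform $\mc C$-bound and $L^\infty$-bound.} Using that $h$ is bounded and $u_0\in\mc C$, I will produce $\psi_1,\psi_2\in\mc C$ (independent of $\tau$) such that, setting $M=\|h\|_{L^\infty(\Lambda_T)}+1$, they satisfy
\[
(-\De)^s\psi_1-\psi_1^{-q}\le -M,\qquad (-\De)^s\psi_2-\psi_2^{-q}\ge M,\qquad \psi_1\le u_0\le\psi_2.
\]
Such barriers are constructed via the stationary existence results of Section 5 (and the methods of \cite{aJs}) applied to the profiles that define $\mc C$. Then a weak comparison principle for $(S)$ (the map $u\mapsto u+\tau(-\De)^s u-\tau u^{-q}$ being monotone in $u>0$) yields by induction $\psi_1\le u^k\le\psi_2$ for every $k$, hence a uniform $\mc C$-bound and in particular a uniform $L^\infty$ bound.

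\textbf{Step 3: Uniform energy estimates.} Testing the discrete equation by $(u^{k+1}-u^k)/\tau\in X_0(\Om)$ and using convexity of $v\mapsto\tfrac12\|v\|_{X_0}^2$ and of $v\mapsto -\tfrac{1}{1-q}v^{1-q}$ (respectively $v\mapsto -\ln v$ when $q=1$) plus the uniform lower bound $u^k\ge\psi_1>0$ on compacts, I obtain
\[
\sum_{k=0}^{N-1}\tau\Bigl\|\tfrac{u^{k+1}-u^k}{\tau}\Bigr\|_{L^2(\Om)}^2+\tfrac12\|u^N\|_{X_0(\Om)}^2-\tfrac{1}{1-q}\int_{\Om}(u^N)^{1-q}\le C,
\]
with $C$ independent of $\tau$. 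The singular integral is controlled since $u^k\ge\psi_1\in\mc C$ so the negative part of $(u^k)^{1-q}$ is $L^1$-bounded. The main delicate point is that in the case $q>1$ the term $-\tfrac{1}{1-q}u^{1-q}$ is \emph{negative} but goes like $\de^{-2s(q-1)/(q+1)}$, so I must check it is integrable, which is precisely the role of the assumption $q(2s-1)<(2s+1)$.

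\textbf{Step 4: Passage to the limit.} Let $u_\tau$ be the piecewise-constant and $\tilde u_\tau$ the piecewise-linear interpolant of $\{u^k\}$. Step 3 gives $u_\tau,\tilde u_\tau$ bounded in $L^\infty(0,T;X_0(\Om))$ and $\partial_t\tilde u_\tau$ bounded in $L^2(\Lambda_T)$; Lemma \ref{AinC} provides compactness in $C([0,T];L^2(\Om))$. Extracting a subsequence, $\tilde u_\tau\to u$ in $C([0,T];L^2(\Om))$ and $u_\tau\to u$ pointwise a.e.\ with $\psi_1\le u\le\psi_2$ uniformly in $t$. The uniform positive lower bound $u\ge\psi_1$ on compacts of $\Lambda_T$ ensures dominated convergence for the singular term $u_\tau^{-q}\to u^{-q}$ in $L^1(\Lambda_T)$, and passing to the limit in the discrete weak formulation against test functions $\phi\in\mc A(\Lambda_T)$ yields the weak formulation of $(G_t^s)$.

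\textbf{Step 5: Uniqueness and energy identity.} For uniqueness, I take two weak solutions $u_1,u_2\in\mc A(\Lambda_T)\cap C([0,T];L^2(\Om))$ with the same initial datum, both bounded below by a function in $\mc C$, and test the difference equation by $u_1-u_2$ (admissible since $u_1-u_2\in\mc A(\Lambda_T)$). Since $y\mapsto y^{-q}$ is decreasing, $\int_\Om(u_1^{-q}-u_2^{-q})(u_1-u_2)\,dx\le 0$, which gives
\[
\tfrac12\|(u_1-u_2)(t)\|_{L^2(\Om)}^2+\int_0^t\|u_1-u_2\|_{X_0(\Om)}^2\,d\sigma\le 0,
\]
forcing $u_1\equiv u_2$. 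For the identity \eqref{mt2_eq}, since $u_t\in L^2(\Lambda_T)$ and $u\ge\psi_1$, the test function $u_t$ is admissible; one then uses $\int_\Om(-\De)^s u\cdot u_t=\tfrac12\tfrac{d}{dt}\|u\|_{X_0}^2$ and $\int_\Om u^{-q}u_t=\tfrac{1}{1-q}\tfrac{d}{dt}\int_\Om u^{1-q}$, and integrates on $(0,t)$. The $C([0,T];X_0(\Om))$-continuity follows from the identity by a standard argument combining weak continuity and convergence of the norm.

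\textbf{Main obstacle.} The crucial and most technical step is \textbf{Step 2}: constructing the uniform $\mc C$-barriers $\psi_1,\psi_2$ and rigorously applying a weak comparison principle at the nonlocal, singular level, since the boundary behaviour of $\mc C$-functions differs in the three regimes $q<1$, $q=1$, $q>1$ and each requires a tailored comparison argument. Once this is in hand, the energy bookkeeping and the limit passage proceed along now-standard lines.
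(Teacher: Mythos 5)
Your proposal follows essentially the same route as the paper: implicit Euler semi-discretization reducing each step to the stationary problem $(S)$, uniform barriers $\psi_1\le u^k\le\psi_2$ built from the solution $w$ of $(-\De)^s w=w^{-q}$ and propagated by the weak comparison principle, the discrete energy estimate obtained by testing with $(u^{k+1}-u^k)/\tau$, Aubin--Lions--Simon compactness, and uniqueness by testing the difference of two solutions with itself. All of that matches the paper's proof of Theorems 4.1 and 4.2.

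There is, however, one genuine gap, in your Step 5 derivation of the energy identity \eqref{mt2_eq}. You assert that $u_t$ is an admissible test function and then apply the chain rules $\int_\Om (-\De)^s u\, u_t = \tfrac12\tfrac{d}{dt}\|u\|_{X_0(\Om)}^2$ and $\int_\Om u^{-q}u_t=\tfrac{1}{1-q}\tfrac{d}{dt}\int_\Om u^{1-q}$. Neither step is justified at this stage: the weak formulation admits test functions in $\mc A(\Lambda_T)$, and $u_t$ is only known to lie in $L^2(\Lambda_T)$ (not in $L^\infty(\Lambda_T)$, nor in $L^\infty(0,T;X_0(\Om))$, nor with a time derivative in $L^2$); moreover the chain rule for $t\mapsto\|u(t)\|_{X_0(\Om)}^2$ presupposes absolute continuity of that map, which is essentially the $C([0,T];X_0(\Om))$ regularity you are trying to prove, not a hypothesis you may use. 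The paper avoids this circularity by proving \eqref{mt2_eq} as a two-sided inequality: the ``$\le$'' direction is inherited from the discrete scheme via the convexity inequalities for $\tfrac12\|\cdot\|_{X_0(\Om)}^2$ and $-\tfrac{1}{1-q}\int u^{1-q}$ and passage to the limit (which also yields right-continuity of $t\mapsto\|u(t)\|_{X_0(\Om)}$), and the ``$\ge$'' direction is obtained by testing with the Steklov difference quotient $\sigma_r(u)=(u(\cdot+r)-u(\cdot))/r$, which \emph{is} admissible, and letting $r\to 0^+$ using the right-continuity already established. Your argument needs to be replaced by (or reduced to) such a regularization-in-time device; as written, the identity and the $C([0,T];X_0(\Om))$ continuity are not proved.
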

The solution obtained in above theorem can be shown to be more regular under some extra assumptions as can be seen in the next result.
\begin{Proposition}\label{para_sing_prop1}
Under the hypothesis of Theorem \ref{mt2}, if $u_0 \in \overline{\mc D(L)}^{L^{\infty}(\Om)}$, where
\[{{\mc D(L)}} := \{ v \in \mc C \cap X_0(\Om): \; L(v):= (-\De)^sv -v^{-q} \in L^{\infty}(\Om)\}\]
then the solution to $(G^s_t)$ obtained in Theorem \ref{mt2} belongs to $C([0,T];C_0(\overline \Om))$. Also $u$ satisfies:
\begin{enumerate}
\item If $v$ is another solution to $(G^s_t)$ with initial condition $v_0 \in \overline{\mc D(L)}^{L^\infty(\Om)}$ and nonhomogenous term $b \in L^\infty(\Lambda_T)$, then for any $t \in [0,T]$,
    \[\|u(t,\cdot)- v(t,\cdot)\|_{L^\infty(\Om)} \leq \|u_0-v_0\|_{L^\infty(\Om)}+ \int_0^t \|h(\tau,\cdot)- b(\tau,\cdot) \|_{L^\infty(\Om)}d\tau.\]

\item If $u_0 \in \mc D(L)$ and $h \in W^{1,1}(0,T;L^\infty(\Om))$, then $u \in W^{1,\infty}(0,T; L^\infty(\Om))$, $(-\De)^su + u^{-q} \in L^\infty(\Lambda_T)$ and the following holds true for any $t \in [0,T]$,
    \[\left\|\frac{du(t,\cdot)}{dt}\right\|_{L^\infty(\Om)} \leq \|(-\De)^su_0 + u^{-q}_0+h(0,\cdot) \|_{L^\infty(\Om)}+ \int_0^T \left\|\frac{dh(\tau,\cdot)}{dt}\right\|_{L^\infty(\Om)}d\tau. \]
\end{enumerate}
\end{Proposition}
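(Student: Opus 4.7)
The plan is to recast $(G^s_t)$ as the abstract Cauchy problem $u'(t)+L(u(t))=h(t,\cdot)$ in the Banach space $L^\infty(\Om)$, where $L(v)=(-\De)^s v-v^{-q}$ on $\mc D(L)$, and to apply the nonlinear semigroup theory of Crandall--Liggett and B\'enilan--Crandall--Pazy. Both items of the conclusion follow once $L$ is shown to be m-accretive on $L^\infty(\Om)$ and the mild semigroup it generates is identified with the weak solution produced by Theorem \ref{mt2}.

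First I would verify accretivity in $L^\infty(\Om)$: given $v_1,v_2\in \mc D(L)$ and $\la>0$, set $f_i:=v_i+\la L(v_i)\in L^\infty(\Om)$; the claim is $\|v_1-v_2\|_{L^\infty(\Om)}\le \|f_1-f_2\|_{L^\infty(\Om)}$. Writing $M:=\|f_1-f_2\|_{L^\infty(\Om)}$ and $w:=v_1-v_2$, the function $(w-M)^+$ is supported on $\{v_1>v_2+M\}$, where $v_1^{-q}-v_2^{-q}\le 0$ because $v\mapsto -v^{-q}$ is non-decreasing; hence the singular contribution has the helpful sign, while the nonlocal term $\la(-\De)^s w$ enters with its natural sign via the Stampacchia-type inequality for $(-\De)^s$. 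Testing against $(w-M)^+$ as in the weak comparison principle proved in Section 5 forces $(w-M)^+\equiv 0$, and the reverse is symmetric. The range condition $R(I+\la L)\supseteq L^\infty(\Om)$ for every $\la>0$ is supplied directly by Theorem \ref{mt1}, so $L$ is m-accretive on $L^\infty(\Om)$.

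Next I would apply the Crandall--Liggett exponential formula: the implicit Euler scheme $u^\tau_n+\tau L(u^\tau_n)=u^\tau_{n-1}+\tau h^\tau_n$, which coincides with the semi-discretization used to prove Theorem \ref{mt2}, converges uniformly in $L^\infty(\Om)$ to a mild solution $u\in C([0,T];L^\infty(\Om))$ whenever $u_0\in \overline{\mc D(L)}^{L^\infty(\Om)}$. Each iterate $u^\tau_n$ belongs to $\mc D(L)\subset \mc C\cap C_0(\overline\Om)$ by Theorem \ref{mt1}, and since $C_0(\overline\Om)$ is closed in $L^\infty(\Om)$ the limit lies in $C([0,T];C_0(\overline\Om))$. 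Uniqueness from Theorem \ref{mt2} identifies this mild solution with the weak solution obtained there, and the $L^\infty$-contraction of item (1) in the statement is the classical consequence of the resolvent contraction passing to the semigroup.

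For item (2), with $u_0\in \mc D(L)$ and $h\in W^{1,1}(0,T;L^\infty(\Om))$, the B\'enilan--Crandall--Pazy regularity theorem guarantees that the mild solution is Lipschitz in $t$ with values in $L^\infty(\Om)$, so $u\in W^{1,\infty}(0,T;L^\infty(\Om))$ and $L(u)\in L^\infty(\Lambda_T)$. The quantitative bound on $\|du/dt\|_{L^\infty(\Om)}$ is obtained by applying item (1) to the pair $u(\cdot+\eta,\cdot)$, $u(\cdot,\cdot)$, dividing by $\eta$, and letting $\eta\to 0^+$, noting that the right-derivative at $t=0$ equals $h(0,\cdot)-L(u_0)\in L^\infty(\Om)$. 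The main obstacle throughout is the accretivity verification, because $v^{-q}$ blows up near $\partial\Om$; the two-sided cone bounds of Theorem \ref{mt1} ($v_i\ge k_1\delta^{\alpha}$) are essential to ensure that $v_1^{-q}-v_2^{-q}$ is integrable and that testing against the positive part of the relevant difference is legitimate in the weak formulation.
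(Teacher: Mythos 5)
Your proposal is correct and follows essentially the same route as the paper: the paper likewise proves that $L$ is m-accretive in $L^\infty(\Om)$ by testing the difference of two resolvent equations against $(u-v-\|f_1-f_2\|_{L^\infty(\Om)})^+$ (with the range condition supplied by Theorem \ref{mt1}), and then invokes the standard nonlinear semigroup results (Theorems 4.2 and 4.4 of Chapter 4 in Barbu's monograph, i.e.\ the Crandall--Liggett and B\'enilan--Crandall--Pazy theorems you cite) for the generation, the contraction estimate of item (1), and the Lipschitz regularity of item (2). Your additional remarks on identifying the mild solution with the weak solution of Theorem \ref{mt2} and on deducing $C([0,T];C_0(\overline\Om))$ from the regularity of the Euler iterates only make explicit what the paper leaves to the cited references.
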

 In order to establish Theorem \ref{mt3}, we need the following result.

\begin{Theorem}\label{mt4}
Suppose $q>0$, $q(2s-1)<(2s+1)$ and $f : \Om \times \mb R^+ \to \mb R$ be bounded below Carath{\'{e}}odary function satisfying \eqref{cond_on_f}. {Assume that} $f$ is locally Lipschitz with respect to second variable uniformly in $\Om$ and $\frac{f(x,y)}{y}$ is decreasing in $\mb R^+$ for a.e. $x \in \Om$. Then the following problem $(Q^s)$ has a unique solution $\hat u \in {X_0(\Om)\cap {\mc C}} \cap C^\alpha(\mb R^n)$ where $\alpha = s$ if $q<1$, $\alpha=s-\e$ if $q=1$, for any $\e>0$ small enough and $\alpha = \displaystyle\frac{2s}{q+1}$ if $q>1$. :
\begin{equation*}
(Q^s)\left\{
\begin{split}
(-\De)^s \hat u -{\hat u}^{-q} &= f(x, \hat u)\; \text{in} \; \Om,\\
 \hat u &= 0 \; \text{in} \; \mb R^n \setminus \Om.
\end{split}
\right.
\end{equation*}
\end{Theorem}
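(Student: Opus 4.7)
The plan is to view $(Q^s)$ as a fixed-point problem for the resolvent map furnished by Theorem \ref{mt1}. For any $\la>0$, $\hat u$ solves $(Q^s)$ if and only if $\hat u + \la((-\De)^s \hat u - \hat u^{-q}) = \hat u + \la f(x,\hat u)$, i.e., $\hat u = T(\hat u)$, where $T(v)$ denotes the unique solution of $(S)$ with right-hand side $g = v + \la f(x,v) \in L^\infty(\Om)$. I will produce a solution by exhibiting an ordered pair of sub- and super-solutions $\underline u \leq \bar u$ and running a monotone iteration of $T$ inside this order interval.

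The sub-solution is extracted from Theorem \ref{mt1} directly. Since $f$ is bounded below, $f(x,y) \geq -K$ for some $K\geq 0$; Theorem \ref{mt1} with $\la=1$ and $g\equiv -K$ yields $\underline u \in X_0(\Om) \cap \mc C \cap C^\alpha(\mb R^n)$ satisfying $(-\De)^s \underline u - \underline u^{-q} = -K - \underline u \leq f(x,\underline u)$. For the super-solution, \eqref{cond_on_f} supplies $\tilde\la \in (0, \la_1^s(\Om))$ and $M>0$ with $f(x,y) \leq \tilde\la y + M$, and the Fredholm alternative gives a unique positive solution $w \in X_0(\Om)\cap L^\infty(\Om)$ of $(-\De)^s w - \tilde\la w = M$ in $\Om$, $w\equiv 0$ outside. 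Setting $\bar u := Cw + \underline u$ and combining linearity of $(-\De)^s$ with $(Cw+\underline u)^{-q} \leq \underline u^{-q}$, one finds
\[(-\De)^s \bar u - \bar u^{-q} \geq C\tilde\la w + CM - K - \underline u \geq \tilde\la \bar u + M \geq f(x,\bar u),\]
as soon as $(C-1)M \geq K + (1+\tilde\la)\|\underline u\|_{L^\infty(\Om)}$, which holds for $C$ large. Thus $\bar u$ is a super-solution and $\underline u \leq \bar u$.

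To iterate, I use the local Lipschitz bound on $f$ over $[0,\|\bar u\|_{L^\infty}]$ with constant $L$ and pick $\la \in (0, 1/L)$ so that $y\mapsto y + \la f(x,y)$ is nondecreasing there. The map $T$ is then order-preserving on $[\underline u, \bar u]$ (via the comparison principle attached to Theorem \ref{mt1}, which follows from strict monotonicity of $u\mapsto u + \la((-\De)^s u - u^{-q})$ on the cone together with uniqueness), and the sub- and super-solution inequalities give $T(\underline u)\geq \underline u$ and $T(\bar u)\leq \bar u$. Iterating $u_0 = \underline u$, $u_{n+1} = T(u_n)$ produces a nondecreasing sequence in $[\underline u,\bar u]$. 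The uniform bound $u_n^{-q} \leq \underline u^{-q}$ together with $q(2s-1)<(2s+1)$ renders the singular term integrable against $X_0(\Om)$-test functions, and testing the resolvent equation against $u_n$ delivers a uniform $\|u_n\|_{X_0(\Om)}$-bound; hence $u_n \to \hat u$ a.e.\ and weakly in $X_0(\Om)$ for some $\hat u \in [\underline u, \bar u]$. Dominated convergence on the singular and source terms (with majorants $\underline u^{-q}$ and $\sup_{[\underline u,\bar u]}|f|$ respectively) lets me pass to the limit in the weak formulation. The regularity $\hat u \in \mc C \cap C^\alpha(\mb R^n)$ follows by re-applying Theorem \ref{mt1} to $\hat u$ viewed as the $\la=1$ solution of $(S)$ with right-hand side $\hat u + f(x,\hat u) \in L^\infty(\Om)$.

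For uniqueness, let $u_1,u_2$ be two solutions in the cone. Using a Picone-type inequality for $(-\De)^s$ with the test functions $u_2^2/u_1$ and $u_1^2/u_2$ (admissible after truncation of the form $(u_j+\e)^2/(u_i+\e)$ justified by the cone estimates, followed by $\e\to 0$) and summing the two resulting inequalities after substituting $(-\De)^s u_i = u_i^{-q} + f(x,u_i)$, I obtain
\[\int_\Om\!\Big[u_1^{1-q}\big(1-(u_1/u_2)^{q+1}\big)+u_2^{1-q}\big(1-(u_2/u_1)^{q+1}\big)\Big]\,dx \;\geq\; \int_\Om\!(u_1^2-u_2^2)\Big(\tfrac{f(x,u_2)}{u_2}-\tfrac{f(x,u_1)}{u_1}\Big)\,dx.\]
An elementary calculation shows the left-hand integrand is pointwise $\leq 0$ with equality if and only if $u_1=u_2$, while the strict monotonicity of $y\mapsto f(x,y)/y$ makes the right-hand integrand pointwise $\geq 0$; both sides must therefore vanish, forcing $u_1\equiv u_2$ a.e. The main obstacle in the whole argument is precisely this Picone step: the singular term $u^{-q}$ and the limited regularity $u_i\in\mc C$ demand delicate truncation arguments and a quantitative use of the cone bounds together with $q(2s-1)<(2s+1)$ to secure integrability at every stage of the approximation.
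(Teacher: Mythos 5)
Your proposal is correct and follows the same overall skeleton as the paper's proof: an ordered pair of sub/supersolutions in the cone, a monotone iteration based on the resolvent of $u\mapsto u+\la((-\De)^su-u^{-q})$ (your choice $\la<1/L$ is exactly the paper's shift $K_0\geq L$ written multiplicatively), and uniqueness via the discrete Picone identity with the truncated quotient test functions $(u_j+\e)^2/(u_i+\e)$ — this last part is precisely the paper's Theorem \ref{wcp}, applied symmetrically to two exact solutions rather than to a sub/supersolution pair. The one genuinely different ingredient is your supersolution. The paper takes $\overline u=Mw+M'\phi_{1,s}$ with $w$ the solution of the purely singular problem $(-\De)^sw=w^{-q}$ and must split $\Om$ into a boundary layer $\Om_\e$ (where the singular term $(M-M^{-q})w^{-q}$ dominates) and its complement (where $M'\la_{1,s}\phi_{1,s}$ dominates), tuning $\e$, $M$, $M'$ in sequence. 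You instead take $\overline u=Cw+\underline u$ with $w$ solving the coercive linear problem $(-\De)^sw-\tilde\la w=M$, and absorb the singular term through the cancellation $\underline u^{-q}-\overline u^{-q}\geq 0$ against the singular part of $(-\De)^s\underline u$; this reduces the verification to a single global algebraic inequality $(C-1)M\geq K+(1+\tilde\la)\|\underline u\|_{L^\infty(\Om)}$ and avoids the two-region analysis entirely, at the modest cost of invoking positivity and boundedness of the solution of the non-resonant linear problem (which is standard for $\tilde\la<\la_1^s(\Om)$ and consistent with the paper's toolkit, cf.\ \cite{Quass}). Your remaining deviations are stylistic: you pass to the limit by weak compactness in $X_0(\Om)$ plus dominated convergence where the paper uses Dini/Arzel\`a--Ascoli in $C_0(\overline\Om)$, and you obtain the $\mc C\cap C^\alpha(\mb R^n)$ regularity by identifying $\hat u$ with the unique Theorem \ref{mt1} solution for $g=\hat u+f(x,\hat u)\in L^\infty(\Om)$ rather than rerunning the regularity proof of \cite{aJs}; both are clean and the latter identification is a nice shortcut. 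One small caution you share with the paper: the comparison lemma \ref{weak_comp_principle} is stated for right-hand sides in $L^2(\Om)$, while $A_\la\overline u$ involves $\underline u^{-q}-\overline u^{-q}$, which need not be in $L^2(\Om)$ for large $q$; the ordering step for the supersolution should really be phrased against the weak (dual-space) formulation, as in Theorem \ref{wcp}.
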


Coming back to our original problem $(P^s_t)$, we have the following theorem {:}
\begin{Theorem}\label{mt3}
Assume $q>0$, $q(2s-1)<(2s+1)$ and $f(t,x)$ to be a bounded below Carath\'{e}odory function, locally Lipschitz with respect to second variable uniformly in $x \in \Om$ and satisfies \eqref{cond_on_f}. If $u_0 \in X_0(\Om)\cap \mc C$, then for any $T>0$, there exists a unique weak solution $u$ to $(P^s_t)$ such that ${u(t)} \in \mc C$ uniformly for $t \in [0,T]$  and $u \in C([0,T]; X_0(\Om))$. Moreover for any $t \in [0,T]$,
\begin{equation*}
\begin{split}
&\int_0^{t} \int_{\Om} \left( \frac{\partial u}{\partial t}\right)^2 dxd\tau + \frac12  \|u(t,x)\|_{X_0(\Om)}^2- \frac{1}{1-q}\int_{\Om} u^{1-q}(t,x)dx \\
&\quad = \int_\Om F(x,u(t)) dx + \frac12 \|u_0(x)\|_{X_0(\Om)}^2- \frac{1}{1-q}\int_{\Om} u_0^{1-q}(x)dx- \int_\Om F(x,u_0)dx,
\end{split}
\end{equation*}
where $F(x,z):= \int_0^z f(x,z)dz$.
\end{Theorem}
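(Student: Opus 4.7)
The plan is to combine Theorem \ref{mt2} (which handles $(G^s_t)$ with a prescribed bounded source) with a Schauder fixed point in a convex set of functions trapped between a stationary sub- and super-solution in the cone $\mathcal C$. First, using \eqref{cond_on_f} together with the lower bound on $f$, choose $\theta \in (0,\la_1^s(\Om))$ and $C>0$ such that $-C \le f(x,y) \le \theta y + C$ for all $y\ge 0$ and a.e.\ $x \in \Om$. A sub-solution $\underline\psi$ is produced by Theorem \ref{mt1} applied with $g \equiv -C$ (then rescaled so $\underline\psi \le u_0$), yielding $\underline\psi \in \mathcal C$ with $(-\De)^s\underline\psi - \underline\psi^{-q} \le -C \le f(x,\underline\psi)$. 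For $\bar\psi$ we solve the linearly perturbed stationary problem
\begin{equation*}
(-\De)^s \bar\psi - \theta\bar\psi - \bar\psi^{-q} = C \text{ in } \Om, \qquad \bar\psi = 0 \text{ in } \R^n\setminus\Om,
\end{equation*}
by the arguments of Theorem \ref{mt4} adapted to this $f$-free nonlinearity, noting that $(-\De)^s - \theta$ is coercive on $X_0(\Om)$ precisely because $\theta < \la_1^s(\Om)$. Enlarging by a scalar multiple places $\bar\psi \in \mathcal C$ above $u_0$.

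Next I set $\mathcal K := \{v \in L^\infty(\Lambda_T) : \underline\psi(x) \le v(t,x) \le \bar\psi(x) \text{ a.e.}\}$, which is closed and convex in $C([0,T];L^2(\Om))$. For $v \in \mathcal K$, the map $h_v(t,x) := f(x,v(t,x))$ belongs to $L^\infty(\Lambda_T)$ by the local Lipschitz hypothesis, so Theorem \ref{mt2} defines $\Phi(v) \in \mathcal A(\Lambda_T)$ as the unique weak solution of $(G^s_t)$ with source $h_v$ and datum $u_0$. The weak comparison principle for $w \mapsto w_t + (-\De)^s w - w^{-q}$ (valid since $y\mapsto -y^{-q}$ is monotone non-increasing) combined with the stationary (super-/sub-)solution property of $\bar\psi, \underline\psi$ against the source $h_v$ yields $\underline\psi \le \Phi(v) \le \bar\psi$, so $\Phi(\mathcal K) \subset \mathcal K$. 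The energy identity \eqref{mt2_eq} together with the $L^\infty$ cone bound (which keeps $\Phi(v)^{-q}$ bounded away from the boundary) provides uniform estimates of $\Phi(v)$ in $L^\infty(0,T;X_0(\Om)) \cap H^1(0,T;L^2(\Om))$; Aubin--Lions--Simon (Lemma \ref{AinC}) gives precompactness in $C([0,T];L^2(\Om))$, while continuity of $\Phi$ in this topology follows from the linearity of $(G^s_t)$ in $h$ and the equi-Lipschitz character of $f$ on $[\inf\underline\psi,\sup\bar\psi]$. Schauder's fixed point theorem then supplies $u \in \mathcal K$ with $\Phi(u) = u$, i.e.\ a weak solution of $(P^s_t)$ with $u(t) \in \mathcal C$ uniformly and $u \in C([0,T];X_0(\Om))$.

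Uniqueness follows by testing the equation satisfied by the difference $w = u_1-u_2$ of two such solutions with $w$ itself: the singular contribution $-\int_\Om (u_1^{-q}-u_2^{-q})w \le 0$ is favourable by monotonicity of $y \mapsto -y^{-q}$, while the local Lipschitz bound $|f(x,u_1)-f(x,u_2)| \le L|w|$ on the range $[\inf\underline\psi,\sup\bar\psi]$ drives a Gr\"onwall inequality for $\|w(t)\|_{L^2(\Om)}^2$ which forces $w\equiv 0$. The energy identity stated in the theorem is obtained by passing to the limit in the implicit-Euler scheme underlying Theorem \ref{mt2}, replacing $h$ by $f(x,u)$ and invoking
\begin{equation*}
\int_0^t\!\int_\Om f(x,u)\,\partial_\tau u \, dx\,d\tau = \int_\Om F(x,u(t))\,dx - \int_\Om F(x,u_0)\,dx,
\end{equation*}
whose discrete analogue is justified by the Carath\'eodory/locally Lipschitz structure of $f$.

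The main obstacle is the construction and control of the upper barrier $\bar\psi \in \mathcal C$: this is exactly where \eqref{cond_on_f} is indispensable, as it ensures the shifted operator $(-\De)^s-\theta$ remains coercive and admits a stationary super-solution with the sharp boundary behaviour dictated by $\mathcal C$. A secondary delicate point is the extension of the weak comparison principle to the present singular/parabolic setting with test functions only guaranteed to lie in $\mathcal A(\Lambda_T)$; this is handled by truncating $(u_1-u_2)_+$ away from $\partial\Om$ and exploiting the cone bound before passing to the limit, in the spirit of the corresponding step in Theorems \ref{mt1} and \ref{mt4}.
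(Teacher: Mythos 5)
Your argument is essentially correct, but it reaches the existence part by a genuinely different route from the paper. The paper never invokes a fixed point theorem at the continuous level: it runs the implicit Euler scheme directly on $(P^s_t)$, solving at each step the elliptic problem $u^k-\De_t\bigl((-\De)^su^k+(u^k)^{-q}\bigr)=\De_t f(x,u^{k-1})+u^{k-1}$ with the nonlinearity \emph{lagged} one time step, traps $u^k$ between the barriers $\underline{u}=\eta w$ and $\overline{u}=Mw+M'\phi_{1,s}$ of Theorem \ref{mt4} via iterated applications of Lemma \ref{weak_comp_principle}, and then passes to the limit exactly as in Theorem \ref{mt2}, using the uniform Lipschitz bound on $f$ only to show $h_{\De_t}(t,x)=f(x,u_{\De_t}(t-\De_t,x))\to f(x,u)$. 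You instead use Theorem \ref{mt2} as a black box to define the solution operator $\Phi$ and close with Schauder; this buys modularity (in particular the energy identity comes for free from \eqref{mt2_eq} plus a chain rule for $F(x,u(t))$, rather than by rerunning the discrete limit), at the cost of having to verify continuity and invariance of $\Phi$. Two points there deserve correction or emphasis. First, $(G^s_t)$ is \emph{not} linear in $h$ (the term $u^{-q}$ is nonlinear), so continuity of $\Phi$ does not follow from linearity; it does follow, though, from monotonicity of $y\mapsto -y^{-q}$ by testing the difference of two solutions against itself (or from Proposition \ref{para_sing_prop1}(1)), which in fact makes $\Phi$ Lipschitz on $C([0,T];L^2(\Om))$ with constant of order $LT$ --- so Banach's fixed point theorem on short intervals would do as well. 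Second, the invariance $\Phi(\mc K)\subset\mc K$ cannot be read off the statement of Theorem \ref{mt2}, whose barriers $mw\le u\le Mw$ scale with $\|h\|_{L^\infty}$ in a way that does not close against the bound $f(x,y)\le\theta y+C$ (the supersolution inequality $(M-M^{-q})w^{-q}\ge\theta M\|w\|_\infty+C$ fails in the interior for large $M$); you must insert the refined barrier $Mw+M'\phi_{1,s}$, whose construction uses $\theta<\la_{1,s}$ exactly as in \eqref{mt4_4}--\eqref{mt4_7}, into the discrete scheme underlying Theorem \ref{mt2} and apply Lemma \ref{weak_comp_principle} step by step --- which is precisely what the paper does. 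With these repairs your proof is sound, and the uniqueness and energy-identity arguments coincide with the paper's.
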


Using Proposition \ref{para_sing_prop1}, on a similar note we have the following proposition regarding the solution of  problem $(P_t^s)$.

\begin{Proposition}\label{para_sing_prop2}
Assume that the hypothesis of Theorem \ref{mt3} are true. If $u_0 \in \overline{\mc D(L)}^{L^\infty(\Om)}$, then the solution of $(P_t^s)$ belongs to $C([0,T];C_0(\overline{\Om}))$. Let $\alpha\geq0$ denotes the Lipschitz constant of $f(\cdot,x)$ in $[\underline{u},\overline{u}]$, where $\underline{u}$ and $\overline{u}$ denotes the sub and super solution respectively of $(Q^s)$, then the following holds:
\begin{enumerate}
\item If $v$ is another weak solution to $(P_t^s)$ with initial condition $v_0 \in \overline{\mc D(L)}^{L^\infty(\Om)} $, then
    \[\|u(t,\cdot)-v(t,\cdot)\|_{L^\infty(\Om)} \leq \exp(\alpha t) \|u_0-v_0\|_{L^\infty(\Om)},\; 0\leq t\leq T.\]
    \item If $u_0 \in \mc D(L)$, then $u \in W^{1,\infty}([0,T];L^\infty(\Om))$ and $(-\De)^s u + u^{-q} \in L^\infty(\Lambda_T)$. Also the following holds:
        \[\left\|\frac{du(t,\cdot)}{dt}\right\|_{L^\infty(\Om)} \leq \exp(\alpha t)\|(-\De)^su_0 + u^{-q}_0+f(x,u_0) \|_{L^\infty(\Om)}.\]
\end{enumerate}
\end{Proposition}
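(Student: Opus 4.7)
\textbf{Proof proposal for Proposition \ref{para_sing_prop2}.}

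The plan is to regard $(P^s_t)$ as an instance of $(G^s_t)$ with the source term $h(t,x):=f(x,u(t,x))$, and then to import the linear-type estimates of Proposition \ref{para_sing_prop1} combined with a Gronwall argument to absorb the Lipschitz dependence of $f$ on $u$. First I would record that by Theorem \ref{mt3}, the solution $u$ satisfies $\uline{u}(x)\leq u(t,x)\leq\oline{u}(x)$ uniformly in $t\in[0,T]$, so $u$ takes values in a fixed bounded interval on which $f(\cdot,y)$ is Lipschitz with constant $\alpha$. In particular $h\in L^\infty(\Lambda_T)$, and $u$ is a weak solution of $(G^s_t)$ with this specific $h$ and initial datum $u_0\in\overline{\mc D(L)}^{L^\infty(\Om)}$. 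Invoking the first part of Proposition \ref{para_sing_prop1} immediately gives $u\in C([0,T];C_0(\oline\Om))$.

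For statement $(1)$, I would apply Proposition \ref{para_sing_prop1}$(1)$ to the pair $(u,v)$ with sources $h(t,x)=f(x,u(t,x))$ and $b(t,x)=f(x,v(t,x))$. Using that both $u(t)$ and $v(t)$ lie in $[\uline{u},\oline{u}]$ and the Lipschitz bound on $f$,
\[
\|u(t,\cdot)-v(t,\cdot)\|_{L^\infty(\Om)}\leq \|u_0-v_0\|_{L^\infty(\Om)}+\alpha\int_0^t\|u(\tau,\cdot)-v(\tau,\cdot)\|_{L^\infty(\Om)}\,d\tau,
\]
so the classical Gronwall lemma yields the exponential estimate.

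For statement $(2)$, assuming $u_0\in\mc D(L)$, I would use a bootstrap: set $u^h(t,x):=u(t+h,x)$ for $h>0$ small, which is itself a weak solution on $[0,T-h]$ with initial datum $u(h,\cdot)\in\overline{\mc D(L)}^{L^\infty(\Om)}$. Applying $(1)$ with $v=u^h$ gives
\[
\|u(t+h,\cdot)-u(t,\cdot)\|_{L^\infty(\Om)}\leq e^{\alpha t}\|u(h,\cdot)-u_0\|_{L^\infty(\Om)}.
\]
Hence, once I control $\|u(h,\cdot)-u_0\|_{L^\infty(\Om)}\leq Ch$ for some $C>0$, I will obtain that $u$ is Lipschitz from $[0,T]$ into $L^\infty(\Om)$. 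The required bound on $\|u(h,\cdot)-u_0\|_{L^\infty(\Om)}$ comes from applying Proposition \ref{para_sing_prop1}$(2)$ on a small time window, since for $u_0\in\mc D(L)$ the quantity $(-\De)^su_0+u_0^{-q}+f(x,u_0)\in L^\infty(\Om)$ and the source $h(t,\cdot)=f(\cdot,u(t,\cdot))$ is continuous in $L^\infty(\Om)$ (using the continuity just established), allowing us to bound the initial difference quotient. Once $u\in W^{1,\infty}(0,T;L^\infty(\Om))$ is known, the Lipschitz property of $f$ in its second variable transfers to give $h\in W^{1,\infty}(0,T;L^\infty(\Om))\subset W^{1,1}(0,T;L^\infty(\Om))$, and the bound from Proposition \ref{para_sing_prop1}$(2)$ reads
\[
\left\|\frac{du(t,\cdot)}{dt}\right\|_{L^\infty(\Om)}\leq \|(-\De)^s u_0+u_0^{-q}+f(x,u_0)\|_{L^\infty(\Om)}+\alpha\int_0^t\left\|\frac{du(\tau,\cdot)}{dt}\right\|_{L^\infty(\Om)}d\tau,
\]
and Gronwall once more delivers the stated inequality, together with $(-\De)^s u+u^{-q}\in L^\infty(\Lambda_T)$ as a byproduct of the equation.

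The main obstacle I anticipate is the circularity in the regularity step: to invoke Proposition \ref{para_sing_prop1}$(2)$ one needs $h=f(\cdot,u)\in W^{1,1}(0,T;L^\infty(\Om))$, yet establishing this requires a priori Lipschitz regularity of $u$ in time. I plan to break this loop by a semi-discretization/approximation in the spirit of the construction used to prove Theorem \ref{mt2}: one approximates $u_0$ by $u_0^n\in\mc D(L)$ and the source by piecewise-constant interpolants $h^n$ for which the regularity statement is direct, obtains uniform Lipschitz bounds in $t$ with values in $L^\infty(\Om)$ through the Gronwall argument above, and then passes to the limit using the stability estimate from part $(1)$. Continuity of the map $t\mapsto f(\cdot,u(t,\cdot))$ in $L^\infty(\Om)$, which is the key ingredient for the limiting argument, follows from $u\in C([0,T];C_0(\oline\Om))$ and the Lipschitz dependence of $f$ on its second variable.
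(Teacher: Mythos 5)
Your overall strategy --- read $(P^s_t)$ as $(G^s_t)$ with $h=f(\cdot,u)$, import the $L^\infty$-contraction estimates of Proposition \ref{para_sing_prop1}, and close with Gronwall --- is exactly the route the paper takes (its proof is a one-line deferral to Proposition \ref{para_sing_prop1} and to the proof of Proposition 0.2 of \cite{bbg}, which runs precisely this argument in the local case). Part (1) of your proposal is complete and correct, modulo the small remark that $\underline{u},\overline{u}$ must be chosen so as to sandwich both $u_0$ and $v_0$, so that the single Lipschitz constant $\alpha$ governs both orbits.

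Part (2), however, contains a genuine gap that you yourself flag: the passage through Proposition \ref{para_sing_prop1}(2) is circular, since it requires $f(\cdot,u)\in W^{1,1}(0,T;L^\infty(\Om))$, i.e.\ the very time-regularity you are trying to establish, and your proposed repair (re-running the semi-discretization with piecewise-constant interpolants) is only a sketch: the uniform-in-$n$ Lipschitz bound for the approximants, which is the whole content of the step, is not exhibited. The standard and much shorter repair avoids Proposition \ref{para_sing_prop1}(2) entirely: for $u_0\in\mc D(L)$ the constant-in-time function $v(t,\cdot)\equiv u_0$ belongs to $\mc A(\Lambda_T)$ and solves $(G^s_t)$ with source $b\equiv L(u_0)=(-\De)^su_0-u_0^{-q}\in L^\infty(\Om)$ and datum $u_0$; Proposition \ref{para_sing_prop1}(1) applied to the pair $(u,v)$ gives
\[
\|u(h,\cdot)-u_0\|_{L^\infty(\Om)}\leq\int_0^h\|f(\cdot,u(\tau,\cdot))-L(u_0)\|_{L^\infty(\Om)}\,d\tau ,
\]
whence, using $u\in C([0,T];C_0(\overline{\Om}))$ and the local Lipschitz continuity of $f$, one gets $\limsup_{h\to0^+}h^{-1}\|u(h,\cdot)-u_0\|_{L^\infty(\Om)}\leq\|(-\De)^su_0-u_0^{-q}-f(\cdot,u_0)\|_{L^\infty(\Om)}$. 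Combining this with your translation estimate $\|u(t+h,\cdot)-u(t,\cdot)\|_{L^\infty(\Om)}\leq e^{\alpha t}\|u(h,\cdot)-u_0\|_{L^\infty(\Om)}$ and letting $h\to0^+$ yields the stated bound on $\left\|\frac{du(t,\cdot)}{dt}\right\|_{L^\infty(\Om)}$ directly, with no further Gronwall step; note in this connection that your final Gronwall display misquotes Proposition \ref{para_sing_prop1}(2), whose integral runs over $[0,T]$ rather than $[0,t]$, so Gronwall does not apply to it verbatim. The membership of $(-\De)^su-u^{-q}$ in $L^\infty(\Lambda_T)$ then follows from the equation once $u_t\in L^\infty(\Lambda_T)$ is known, as you say.
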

Finally, we can show the following asymptotic behavior of solutions of $(P_t^s)$.
\begin{Theorem}\label{mt5}
Under the hypothesis of Theorem \ref{mt3} and the assumption that $y \mapsto \frac{f(x,y)}{y}$ is decreasing in $(0,\infty)$ a.e. $x \in \Om$, the solutions to $(P_t^s)$ is defined in $(0,\infty)\times \Om$ and it satisfies
\[u(t) \to \hat u \text{ in } L^\infty(\Om) \text{ as } t \to \infty,\]
where $\hat u$ is defined in Theorem \ref{mt4}.
\end{Theorem}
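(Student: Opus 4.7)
The strategy is to sandwich $u(t)$ between two monotone trajectories that both converge to the unique stationary solution $\hat u$ furnished by Theorem \ref{mt4}. First I would extend $u$ globally in time: Theorem \ref{mt3} already yields existence and uniqueness on $[0,T]$ for every $T>0$ with $u(t) \in \mc C$ uniformly, so it suffices to show that the cone constants do not degenerate as $T\to\infty$, which follows from comparison with the time-independent envelopes constructed below.

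Choose $\underline{w}, \overline{w}\in\mc C$ that are respectively sub and super solutions of the stationary problem $(Q^s)$ with $\underline{w}\leq u_0\leq \overline{w}$ a.e.\ in $\Om$; such envelopes are available from the sub--super solution construction used in the proof of Theorem \ref{mt4}. Denote by $\underline{u}(t)$ and $\overline{u}(t)$ the solutions of $(P^s_t)$ starting from $\underline{w}$ and $\overline{w}$, respectively. The weak comparison principle for $(G^s_t)$ (already used in the proof of Theorem \ref{mt2}) applied to the translated trajectory $\underline{u}(\cdot+h)$ versus $\underline{u}(\cdot)$, together with the fact that $\underline{w}$ is a subsolution of $(Q^s)$, gives that $t\mapsto \underline{u}(t)$ is nondecreasing; symmetrically $t\mapsto\overline{u}(t)$ is nonincreasing. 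The same comparison applied to the pairs $(\underline{u}(t),u(t))$ and $(u(t),\overline{u}(t))$ yields the sandwich $\underline{u}(t)\leq u(t)\leq \overline{u}(t)$ for every $t$, hence the confinement $\underline{w}\leq u(t)\leq \overline{w}$ uniformly in $t\in[0,\infty)$.

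Monotone pointwise limits $\underline{u}_\infty$ and $\overline{u}_\infty$ thus exist and lie in $\mc C$. Applying the energy identity of Theorem \ref{mt2} with $h=f(x,\cdot)$ on $[0,T]$ and letting $T\to\infty$, the uniform $X_0(\Om)$ bound coming from $\mc C$ gives $\int_0^\infty \|\partial_t\underline{u}(t)\|_{L^2(\Om)}^2\,dt<\infty$, and similarly for $\overline{u}$. I would then select a sequence $t_n\to\infty$ along which $\partial_t\underline{u}(t_n)\to 0$ in $L^2(\Om)$, pass to the limit in the weak formulation of $(P^s_t)$, and identify $\underline{u}_\infty$ as a weak solution of $(Q^s)$ inside $\mc C$. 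By the uniqueness in Theorem \ref{mt4}, where the decreasing hypothesis on $y\mapsto f(x,y)/y$ enters, one concludes $\underline{u}_\infty=\overline{u}_\infty=\hat u$. Since $\hat u\in C^\alpha(\ov\Om)$ and the envelopes converge monotonically and pointwise to $\hat u$, Dini's theorem promotes the convergence to uniform convergence, and the sandwich yields $u(t)\to\hat u$ in $L^\infty(\Om)$.

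The main obstacle will be the identification step: rigorously passing to the limit in the nonlocal and the singular terms simultaneously, which is delicate because weak $X_0(\Om)$ convergence is a priori insufficient to handle $u^{-q}$. The conical sandwich is exactly what circumvents this, converting the singular term into a function uniformly bounded on compact subsets of $\Om$, while the weighted barrier $\delta^{\min(s,2s/(q+1))}$ built into the definition of $\mc C$ controls the boundary behaviour of $u^{-q}\phi$ needed to close the limit argument.
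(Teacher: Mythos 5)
Your architecture matches the paper's: sandwich $u(t)$ between the trajectories $v_1,v_2$ emanating from the stationary sub/supersolutions of $(Q^s)$ built in the proof of Theorem \ref{mt4}, show these trajectories are monotone in $t$, identify their limits with $\hat u$ by the uniqueness of Theorem \ref{mt4}, and conclude by Dini plus squeezing. You diverge in two technical places. First, you obtain time-monotonicity by comparing $\underline{u}(\cdot+h)$ with $\underline{u}(\cdot)$ via a parabolic comparison principle, whereas the paper never states such a principle: it gets both the monotonicity and the ordering $\underline{u}^k\leq u^k\leq \overline{u}^k$ at the level of the semi-discrete scheme \eqref{mt3_1n} (choosing $\De_t<1/M$ so that $t\mapsto \De_t f(x,t)+t$ is nondecreasing) and then passes to the limit; your route is legitimate but you would need to supply the parabolic comparison, which is most easily done exactly through that discretization. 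Second, you identify $\underline{u}_\infty$ as a stationary solution by an energy-dissipation argument ($\int_0^\infty\|\partial_t\underline{u}\|_{L^2}^2\,dt<\infty$, translates along $t_n\to\infty$), while the paper uses the nonlinear semigroup generated by the m-accretive operator $L$ and the fixed-point identity $S(t)\tilde v_1=\tilde v_1$. Your route avoids the accretive-operator machinery and is self-contained given the energy identity of Theorem \ref{mt3} and the cone confinement (which, as you note, tames the singular term via Hardy's inequality); the paper's route is shorter but leans on Proposition \ref{para_sing_prop2}.

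There is one genuine gap: the Dini step. Dini's theorem requires the monotone family $\underline{u}(t,\cdot)$ to consist of \emph{continuous} functions on $\overline\Om$, not only the limit $\hat u$ to be continuous (monotone pointwise convergence of possibly discontinuous functions to a continuous limit on a compact set need not be uniform). Theorem \ref{mt3} only gives $v_1,v_2\in C([0,T];X_0(\Om))$ with values in $\mc C$; continuity in $x$ at each fixed $t$ is not free. The paper closes this by proving that $\underline{u},\overline{u}\in \overline{\mc D(L)}^{L^\infty(\Om)}$ (a truncation argument on $L(\underline u)$, $L(\overline u)$ combined with the comparison principle and Dini again) and then invoking Proposition \ref{para_sing_prop2} to place $v_1,v_2$ in $C([0,T];C_0(\overline\Om))$. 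You should either reproduce that verification or otherwise establish spatial continuity (or equicontinuity) of the monotone envelopes before applying Dini; as written, the uniform convergence $v_1(t)\to\hat u$ in $L^\infty(\Om)$ does not follow.
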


\begin{Remark}
We can conclude the results for the problem $(P_t^s)$ in a similar manner when $q>-1$ and $q(2s-1)<(2s+1)$ holds.
\end{Remark}

\section{Existence of solution to $(S)$}
Basically we {prove} Theorem \ref{mt1} in this section. Before proving this, we give a Lemma that will be recalled in our work several times as {the} weak comparison principle.

\begin{Lemma}\label{weak_comp_principle}
Assume $\la>0$ and $u,v \in X_0(\Om)$ are weak solutions of
\begin{align}
&{A_\la}u= g_1 \; \text{in}\; \Om,\label{wcp-1}\\
&{A_\la}v = g_2 \; \text{in}\; \Om \label{wcp-2}
\end{align}
with $g_1,g_2 \in L^2(\Om)$ such that $g_1 \leq\; g_2$, where ${A_\la} :X_0(\Om)\cap \mc C \to (X_0(\Om))^*$ {(dual space of $X_0(\Om)$)} is defined as ${A_\la}(u):= u+\la((-\De)^su -u^{-q})$, with $\la>0$ fixed. Then $u\leq v$ a.e. in $\Om$. Moreover, for $g \in L^\infty(\Om)$ the problem
\begin{equation}\label{uniq}
A_\la u = g \; \text{in} \;\; \Om, \; u =0 \; \text{in}\;\; \mb R^n\setminus \Om
\end{equation}
has a unique solution in $X_0(\Om)$.
\end{Lemma}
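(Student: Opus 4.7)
The plan is to prove the comparison via the classical technique of testing the difference equation with $(u-v)^+$, exploiting the strict monotonicity of each of the three components of $A_\la$: the identity, the fractional Laplacian, and the singular term $-u^{-q}$. Subtracting the weak formulations of \eqref{wcp-1} and \eqref{wcp-2} and choosing $\phi = (u-v)^+ \in X_0(\Om)$ yields
\begin{align*}
\int_\Om ((u-v)^+)^2\, dx & + \la C_n^s \int_Q \frac{\bigl((u-v)(x)-(u-v)(y)\bigr)\bigl((u-v)^+(x)-(u-v)^+(y)\bigr)}{|x-y|^{n+2s}}\, dxdy \\
& - \la \int_\Om (u^{-q}-v^{-q})(u-v)^+\, dx = \int_\Om (g_1-g_2)(u-v)^+\, dx.
\end{align*}
The first term is trivially non-negative; the Gagliardo term is non-negative by the elementary pointwise inequality $(w(x)-w(y))(w^+(x)-w^+(y)) \geq 0$ applied with $w=u-v$; and the singular term is non-negative because $t \mapsto -t^{-q}$ is strictly increasing on $(0,\infty)$, so $u^{-q}-v^{-q}$ and $u-v$ carry opposite signs on $\{u>v\}$. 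Since $g_1 \leq g_2$, the right-hand side is non-positive and each of the three non-negative pieces must vanish, forcing $(u-v)^+ \equiv 0$ a.e.\ in $\Om$.

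The technical heart of the argument, and the main obstacle, is to justify that $(u-v)^+$ is an admissible test function in the presence of the genuinely singular term $u^{-q}$; this amounts to verifying $\int_\Om (u^{-q}+v^{-q})(u-v)^+\, dx < \infty$. Here both the cone condition $u,v\in \mc C$ (supplied by Theorem \ref{mt1}) and the structural hypothesis $q(2s-1)<(2s+1)$ play a decisive role. For $q>1$ one uses the pointwise bounds $u,v \geq k_1 \de^{\frac{2s}{q+1}}$ together with $(u-v)^+ \leq C\de^{\frac{2s}{q+1}}$ to control the integrand by $C\de^{\frac{2s(1-q)}{q+1}}$ near $\pa\Om$; integrability is precisely equivalent to $q(2s-1)<2s+1$. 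The cases $q\leq 1$ are analogous and easier, since $\de^s$ behavior (up to a logarithmic factor when $q=1$) provides more than enough margin.

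For the ``moreover'' part, uniqueness is immediate by applying the comparison with $g_1=g_2=g$ and swapping the roles of $u$ and $v$. For existence, I would minimize the strictly convex functional
\begin{equation*}
J(u) = \frac{1}{2}\|u\|_{L^2(\Om)}^2 + \frac{\la}{2}\|u\|_{X_0(\Om)}^2 - \frac{\la}{1-q}\int_\Om (u^+)^{1-q}\, dx - \int_\Om g u\, dx
\end{equation*}
(with a logarithmic analogue when $q=1$) over $X_0(\Om)$, first regularizing the singular term as $(u^++\e)^{-q}$, obtaining minimizers $u_\e$, and then passing to the limit $\e \downarrow 0$. The just-proven comparison principle provides monotonicity and uniform two-sided cone bounds on $u_\e$, which together with the exponent restriction allow one to pass to the limit in the singular integral and to recover a solution in $X_0(\Om)$; its uniqueness there is then guaranteed by the first part.
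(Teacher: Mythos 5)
Your proposal is correct and follows essentially the same route as the paper: test the difference of the two weak formulations with $(u-v)^+$, observe that the $L^2$ term, the Gagliardo term and the singular term are each non-negative by monotonicity while the right-hand side is non-positive, and conclude $(u-v)^+\equiv 0$; uniqueness for \eqref{uniq} then follows at once, and existence is obtained exactly as in the paper's proof of Theorem \ref{mt1} by minimizing the $\e$-regularized convex functional and passing to the limit with the help of the cone bounds. Your explicit verification that $(u-v)^+$ is an admissible test function under $q(2s-1)<2s+1$ is a welcome addition that the paper leaves implicit.
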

\begin{proof}
Let $w=(u-v)$, then $w=w^+-w^-$ where $w^+=\max\{w,0\}$ and $w^-=\max\{-w,0\}$. Let $\Om^+ := \{x\in \Om: u(x)> v(x)\}$ and $\Om^- := \Om \setminus \Om^+$, then $\Om= \Om^+\cup \Om^-$. Multiplying \eqref{wcp-1} and \eqref{wcp-2} by $w^+$, integrating over $\mb R^n$ on both sides and subtracting, we get
{
\begin{align*}
&\int_{\Om^+} (u-v)^2dx+ \la \left( C_n^s\int_Q\frac{((u-v)(x)-(u-v)(y))(w^+(x)- w^+(y))}{|x-y|^{n+2s}}dxdy\right.\\
  &\quad \quad - \left.\int_{\Om^+}\left(\frac{1}{v^q}-\frac{1}{u^q}\right)(u-v)dx\right)
= \int_{\Om^+} (g_1-g_2)w^+ dx.
\end{align*}}
{Since for $(x,y)\in \Om \times \mc C\Om$, $((u-v)(x)-(u-v)(y))(w^+(x)-w^+(y))= (u-v)(x)w^+(x)\geq 0$  and for $(x,y)\in \Om^+ \times \Om^-$, $((u-v)(x)-(u-v)(y))w^+(x)\geq 0$ we get
\begin{equation}\label{wcp3n}
\begin{split}
 \int_{\Om^+} (u-v)^2dx &+ \la \left(C_n^s \int_{\Om^+}\int_{\Om^+} \frac{((u-v)(x)-(u-v)(y))^2}{|x-y|^{n+2s}}dxdy\right.\\
  &\quad \left.- \int_{\Om^+}\left(\frac{1}{v^q}-\frac{1}{u^q}\right)(u-v)dx\right)
\leq \int_{\Om^+} (g_1-g_2)w^+ dx.
 \end{split}
\end{equation}}
{We can also prove that $A_\la$} is a strictly monotone operator (for definition refer \cite{Barbu}). So left hand side of \eqref{wcp3n} is positive whereas  $\int_{\Om^+}(g_1-g_2)w^+ dx \leq 0$. Therefore we arrive at a contradiction which implies $u\leq v$ a.e. in $\Om$.
Then the uniqueness of \eqref{uniq} follows directly.
\QED
\end{proof}

\noi \textbf{Proof of Theorem \ref{mt1}:} For $\e >0$, we consider the following approximated problem corresponding to $(S)$ as
\begin{equation*}
(S_\e)\left\{
\begin{split}
&u+ \la \left( (-\De)^su-(u+\e)^{-q}\right)=g,\;\;  \; u>0 \; \text{in} \; \Om,\\
& u=0 \; \text{in}\; \mb R^n \setminus \Om.
\end{split}
\right.
\end{equation*}
Let $X_0^+(\Om)= \{u\in X_0(\Om): \; u\geq 0\}$. The energy functional associated to $(S_\e): X_0^+(\Om) \to \mb R$ is
\[E_\la(u) = \frac12 \int_\Om u^2 ~dx + \frac{\la}{2}\|u\|_{X_0(\Om)}^2 -\frac{\la}{1-q}\int_{\Om}({u}+\e)^{1-q}~dx- \int_{\Om}gu ~dx\]
which can shown to be weakly lower semicontinuous, coercive and strictly convex in $X_0^+(\Om)$. Since $X_0(\Om)$ is reflexive and $X_0^+(\Om)$ being a closed convex subset of $X_0(\Om)$, $E_\la$ has a unique global minimizer $u_{\la,\e} \in X_0^+(\Om)$ i.e. $u_{\la,\e} \geq 0$ a.e. in $\Om$. Let $\phi_{1,s}$ denotes the normalized first eigenfunction associated with first eigenvalue $\la_{1,s}$ of $(-\De)^s$  with Dirichlet boundary condition in $\mb R^n \setminus \Om$ i.e.
\[(-\De)^s\phi_{1,s}= \la_{1,s}\phi_{1,s} \; \text{in} \; \Om, \; \; \phi_{1,s}=0 \; \text{in}\; \mb R^n\setminus \Om,\]
where $0< \phi_{1,s} \in X_0(\Om)\cap L^{\infty}(\Om)$ is normalized by $\|\phi_{1,s}\|_{L^2(\Om)}=1$, refer {[\cite{varmethod}, Proposition $9$, p. $8$]}.
Also there exists $l>0$ such that $l\delta^s(x) \leq \phi_{1,s}(x)$ for a.e. $x \in \Om$ (see \cite{Roston-serra}). Since $g \in L^\infty(\Om)$, if we choose $m>0$ (depending on $\la,q$ and $g$) small enough such that (in the weak sense)
\[m \|\phi_{1,s}\|_\infty+ \la \la_{1,s}m\|\phi_{1,s}\|_{\infty}- \frac{\la}{m^q \|\phi_{1,s}+\e\|^q_\infty}<g,\]
then $m\phi_{1,s}$ forms a strict subsolution of $(S_\e)$ (independent of $\e$) i.e.
\begin{equation}\label{sp_mt1_1}
\left\{
\begin{split}
&m\phi_{1,s}+\la \left( (-\De)^s(m\phi_{1,s})- \frac{1}{(m\phi_{1,s}+\e)^q}\right)<g \; \text{in}\; \Om,\\
&m\phi_{1,s}=0 \; \text{in}\; \mb R^n\setminus \Om.
\end{split}
\right.
\end{equation}
 We define $w_{\e}:= (m\phi_{1,s}-u_{\la,\e})^+$ with the assumption that $\text{supp}(w_{\e})$ has non zero measure and for $t>0$, $\zeta(t):= E_\la(u_{\la,\e} +tw_{\e})$, then
\begin{align*}
\zeta^\prime(t)&= \int_{\Om}(u_{\la,\e} +tw_{\e})w_{\e} + \la C_n^s\int_Q\frac{( (u_{\la,\e} +t w_{\e})(x)-(u_{\la,\e} +tw_{\e} )(y))(w_{\e}(x)-w_{\e}(y))}{|x-y|^{n+2s}}~dxdy \\
&\quad -\la\int_{\Om}\frac{w_{\e}}{(u_{\la,\e}+tw_{\e}+\e)^q}-\int_{\Om}gw_{\e}
\end{align*}
in $(0,1]$.
 Since $u_{\la,\e}$ is the minimizer of $E_\la$, $\lim\limits_{t \to 0^+}{\zeta^\prime}(t)\geq 0$. Moreover, convexity of $E_\la$ assures that the map $t \mapsto \zeta^\prime(t)$ is {non decreasing}. {This implies $0\leq \zeta^\prime(0^+) \leq \zeta^\prime(1)$. Let us recall the following inequality for any $\psi$ being a convex Lipschitz function:
 \[(-\De)^s\psi(u) \leq \psi^\prime (u)(-\De)^su.\]
 Therefore using this with $\psi(x)=\max \left\{x,0\right\}$ and \eqref{sp_mt1_1}, we get
$\zeta^\prime(1)\leq \langle E_\la^\prime(m\phi_{1,s}), w_\e \rangle <0$ which is a contradiction. Hence $\text{supp}(w_\e)$ must have measure zero which implies
\begin{equation}\label{psp1}
m\phi_{1,s}\leq u_{\la,\e}.
\end{equation}
Using \eqref{psp1}, we can show that $E_\la$ is G{\^{a}}teaux differentiable in $u_{\la,\e}$ and as a result $u_{\la,\e}$ satisfies in the sense of distributions
\begin{align*}
u_{\la,\e} +\la(-\De)^s u_{\la,\e}= \la u_{\la,\e}^{-q} +g \; \text{in}\; \Om.
\end{align*}
Using Proposition $2.9$ of \cite{Silvestre}, we get $u_{\la,\e} \in C^{1,\alpha}(\mb R^n)$ for any $\alpha < 2\sigma -1$ where $2\sigma >1$.  Also since $g \in L^\infty(\Om)$, using Proposition $1.1$ (p. $277$) of \cite{Roston-serra} we get $u_{\la,\e} \in C^s(\mb R^n)$.
 Now we claim that $u_{\la,\e}$ is monotone increasing as $\e \downarrow 0^+$. Let $0<\e_1<\e_2$, then we show that $u_{\la,\e_1}> u_{\la,\e_2}$ in $\Om$. If possible, let $x_0\in \Om$ be such that $x_0 := \text{arg} \;\min\limits_{\overline\Om} (u_{\la,\e_1}-u_{\la,\e_2})$ and $u_{\la,\e_1}(x_0)\leq u_{\la,\e_2}(x_0)$. Then
\[(u_{\la,\e_1}-u_{\la,\e_2})+ \la (-\De)^s(u_{\la,\e_1}-u_{\la,\e_2})= \la \left( \frac{1}{(u_{\la,\e_1}+\e_1)^q}- \frac{1}{(u_{\la,\e_2}+\e_2)^q}\right)\]
which implies
\begin{align}
&(u_{\la,\e_1}-u_{\la,\e_2})(x_0)+ \la C_n^s \int_{\mb R^n}\frac{(u_{\la,\e_1}-u_{\la,\e_2})(x_0)-(u_{\la,\e_1}-u_{\la,\e_2})(y)}{|x_0-y|^{n+2s}}~dy \label{q>1eqn1}\\
&= \la \left( \frac{1}{(u_{\la,\e_1}(x_0)+\e_1)^q}- \frac{1}{(u_{\la,\e_2}(x_0)+\e_2)^q}\right).\label{q>1eqn3}
\end{align}
But we can see that  \eqref{q>1eqn1} is negative whereas \eqref{q>1eqn3} is positive which gives a contradiction. Therefore $x_0 \in \partial \Om$ and $u_{\la,\e_1}> u_{\la,\e_2}$ in $\Om$. Thus we get that $u_\la := \lim\limits_{\e \downarrow 0^+} u_{\la,\e} \geq m \phi_{1,s}$.
Let $w \in X_0^+(\Om)$ solves the problem
\begin{equation}\label{sp_mt1_4}
(-\De)^s w = w^{-q} \; \text{in}\; \Om,\;\;
w =0\; \text{in}\; \mb R^n\setminus \Om.
\end{equation}
Then from the proof of  Theorem $1.1$  of \cite{aJs}, we know that $w$ satisfies
\begin{align}
&k_1\phi_{1,s} \ln^{\frac12}\left(\frac{2}{\phi_{1,s}}\right) \leq w \leq k_2 \phi_{1,s} \ln^{\frac12}\left(\frac{2}{\phi_{1,s}}\right),\; \text{if}\; q=1\label{sp_mt1_5}\\
&k_1 \phi_{1,s}^{\frac{2}{q+1}}\leq w \leq k_2 \phi_{1,s}^{\frac{2}{q+1}},\; \text{if}\; q>1 \label{sp_mt1_6}
\end{align}
where $k_1,k_2>0$ are appropriate constants. Let $\overline{u}:= M_1w \in \mc C \cap C_0(\overline{\Om})$ for $M_1>0$. Then we can choose $M_1>>1$ (independent of $\e$) large enough such that
  \begin{align*}
  \overline{u}+\la \left( (-\De)^s\overline{u} - \frac{1}{(\overline{u} +\e)^q}\right)&= M_1w + \la\left(\frac{M_1}{w^q}- \frac{1}{(M_1w+\e)^q} \right)\\
  &\geq M_1w + \la\left(\frac{1}{(M_1w)^q}- \frac{1}{(M_1w+\e)^q} \right)>g \; \text{in}\; \Om.
  \end{align*}
  Using Lemma \ref{weak_comp_principle}, we get $u_{\la,\e} \leq \overline{u}$ which implies $u_\la \leq  \overline{u} = M_1w$. Now since $m \phi_{1,s}\leq u_\la\leq M_1w$ and both $w,\phi_{1,s} = 0$ in $\mb R^n \setminus \Om$, we get $u_\la = 0$ in $\mb R^n \setminus \Om$. Also $u_\la$ solves $(S)$ in the sense of distributions. Let $\underline{u}:= M_2w\in \mc C \cap C_0(\overline{\Om})$ then $M_2>0$ can be chosen small enough so that
  \begin{align*}
  M_2^{q+1}\left( 1+ \frac{w^{q+1}}{\la}\right) &\leq 1+ \frac{g(M_2w)^q}{\la} \;\text{in}\; \Om \\
  \text{i.e.}\; \underline u+\la (-\De)^s\underline{u} &< \frac{\la}{\underline u^q}+g \;\text{in}\; \Om.
  \end{align*}
  This implies that $\underline{u}$ forms a subsolution to $(S)$. We claim that $\underline{u}\leq u_\la$ in $\Om$. If possible, let $x_0\in \Om$ be such that $x_0 := \text{arg} \;\min\limits_{\overline\Om} (u_\la-\underline{u})$ and $u_\la(x_0)\leq \underline{u}(x_0)$. Then using the fact that $u_\la$ is a solution of $(S)$ in the sense of distributions and $\underline{u}$ is a subsolution of $(S)$, we get
 \begin{equation}\label{q>1eqn2}
 \begin{split}
(u_\la-\underline{u})(x_0) &+ \la \int_{\Om}\frac{(u_\la-\underline{u})(x_0)-(u_\la-\underline{u})(y)}{|x_0-y|^{n+2s}}dy\\
&\geq (u_\la-\underline{u})(x_0) + \la(-\De)^s(u_\la-\underline{u})(x_0) \geq \la \left( \frac{1}{u_\la^q(x_0)}- \frac{1}{\underline{u}^q(x_0)}\right).
\end{split}
\end{equation}
 This gives a contradiction, since left hand side of \eqref{q>1eqn2} is negative whereas right hand side of \eqref{q>1eqn2} is positive. Therefore we obtain
\[\underline{u} \leq u_\la \leq \overline{u} \]
which implies that $u_\la \in \mc C$, using \eqref{sp_mt1_5} and \eqref{sp_mt1_6}.
We now show that $u_\la \in X_0(\Om)$ and is a weak solution to $(S)$. Since $q(2s-1)<(2s+1)$, using the behavior of $u_\la$ with respect to $\delta$ function we get that $\displaystyle \int_{\Om}u_\la^{1-q}~dx < +\infty$. Also $\displaystyle \int_{\Om}u_\la^{-q}\phi~dx < +\infty$ for any $\phi \in X_0(\Om)$ from Hardy's inequality. Therefore using $\overline{C_c^\infty}^{\|\cdot\|_{X_0(\Om)}}= X_0(\Om)$ and Lebsegue dominated convergence theorem, we get that for any $\phi \in X_0(\Om)$
\[\int_{\Om} u_\la\phi + \la C_n^s \int_Q\frac{(u_\la(x)- u_\la(y))(\phi(x)-\phi(y))}{|x-y|^{n+2s}}~dxdy  - \int_{\Om}\left( \frac{\la}{u^q_\la}+ g\right)\phi~ dx=0.\]
That is $u_\la \in X_0(\Om) \cap \mc C$ is a weak solution to $(S)$. By Lemma \ref{weak_comp_principle}, uniqueness of $u_\la$ follows. Following the proof of Theorem $1.2$ in \cite{aJs}, we get that $u \in C^\alpha(\mb R^n)$ where $\alpha = s$ if $q<1$, $\alpha=s-\e$ if $q=1$, for any $\e>0$ small enough and $\alpha = \displaystyle\frac{2s}{q+1}$ if $q>1$. This completes the proof.}
\QED
\noi To prove the next result, we follow Lemma $3.6$ and Theorem $3.7$ of \cite{bbmp}.\\

\noi \textbf{Proof of Theorem \ref{geq-case}:} Consider the following approximated problem
\begin{equation*}
(P_{\la,k})\left\{
\begin{split}
u_k + \la \left((-\De)^su -\frac{1}{\left(u+\frac{1}{k}\right)^q} \right) &= g \; \text{in}\; \Om,\\
u_k &=0 \; \text{in}\; \mb R^n \setminus \Om.
\end{split}
\right.
\end{equation*}
By minimization argument we know that the solution $u_k$ to the problem $(P_{\la,k})$ belongs to $X_0(\Om)$. By weak comparison principle we get $u_k \leq u_{k+1}$ for all $k$. From the proof of Theorem \ref{mt1} we know that $ m\phi_{1,s}$ and $\overline u =M_1w$ forms subsolution and  supersolution  of $(P_{\la,k})$ respectively independent of $k$, where $w$ solves \eqref{sp_mt1_4} and $m$ is a sufficiently small whereas $M_1$ is a sufficiently large positive constant. Therefore
\begin{equation}\label{geq-case1}
0 \leq m\phi_{1,s} \leq  u_k \leq u_{k+1} \leq \overline{u},\; \text{for all}\; k.
\end{equation}
Since $g \in L^\infty(\Om)$ so Proposition $1.1$ of \cite{Roston-serra} gives that $u_k \in L^\infty(\Om)\cap C^s(\mb R^n)$ for all $k$. Therefore if $\tilde \Om\Subset \Om$ then there exists a constant $c_{\tilde \Om}>0$ such that
\begin{equation}\label{geq-case2}
u_k \geq c_{\tilde \Om} >0 \; \text{in}\; \tilde \Om.
\end{equation}
Let $u_\la := \lim\limits_{k \to \infty}u_k$. Then $u_\la$ solves $(S)$ in the sense of distributions. From the proof of Theorem \ref{mt1} we also know that for sufficiently small $M_2>0$, $\underline u = M_2w$ satisfies
 \[\underline u+ \la((-\De)^s \underline u- \underline u^{-q}) < g\; \text{in}\; \Om. \]
 Then following the arguments in proof of Theorem \ref{mt1} (refer \eqref{q>1eqn2}) we can show that $
 \underline u \leq u_\la \leq \overline u$ which implies that $u_\la \sim d^{\frac{2s}{q+1}}(x)$.
  Now for $ b> 1$ and $\beta \geq 1$, consider the function $\phi_\beta : [0,+\infty) \to [0,+\infty)$ defined as
\begin{equation*}
\phi_\beta(r)= \left\{
\begin{split}
r^\beta, \; \text{if}\; 0\leq r<b,\\
\beta b^{\beta-1}r-(\beta-1)b^\beta, \;  \text{if}\;  r\geq b >1.
\end{split}
\right.
\end{equation*}
Then $\phi_\beta$ is a lipschitz function with lipschitz constant $\beta b^{\beta-1}$. We have $q>1$. So let
\begin{equation}\label{geq-case8}
\beta > \max\left\{1,  \left(1-\frac{1}{2s} \right)\left( \frac{q+1}{2}\right) \right\} \geq 1.
 \end{equation}
 Then if $(2\beta - 1-q)<0$ then from $u_\la \sim d^{\frac{2s}{q+1}}(x)$ and \eqref{geq-case8} we get
\begin{equation}\label{geq-case7}
\int_{\Om} \frac{\phi_\beta^\prime(u_\la) \phi_\beta(u_\la)}{u_\la^q}~dx < +\infty.
\end{equation}
Since $\phi^\prime_\beta(u)\phi_\beta(u)\leq \beta u^{2\beta-1}$ so using \eqref{geq-case7}, $u_k\uparrow u_\la$ as $k \to \infty$ and monotone convergence theorem we get that
\begin{equation}\label{geq-case4}
\int_{\Om} \frac{\phi_\beta^\prime(u_k) \phi_\beta(u_k)}{u_k^q}~dx < +\infty \; (\text{independent of } k).
\end{equation}
Also \eqref{geq-case4} holds true when $(2\beta-1-q)\geq 0$ which follows from the uniform bound of $\{u_k\}$ in $L^\infty (\Om)$. Since it holds
\[(-\De)^s \phi_\beta (u_k) \leq \phi_\beta^\prime (u_k) (-\De)^s u_k,\]
therefore using \eqref{geq-case4} we get
\begin{equation*}
\begin{split}
\int_{\mb R^n} \phi_\beta(u_k) (-\De)^s\phi_\beta(u_k) & \leq \frac{1}{\la}\int_{\Om} (g-u_k)\phi_\beta^\prime(u_k) \phi_\beta(u_k)~dx + \int_{\Om} \frac{\phi_\beta^\prime(u_k) \phi_\beta(u_k)}{u_k^q}~dx\\
 & \leq  \beta \left( \frac{\|g\|_\infty \|\overline u\|_{L^{2\beta-1}(\Om)}}{\la}+ C \right),
\end{split}
\end{equation*}
where $C>0$ is a constant independent of $k$. Passing on the limit as $b \to \infty$ we get $\{u_k^\beta\}$ is uniformly bounded in $X_0(\Om)$. By weak lower semicontinuity of norms we have
\[ \|u_\la^{\beta}\| \leq \liminf_{k \to \infty} \|u_k^{\beta}\| < +\infty  \]
which implies $u_\la^{\beta} \in X_0(\Om)$. Thus $u_\la^{\beta} \in L^{2^*_s}(\Om)$ and since $\beta2^*_s >1$ we get $u_\la \in L^1(\Om)$. Now let $\psi \in \Theta$ such that $\text{supp}(\psi)= \tilde \Om \Subset \Om$ then by Lebesgue dominated convergence theorem we get
\[\lim_{k \to \infty} \int_{\mb R^n} u_k (-\De)^s \psi~dx = \int_{\mb R^n} u_\la (-\De)^s \psi~dx < +\infty. \]
Using \eqref{geq-case2} we get
\[0 \leq \left| \left( \frac{g-u_k}{\la} + \frac{1}{\left(u_k + \frac{1}{k}\right)^q} \right)\psi \right| \leq \left( \frac{|g| + |\overline{u}|}{\la}+ \frac{1}{c_{\tilde \Om}^q} \right)|\psi| \in L^1(\Om).\]
Therefore using Lebesgue dominated convergence theorem again we obtain
\begin{equation}\label{geq-case5}
\int_{\mb R^n}u_\la (-\De)^s \psi = \lim_{k \to \infty} \int_\Om \left(\frac{g-u_k}{\la} + \frac{1}{\left(u_k + \frac{1}{k}\right)^q}\right)\psi~dx= \int_\Om \left(\frac{g-u_\la}{\la} + \frac{1}{u_\la^q}\right)\psi~dx.
\end{equation}
We claim that $u_\la \notin X_0(\Om)$. On contrary if $u_\la \in X_0(\Om)$ then using Lemma $3.1$ of \cite{TJS} and monotone convergence theorem, we can easily show that \eqref{geq-case5} holds for any $\psi \in X_0(\Om)$. Therefore $u_\la \in X_0(\Om)$ solves $(S)$ in the weak sense  and
we get
\[\frac{1}{u^q_\la}= \frac{1}{\la}(u_\la-g)+ (-\De)^su_\la \in (X_0(\Om))^*.  \]
Using \eqref{geq-case1} this implies that
\[\int_\Om \overline{u}^{1-q}~dx \leq \int_\Om u^{1-q}_\la~dx <+\infty \]
which contradicts the definition of $\overline u$.  \QED

Now following the proof of Lemma $6.1$ of \cite{TS1}, we can show that \eqref{geq-case5} holds for any $\psi \in X_0(\Om)$.

\section{Existence of solution to $(G^s_t)$ and its regularity}
We prove Theorem \ref{mt2} and Proposition \ref{para_sing_prop1} in this section. We use the method of semi-discretization in time along with implicit Euler method to prove Theorem \ref{mt2}.
\begin{Theorem}\label{mt2-prt1}
If $h(t,x) \in L^{\infty}(\Lambda_T)$, $u_0\in X_0(\Om)\cap \mc C$, $q>0$ and $q(2s-1)<(2s+1)$, then there exists a unique weak solution $u \in \mathcal{A}_{\Lambda_T}\cap \mathcal C$ of the problem $(G^s_t)$.
\end{Theorem}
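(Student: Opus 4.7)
The plan is to apply the implicit Euler semi-discretization in time. Fix $N\geq 1$, set $\tau = T/N$ and $t_k = k\tau$ for $0\leq k\leq N$, and for $1\leq k\leq N$ let $h_k(x) := \tau^{-1}\int_{t_{k-1}}^{t_k} h(t,x)\,dt \in L^\infty(\Om)$. Starting from $u_0^{(N)} := u_0$, I define $u_k$ iteratively as the unique solution in $X_0(\Om)\cap\mc C\cap C^\alpha(\mb R^n)$ of
\begin{equation*}
u_k + \tau\bigl((-\De)^s u_k - u_k^{-q}\bigr) = u_{k-1} + \tau h_k \ \text{in } \Om, \qquad u_k = 0 \ \text{in } \mb R^n\setminus\Om,
\end{equation*}
which is precisely problem $(S)$ with $\la=\tau$ and source $g = u_{k-1}+\tau h_k \in L^\infty(\Om)$, so existence and uniqueness come directly from Theorem \ref{mt1}.

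Next I need a \emph{uniform-in-$N$} cone bound and energy estimates. For the cone bound, I adapt the sub/super-solution construction from the proof of Theorem \ref{mt1}: using $u_0\in\mc C$ and $\|h\|_\infty<\infty$, I choose constants $M_1,M_2>0$ and functions $\underline u, \overline u$ built from $\phi_{1,s}$ (if $q<1$) or from a multiple of the function $w$ solving \eqref{sp_mt1_4} (if $q\geq 1$), with $\underline u \leq u_0 \leq \overline u$ and such that for every $\tau\in(0,T]$,
\begin{equation*}
\underline u + \tau\bigl((-\De)^s\underline u - \underline u^{-q}\bigr) \leq \underline u + \tau\inf h, \qquad \overline u + \tau\bigl((-\De)^s\overline u - \overline u^{-q}\bigr) \geq \overline u + \tau\|h\|_\infty.
\end{equation*}
An inductive application of Lemma \ref{weak_comp_principle} then yields $\underline u \leq u_k \leq \overline u$ for every $k$, i.e.\ the cone bound uniformly in $N$. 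For the energy estimate, I test the scheme with $u_k - u_{k-1}$, use $\langle(-\De)^s u_k, u_k - u_{k-1}\rangle_{X_0} \geq \tfrac{1}{2}(\|u_k\|_{X_0(\Om)}^2 - \|u_{k-1}\|_{X_0(\Om)}^2)$, and invoke the convexity of $u\mapsto -u^{1-q}/(1-q)$ on $(0,\infty)$ (with the standard logarithmic modification for $q=1$) to get $-u_k^{-q}(u_k - u_{k-1}) \geq -\tfrac{1}{1-q}(u_k^{1-q} - u_{k-1}^{1-q})$. Summing over $k$ and applying Young's inequality to absorb cross terms yields an $N$-independent bound
\begin{equation*}
\sum_{k=1}^N \tau\Bigl\|\tfrac{u_k - u_{k-1}}{\tau}\Bigr\|_{L^2(\Om)}^2 + \max_{m\leq N}\|u_m\|_{X_0(\Om)}^2 \leq C\bigl(\|u_0\|_{X_0(\Om)},\|h\|_\infty,\underline u,\overline u\bigr),
\end{equation*}
in which finiteness of $\int_\Om u_k^{1-q}dx$ is ensured by the cone bound together with the standing assumption $q(2s-1)<2s+1$.

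Finally, I form the piecewise affine and piecewise constant interpolants $u^{(N)},\bar u^{(N)}$ built from $\{u_k\}$. The previous bounds give $u^{(N)}\rightharpoonup u$ weak-$*$ in $L^\infty(0,T;X_0(\Om))$ and $u^{(N)}_t \rightharpoonup u_t$ in $L^2(\La_T)$, while Lemma \ref{AinC} upgrades this to strong convergence in $C([0,T];L^2(\Om))$ and, along a subsequence, a.e.\ pointwise convergence. The uniform cone bound $\underline u \leq \bar u^{(N)} \leq \overline u$ then allows passing to the limit in the singular term by dominated convergence, with $\underline u^{-q}$ pairing with any $X_0(\Om)$-test function via Hardy's inequality; thus $u\in\mc A(\La_T)\cap\mc C$ is a weak solution of $(G_t^s)$. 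Uniqueness is then an easy consequence of the monotonicity of $w\mapsto -w^{-q}$ on $(0,\infty)$: subtracting the equations for two solutions $u,v\in\mc A(\La_T)\cap\mc C$ and testing with $u-v$ yields $\tfrac{d}{dt}\|u(t)-v(t)\|_{L^2(\Om)}^2 \leq 0$, whence $u\equiv v$ since $u(0)=v(0)=u_0$. The \textbf{main obstacle} I anticipate is the uniform-in-$\tau$ construction of $\underline u,\overline u$ in the cases $q\geq 1$, where the cone profile degenerates from $\delta^s$ to $\delta^{2s/(q+1)}$ (with a logarithmic correction at $q=1$), so that the multiplicative constants must be calibrated so that neither the sub- nor the super-solution property collapses as $\tau \downarrow 0$; once this is secured, the energy estimates and the limit passage in the singular term proceed routinely.
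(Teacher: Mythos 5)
Your proposal is correct and follows essentially the same route as the paper: implicit Euler semi-discretization resolved via Theorem \ref{mt1}, uniform cone bounds from $\tau$-independent sub/supersolutions propagated by iterating Lemma \ref{weak_comp_principle}, the two discrete energy estimates with the convexity inequalities, Aubin--Lions--Simon compactness, dominated convergence plus Hardy's inequality for the singular term, and uniqueness by monotonicity. The obstacle you flag is resolved exactly as you anticipate, since the paper's sub/supersolutions are chosen so that $(-\De)^s\underline u-\underline u^{-q}\leq-\|h\|_{L^\infty(\Lambda_T)}$ and $(-\De)^s\overline u-\overline u^{-q}\geq\|h\|_{L^\infty(\Lambda_T)}$, conditions that are independent of the time step.
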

\begin{proof}
 Let $\De_t = \frac{T}{n}$ and for $0\leq k \leq n$, define $t_k:= k\De_t $. Also define
\[h_k(x):= \frac{1}{\De_t}\int_{t_{k-1}}^{t_k} h(\tau,x)d\tau \; \text{for}\; x \in \Om.\]
Since $h \in L^\infty(\Lambda_T)$, we get $h_k \in L^\infty(\Om)$ and $\|h_k\|_{\infty} \leq \|h\|_{L^\infty(\Lambda_T)}$. Then we define
\[h_{\De_t}(t,x):= h^k(x), \; \text{when}\; t \in [t_{k-1}, t_k),\; 1\leq k \leq n\]
and get that $h_{\De_t} \in L^\infty(\Lambda_T)$. For $1<p<+\infty$,
\begin{equation}\label{mt2_4}
\begin{split}
\|h_{\De_t}\|_{L^p(\Lambda_T)}\leq (|\Om|T)^{\frac{1}{p}}\|h\|_{L^\infty(\Lambda_T)}
\end{split}
\end{equation}
and $h_{\De_t} \to h$ in $L^p(\Lambda_T)$ as $\De_t \to 0$. Taking $\la = \De_t$ and $g= \De_t h_k + u^{k-1} \in L^\infty(\Om)$ in $(S)$, using Theorem \ref{mt1} we define the sequence $\{u^k\} \subset X_0(\Om)\cap \mc C$ as solutions to problem
\begin{equation}\label{mt2_1}
\left\{
\begin{split}
\frac{u^k-u^{k-1}}{\De_t} +(-\De)^su^k - \frac{1}{(u^k)^q}&=h_k \; \text{in}\; \Om,\\
u^k&=0\; \text{in}\; \mb R^n\setminus \Om,
\end{split}
\right.
\end{equation}
where $u^0 = u_0 \in X_0(\Om)\cap \mc C$. Now, for $1\leq k\leq n$, we define
\begin{equation}\label{mt2_2}
\forall t \in [t_{k-1}, t_k),\left\{
\begin{split}
u_{\De_t}(t,x)&:= u^k(x)\\
\tilde{u}_{\De_t}(t,x) &:= \frac{(u^k(x)-u^{k-1}(x))}{\De_t}(t-t_{k-1})+u^{k-1}(x).
\end{split}
\right.
\end{equation}
Then $u_{\De_t}$ and $\tilde{u}_{\De_t}$ satisfies
\begin{equation}\label{mt2_3}
\frac{\partial \tilde{u}_{\De_t}}{\partial t} +(-\De)^su_{\De_t}- \frac{1}{u_{\De_t}^q}= h_{\De_t} \in L^{\infty}(\Lambda_T).
\end{equation}
At first, we establish some a priori estimates for $u_{\De_t}$ and $\tilde{u}_{\De_t}$ independent of $\De_t$. Multiplying \eqref{mt2_1} by $\De_t u^k$, integrating over $\mb R^n$ and summing from $k=1$ to $n^\prime \leq n$, using Young's inequality and \eqref{mt2_4} we get for a constant $C>0$
\begin{equation}\label{mt2_5}
\begin{split}
&\sum_{k=1}^{n^\prime} \int_\Om(u^k-u^{k-1})u^k dx + \De_t\sum_{k=1}^{n^\prime}\left( \|u^k\|_{X_0(\Om)}^2 -\int_\Om (u^k)^{1-q}dx \right)= \De_t \sum_{k=1}^{n^\prime}\int_{\Om}h^k u^k dx\\
 &\leq \De_t \sum_{k=1}^{n^\prime}\int_\Om \frac{|h^k|^2}{2}dx + \De_t \sum_{k=1}^{n^\prime} \int_\Om\frac{|u^k|^2}{2}dx \leq \frac{T}{2}|\Om|\|h\|_{L^\infty(\Lambda_T)}^2 + \frac{C\De_t}{2} \sum_{k=1}^{n^\prime}\|u^k\|_{X_0(\Om)}^2.
\end{split}
\end{equation}
As inequality $(2.7)$ of Theorem $0.9$ in \cite{bbg}, we can estimate the first term of \eqref{mt2_5} as
\begin{equation}\label{mt2_6}
\sum_{k=1}^{n^\prime} \int_\Om (u^k-u^{k-1})u^k dx = \frac12 \sum_{k=1}^{n^\prime}\int_\Om  |u^k-u^{k-1}|^2 dx +\frac12 \int_\Om |u^{n^\prime}|^2 dx - \frac12 \int_\Om |u_0|^2dx.
\end{equation}
Let $v$ and $w$ solves \eqref{sp_mt1_4} and define
\begin{equation*}
\underline{u}= m w \; \text{and}\; \overline{u}=M w
\end{equation*}
 where {$m >0$ is small enough  and $M>0$ is large enough chosen in such a way that}
\begin{equation*}
\left\{
\begin{split}
(-\De)^s\underline{u}-\frac{1}{\underline{u}^q} &\leq -\|h\|_{L^\infty(\Lambda_T)} \; \text{in}\; \Om,\\
(-\De)^s\overline{u}-\frac{1}{\overline{u}^q} &{\geq \|h\|_{L^\infty(\Lambda_T)}} \; \text{in}\; \Om.
\end{split}
\right.
\end{equation*}
Since $u_0 \in \mc C$, we can always choose $\underline{u}$ and $\overline{u}$ such that it satisfies the above inequalities and $\underline{u} \leq u_0 \leq \overline{u} $. Applying Lemma \ref{weak_comp_principle} iteratively we get $\underline{u} \leq u^k \leq \overline{u}$ for all $k$. This implies for a.e. $(t,x) \in [0,T]\times \Om$,
\begin{equation}\label{mt2_7}
\underline{u}(x) \leq u_{\De_t}(t,x), \tilde{u}_{\De_t}(t,x)\leq \overline{u}(x)
\end{equation}
i.e. $u_{\De_t},\tilde{u}_{\De_t} \in \mc C$ uniformly. Now since $q(2s-1)<(2s+1)$ we can estimate the singular term in \eqref{mt2_5} as
\begin{equation}\label{mt2_8}
\De_t \sum_{k=1}^{n^\prime}\int_\Om (u^k)^{1-q}dx \leq\;\left\{
\begin{split}
T \int_\Om\overline{u}^{1-q}dx < +\infty\; \text{if} \; q \leq 1, \\
T \int_\Om\underline{u}^{1-q}dx < +\infty\; \text{if} \; q > 1.
\end{split}
\right.
\end{equation}
Since $u^k \in L^\infty(\Om)$ for all $k$, by the definition of $u_{\De_t}$ and $\tilde{u}_{\De_t}$ we easily get that \begin{equation}\label{mt2_13}
u_{\De_t},\tilde{u}_{\De_t} \; \text{is bounded in}\; L^\infty([0,T], L^\infty(\Om)).
\end{equation}
 We see that for $t \in [t_{k-1},t_k)$
\[\|\tilde{u}_{\De_t}(t,\cdot)\|_{X_0(\Om)}= \left\|\frac{(t-t_{k-1})}{\De_t}u^k+ \frac{(\De_t- t+t_{k-1})}{\De_t}u^{k-1}\right\|_{X_0(\Om)} \leq \|u^k\|_{X_0(\Om)}+ \|u^{k-1}\|_{X_0(\Om)}.\]
Integrating both sides of \eqref{mt2_5} over $(t_{k-1},t_k)$ and using \eqref{mt2_6}, \eqref{mt2_8} we get that \begin{equation}\label{mt2_14}
u_{\De_t},\tilde{u}_{\De_t}  \; \text{is bounded in}\; L^2([0,T], X_0(\Om)).
\end{equation}
 We now try to obtain a second energy estimate.
 Multiplying \eqref{mt2_1} by $u^k-u^{k-1}$, integrating over $\mb R^n$ and summing from $k=1$ to $n^\prime \leq n$, using Young's inequality and \eqref{mt2_4} we get
\begin{equation}\label{secnd-ener}
\begin{split}
&\De_t\sum_{k=1}^{n^\prime} \int_\Om\left(\frac{u^k-u^{k-1}}{\De_t}\right)^2 dx + {\sum_{k=1}^{n^\prime} \int_{\mb R^n} ((-\De)^su^k(x))(u^k-u^{k-1})(x)dx} -\sum_{k=1}^{n^\prime} \int_\Om \frac{(u^k-u^{k-1})}{(u^k)^{q}}dx \\
&= \De_t\sum_{k=1}^{n^\prime}\int_{\Om}\frac{h^k (u^k-u^{k-1})}{\De_t}~ dx \leq \frac{\De_t}{2} \sum_{k=1}^{n^\prime}\left(\int_\Om{|h^k|^2}dx + \int_\Om\left(\frac{u^k-u^{k-1}}{\De_t}\right)^2dx\right)\\
\end{split}
\end{equation}
which implies
\begin{equation}\label{mt2_9}
\begin{split}
&\frac{\De_t}{2}\sum_{k=1}^{n^\prime} \int_\Om\left(\frac{u^k-u^{k-1}}{\De_t}\right)^2 dx + \sum_{k=1}^{n^\prime} \int_{\mb R^n}((-\De)^su^k(x))(u^k-u^{k-1})(x)dx\\
& \quad \quad-\sum_{k=1}^{n^\prime} \int_\Om \frac{(u^k-u^{k-1})}{(u^k)^{q}}dx \leq \frac{|\Om|T}{2}\|h\|_{L^{\infty}(\Lambda_T)}^2.
\end{split}
\end{equation}
By convexity of the term $\frac{-1}{1-q}\int_\Om u^{1-q}dx $, we have
\begin{equation}\label{convexity1}
 \frac{1}{1-q} \int_\Om \left( (u^{k-1})^{1-q}-(u^k)^{1-q} \right)dx \leq -\int_\Om \frac{u^k-u^{k-1}}{(u^k)^q}~dx.
 \end{equation}
Also
\begin{equation}\label{convexity}
{\frac12 \left(\|u^k\|_{X_0(\Om)}^2-\|u^{k-1}\|_{X_0(\Om)}^2\right) \leq  \int_{\mb R^n} ((-\De)^su^k(x))(u^k-u^{k-1})(x)dx}.
\end{equation}
Therefore \eqref{mt2_9} gives
\begin{equation}\label{mt2_10}
\begin{split}
\frac{\De_t}{2}\sum_{k=1}^{n^\prime}& \int_\Om\left(\frac{u^k-u^{k-1}}{\De_t}\right)^2 dx + \frac12 \left(\|u^{n^\prime}\|_{X_0(\Om)}^2-\|u_0\|_{X_0(\Om)}^2\right)\\
 &\quad + \frac{1}{1-q} \int_\Om \left( (u_0)^{1-q}-(u^{n^\prime})^{1-q} \right)dx \leq \frac{|\Om|T}{2}\|h\|_{L^{\infty}(\Lambda_T)}^2.
\end{split}
\end{equation}
Integrating over $(t_{k-1},t_k)$ on both sides of \eqref{mt2_10} and using \eqref{mt2_8}, we get
\[\frac{\De_t}{2} \int_{\Lambda_T}\left|\frac{\partial \tilde{u}_{\De_t}}{\partial t}\right|^2~dxdt < +\infty\]
which implies
\begin{equation}\label{mt2_11}
\frac{\partial \tilde{u}_{\De_t}}{\partial t} \; \text{is bounded in}\; L^2(\Lambda_T)\; \text{uniformly in}\; \De_t.
\end{equation}
Using definition of $u_{\De_t}$ and $\tilde {u}_{\De_t}$, we have that
\begin{equation}\label{mt2_12}
{u_{\De_t} \; \text{and}\; \tilde{u}_{\De_t} \; \text{are bounded in}\; L^\infty([0,T]; X_0(\Om))\; \text{uniformly in}\; \De_t.}
\end{equation}
Moreover, there exists a constant {$C>0$ (independent of $\De_t$)} such that
\begin{equation}\label{mt2_15}
\|u_{\De_t}-\tilde{u}_{\De_t}\|_{L^\infty([0,T];L^2(\Om))} \leq \max_{1\leq k\leq n}\|u^k - u^{k-1}\|_{L^2(\Om)} \leq C(\De_t)^{\frac12}.
\end{equation}
Using \eqref{mt2_13} and \eqref{mt2_12}, we get
\[u_{\De_t} \; \text{and}\; \tilde{u}_{\De_t} \; \text{are bounded in}\; L^\infty([0,T]; X_0(\Om)\cap L^\infty(\Om))\; \text{uniformly in}\; \De_t.\]
{ Using $\text{weak}^*$ and weak compactness results}, we say that as $\De_t \to 0^+$(i.e. $n \to \infty$), upto a subsequence
\begin{equation}\label{mt2_21}
\begin{split}
\tilde{u}_{\De_t} \xrightharpoonup{\text{*}} u  ,\;\;
u_{\De_t} \xrightharpoonup{\text{*}} v\; \text{in}\; L^\infty([0,T]; X_0(\Om)\cap L^\infty(\Om))\;\;
\text{and}\;\frac{\partial \tilde{u}_{\De_t}}{\partial t} \rightharpoonup \frac{\partial u}{\partial t} \; \text{in}\; L^2(\Lambda_T)
\end{split}
\end{equation}
where $u,v \in  L^\infty([0,T]; X_0(\Om)\cap L^\infty(\Om))$ such that $\frac{\partial u}{\partial t} \in L^2(\Lambda_T)$. From \eqref{mt2_15}, we confer that $u \equiv v$. Also from \eqref{mt2_7}, we get that $\underline{u} \leq u \leq \overline{u}$. Thus, $u \in \mc A(\Lambda_T)\cap \mc C$.

Now we will prove that $u$ is a weak solution to $(G^s_t)$. First we see that for a.e. $ x \in \Om$, $\tilde{u}_{\De_t}(\cdot,x) \in C([0,T])$. By \eqref{mt2_11}, we get that $\frac{\partial \tilde{u}_{\De_t}}{\partial t}$ is bounded in {$L^2(\Lambda_T)$ uniformly in $\De_t$}.
{Also, $\{\tilde{u}_{\De_t}\}$ is a bounded family in $X_0(\Om)$ and the embedding of $X_0(\Om)$ into $L^2(\Om)$ is compact. If we define
  \[W:= \left\{u \in C([0,T];X_0(\Om)): \; \frac{\partial u}{\partial t}\in L^2(\Lambda_T)\right\},\]
  then by Aubin-Lions-Simon Lemma, the embedding $W$ into $C([0,T];L^2(\Om))$ is compact. Therefore, we get that $\{\tilde{u}_{\De_t}\}$ is compact in $C([0,T];L^2(\Om))$.} Using $\underline{u} \leq \tilde{u}_{\De_t}\leq \overline{u}$ again, we get that $\{\tilde{u}_{\De_t}\}$ is compact in $C([0,T];L^p(\Om))$, $1<p<\infty$ and therefore as $\De_t \to 0^+$, upto a subsequence
\begin{equation}\label{mt2_16}
\tilde{u}_{\De_t} \to u \; \text{in}\; C([0,T];{L^2(\Om)}).
\end{equation}
This along with \eqref{mt2_15} gives that as $\De_t \to 0^+$
\begin{equation}\label{mt2_17}
u_{\De_t} \to u \; \text{in}\; L^\infty([0,T]; {L^2(\Om)}).
\end{equation}
Using $(u_{\De_t}-u)$ as the test function in \eqref{mt2_3},  we get
\[\int_0^T \int_{\mb R^n}\left(\frac{\partial \tilde{u}_{\De_t}}{\partial t}+(-\De)^s u_{\De_t} - u_{\De_t}^{-q}\right)(u_{\De_t}-u)dxdt= \int_{\Lambda_T}h_{\De_t}(u_{\De_t}-u)dxdt.\]
Also using \eqref{mt2_17}, we know that $\int_{\Lambda_T}\frac{\partial u}{\partial t}(\tilde{u}_{\De_t}-u)dxdt \to 0$ as $\De_t \to 0^+$. Hence
\begin{equation}\label{mt2_18}
\begin{split}
&\int_{\Lambda_T}\left(\frac{\partial \tilde{u}_{\De_t}}{\partial t}-\frac{\partial u}{\partial t}\right)(\tilde{u}_{\De_t}-u)dxdt-\int_{\Lambda_T} u_{\De_t}^{-q}(u_{\De_t}-u)dxdt\\
&\quad {+\int_0^T \langle(-\De)^s u_{\De_t},(u_{\De_t}-u)\rangle dt= \int_{\Lambda_T}h_{\De_t}(u_{\De_t}-u)dxdt+ o_{\De_t}(1).}
\end{split}
\end{equation}
{By \eqref{mt2_7}, we have $ u_{\De_t}^{-q} \leq \underline{u}^{-q}$. Also since $\underline{u}\leq u \leq \overline{u}$, we apply Lebesgue Dominated convergence theorem with \eqref{mt2_17} to get}
\begin{equation*}
\int_0^T \int_\Om u_{\De_t}^{-q}(u_{\De_t}-u)dxdt \leq \int_0^T\int_\Om \underline{u}^{-q}(u_{\De_t}-u)dxdt = o_{\De_t}(1).
\end{equation*}
{ Similarly using \eqref{mt2_4} and \eqref{mt2_17} along with Lebesgue theorem, we get}
\begin{equation*}
\int_{\Lambda_T}h_{\De_t}(u_{\De_t}-u)dxdt= o_{\De_t}(1).
\end{equation*}
Using integration by parts and the fact that $\tilde{u}_{\De_t}(0,x)= u(0,x)= u_0$ in a.e. $\Om$,
we get
\[2\int_{\Lambda_T}\left(\frac{\partial \tilde{u}_{\De_t}}{\partial t}-\frac{\partial u}{\partial t}\right)(\tilde{u}_{\De_t}-u)dxdt= \int_\Om (\tilde{u}_{\De_t}-u)^2(T)dt. \]
Therefore, \eqref{mt2_18} implies
\begin{equation*}
\frac12\int_\Om (\tilde{u}_{\De_t}-u)^2(T)dt {+ \int_0^T\langle(-\De)^s u_{\De_t}- (-\De)^s u,u_{\De_t}-u\rangle dt}= o_{\De_t}(1)
\end{equation*}
where we used the fact that $\int_0^T \langle (-\De)^su, u_{\De_t}-u\rangle dt = o_{\De_t}(1)$ which follows from \eqref{mt2_17}. Since $u \not\equiv 0$ identically in $\Lambda_T$, using \eqref{mt2_17} we get
\[\int_0^T \|(u_{\De_t}-u)(t,\cdot)\|_{X_0(\Om)}^2dt= o_{\De_t}(1).\]
{Let $(X_0(\Om))^*$ denotes the dual of $X_0(\Om)$. Then the above equations suggest that as $\Delta_t \to 0$}
\begin{equation}\label{mt2_19}
(-\De)^su_{\De_t} \to (-\De)^su \; \text{in}\; L^2([0,T];(X_0(\Om))^*).
\end{equation}
From \eqref{mt2_7}, for any $\phi \in X_0(\Om)$, using Hardy's inequality and $q(2s-1)<(2s+1)$ we have
\begin{equation*}
\int_\Om | \phi (u_{\De_t})^{-q} |dx \leq \int_{\Om}|\phi||\underline{u}^{-q}|dx \leq \left(\int_\Om\frac{1}{\delta^{2s(q-1)/(q+1)}(x)}dx \right)^{\frac12}\left(\int_\Om \frac{\phi^2}{\delta^{2s}(x)}dx \right)^{\frac12}<+\infty.
\end{equation*}
Therefore using Lebesgue Dominated convergence theorem we get
\begin{equation}\label{mt2_20}
\frac{1}{(u_{\De_t})^q} \to \frac{1}{u^q} \; \text{in}\; L^\infty([0,T];(X_0(\Om))^*) \; \text{as}\; \De_t \to 0^+.
\end{equation}
Finally, we get $u \in \mc A(\Lambda_T)$ and for any $\phi \in \mc A(\Lambda_T)$ passing on the limit $\De_t \to 0^+$ in
\begin{equation*}
\begin{split}
&\int_{\Lambda_T}\frac{\partial \tilde{u}_{\De_t}}{\partial t}\phi~dxdt+ \int_0^T \int_{\mb R^n} (-\De)^su_{\De_t}\phi ~dxdt-\int_{\Lambda_T} \frac{1}{u_{\De_t}^{q}}\phi ~dxdt= \int_{\Lambda_T}h_{\De_t}\phi~dxdt,
\end{split}
\end{equation*}
using \eqref{mt2_4}, \eqref{mt2_21}, \eqref{mt2_19} and \eqref{mt2_20}, we get
\begin{equation}\label{mt2_23}
\begin{split}
&\int_{\Lambda_T}\frac{\partial u}{\partial t}\phi~dxdt+ \int_0^T \int_{\mb R^n}(-\De)^su\phi ~dxdt-\int_{\Lambda_T} \frac{1}{{u}^{q}}\phi ~dxdt= \int_{\Lambda_T}h\phi~dxdt.
\end{split}
\end{equation}
That is, $u$ is a weak solution to $(G_t^s)$.

Now we show the uniqueness of $u$ as solution of $(G^s_t)$ such that $u(t,\cdot) \in \mc C$, for all $t \in [0,T]$. On contrary, let $v$ such that $v(t,\cdot) \in \mc C$, for all $t \in [0,T]$ distinct from $u$ be another weak solution to $(G^s_t)$. Then for any $t \in [0,T]$, we have
\begin{equation*}
\begin{split}
\int_\Om \frac{\partial (u-v)}{\partial t}(u-v)(t,x)~dx &+ \int_{\mb R^n}((-\De)^s(u-v))(u-v)(t,x)~dx\\
& \quad - \int_\Om \left( \frac{1}{u^{q}}- \frac{1}{v^q}\right)(u-v)dx=0
\end{split}
\end{equation*}
which implies
\begin{equation*}
\begin{split}
\frac{\partial}{\partial t}\left( \int_\Om \frac12 (u-v)^2(t,x)~dx\right)= -\|(u-v)(t,\cdot)\|_{X_0(\Om)}^2 + \int_\Om \left( \frac{1}{u^{q}}- \frac{1}{v^q}\right)(u-v)(t,x)dx\leq 0.
\end{split}
\end{equation*}
So we see that the function $E:[0,T]\to \mb R$ defined as
\[E(t):= \frac12 \int_\Om (u-v)^2(t,x)~dx\]
is a decreasing function. Then since $u,v$ are distinct, we get
$0 < E(t) \leq E(0) =0$
which implies $E(t)=0$, for all $t\in [0,T]$. Hence $u \equiv v$.\QED
\end{proof}

\begin{Theorem}\label{mt2-prt2}
The unique weak solution $u$ of $(G^s_t)$ (as obtained in Theorem \ref{mt2-prt1}) belongs to
$ C([0,T]; X_0(\Om))$ and ${u(t)} \in \mc C$ uniformly for each $t \in [0,T]$. Also, $u$ satisfies \eqref{mt2_eq}.
\end{Theorem}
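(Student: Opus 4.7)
I plan to push the semi-discretisation of Theorem \ref{mt2-prt1} further to extract the three assertions of Theorem \ref{mt2-prt2}.

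The uniform cone containment is almost immediate: the discrete comparison \eqref{mt2_7} asserts $\underline u\le u_{\Delta_t},\tilde u_{\Delta_t}\le\overline u$ uniformly in $\Delta_t$ and $t$, and passing to a subsequence along which \eqref{mt2_17} yields a.e.\ convergence, the bound transfers to the limit, giving $u(t)\in\mc C$ for every $t\in[0,T]$ since $\underline u,\overline u\in\mc C$ by construction.

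For the energy identity \eqref{mt2_eq}, I would revisit \eqref{secnd-ener}--\eqref{mt2_9}, replace the convexity inequalities by the exact polarisation identity $2\langle(-\Delta)^s u^k,u^k-u^{k-1}\rangle=\|u^k\|_{X_0(\Om)}^2-\|u^{k-1}\|_{X_0(\Om)}^2+\|u^k-u^{k-1}\|_{X_0(\Om)}^2$, and use the two-sided tangent sandwich $(u^k)^{-q}(u^k-u^{k-1})\le\frac{(u^k)^{1-q}-(u^{k-1})^{1-q}}{1-q}\le(u^{k-1})^{-q}(u^k-u^{k-1})$, coming from the concavity of $y\mapsto y^{1-q}/(1-q)$ on $(0,\infty)$. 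Sending $\Delta_t\to 0^+$, the weak convergence $\tilde u_{\Delta_t,t}\rightharpoonup u_t$ in $L^2(\Lambda_T)$, the convergences \eqref{mt2_17} and \eqref{mt2_19}, and dominated convergence for the singular term (applicable via the cone bound and the integrability $\underline u^{1-q}\in L^1(\Om)$, where the exponent condition $q(2s-1)<2s+1$ is used) transfer the discrete identity to \eqref{mt2_eq}.

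Once \eqref{mt2_eq} is in hand, continuity in $X_0(\Om)$ comes easily: the right-hand side, rearranged to express $\tfrac12\|u(t)\|_{X_0(\Om)}^2$, is continuous in $t$ (the two time integrals are absolutely continuous, and $t\mapsto\int_\Om u(t)^{1-q}dx$ is continuous by dominated convergence using Lemma \ref{AinC} and $\underline u\le u\le\overline u$); weak $X_0$-continuity is automatic because $\{u(t)\}$ is bounded in $X_0(\Om)$ and $u\in C([0,T];L^2(\Om))$, which forces any weakly convergent subsequence in $X_0(\Om)$ to have the $L^2$-limit; then norm plus weak convergence in the Hilbert space $X_0(\Om)$ implies strong convergence, giving $u\in C([0,T];X_0(\Om))$. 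The principal obstacle is the limit passage in the energy equation: weak lower-semicontinuity yields only the $\le$ direction of \eqref{mt2_eq}, and obtaining the matching $\ge$ requires controlling the gap $\sum_k\int[(u^{k-1})^{-q}-(u^k)^{-q}](u^k-u^{k-1})$ in the tangent sandwich, estimable via the mean-value theorem, Hardy's inequality, and the $L^2$-bound $\sum_k\|u^k-u^{k-1}\|_{L^2(\Om)}^2=O(\Delta_t)$ ---or, more elegantly, by recognising $u_t+\partial J(u)\ni h$ with $J(v)=\tfrac12\|v\|_{X_0(\Om)}^2-\tfrac{1}{1-q}\int v^{1-q}dx$ as a gradient flow for the convex functional $J$ on the cone and invoking Brezis' chain rule for convex lower-semicontinuous functionals.
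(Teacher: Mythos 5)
Your overall architecture is right and you have correctly isolated the crux --- obtaining the ``$\ge$'' half of \eqref{mt2_eq} --- but the primary mechanism you propose for closing it does not work. Writing the exact polarisation identity produces the extra nonnegative term $\tfrac12\sum_k\|u^k-u^{k-1}\|_{X_0(\Om)}^2$, and your tangent sandwich produces the gap $\sum_k\int_\Om\bigl[(u^{k-1})^{-q}-(u^k)^{-q}\bigr](u^k-u^{k-1})\,dx = q\sum_k\int_\Om\xi_k^{-q-1}(u^k-u^{k-1})^2\,dx$ with $\underline u\le\xi_k\le\overline u$. Since $\underline u\sim\delta^{2s/(q+1)}$ (for $q>1$), the weight $\xi_k^{-q-1}$ behaves like $\delta^{-2s}$, and Hardy's inequality controls $\int_\Om\delta^{-2s}w^2\,dx$ by $\|w\|_{X_0(\Om)}^2$, \emph{not} by $\|w\|_{L^2(\Om)}^2$. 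The only smallness available from the second energy estimate is $\sum_k\|u^k-u^{k-1}\|_{L^2(\Om)}^2=O(\De_t)$; there is no corresponding $o(1)$ bound on $\sum_k\|u^k-u^{k-1}\|_{X_0(\Om)}^2$, so neither the polarisation remainder nor the singular-term gap is shown to vanish (an interpolation between the $L^2$ bound and the bounded $X_0$ increments also fails for every $q>0$). So the discrete identity, passed to the limit, still only yields the inequality \eqref{mt2_22}, exactly as in the paper.

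Your fallback --- viewing the equation as $u_t+\partial J(u)\ni h$ for the convex functional $J(v)=\tfrac12\|v\|_{X_0(\Om)}^2-\tfrac1{1-q}\int_\Om v^{1-q}dx$ and invoking the chain rule for subgradients --- is the right idea, and it is essentially what the paper implements by hand, in two steps. First, from the one-sided discrete inequality \eqref{mt2_22} the paper deduces $\limsup_{t_1\to t_0^+}\|u(t_1,\cdot)\|_{X_0(\Om)}\le\|u(t_0,\cdot)\|_{X_0(\Om)}$, which together with weak continuity gives \emph{right} continuity of $u$ in $X_0(\Om)$. Second, it tests the continuous-time equation with the Steklov difference quotient $\sigma_r(u)=\frac{u(\cdot+r)-u(\cdot)}{r}$ over $(t_0,t_1)$, uses the convexity inequalities \eqref{convexity1}--\eqref{convexity} in the opposite direction, and passes $r\to0^+$ using the right continuity just established; this yields the reverse inequality \eqref{mt2_25}, hence equality. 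If you go the abstract route you must still verify that $h-u_t\in L^2(\Lambda_T)$ is a genuine $L^2(\Om)$-subgradient selection of $J$ at $u(t)$ (the singular term makes the domain and lower semicontinuity of $J$ nontrivial), which amounts to the same Steklov-difference computation. Your cone-containment argument and the final deduction of $C([0,T];X_0(\Om))$ from norm continuity plus weak continuity are correct and coincide with the paper's.
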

\begin{proof}
We first show that $u \in C([0,T]; X_0(\Om))$ and then establish \eqref{mt2_eq} in order to complete the proof of this theorem. From \eqref{mt2_21}, we already have $u \in C([0,T];L^2(\Om))$ which implies that the map $\tilde{u}:[0,T]\to X_0(\Om)$ defined as $[\tilde{u}(t)](x):= u(t,x)$ is weakly continuous.  Also \eqref{mt2_16} gives $u \in L^\infty([0,T];X_0(\Om))$, which implies $\tilde{u}(t) \in X_0(\Om)$ and $\|\tilde{u}(t)\|_{X_0(\Om)}\leq \lim\inf\limits_{t\to t_0} \|\tilde{u}(t)\|_{X_0(\Om)}$ for all $t_0 \in [0,T]$. Multiplying \eqref{mt2_1} by $u^k-u^{k-1}$, integrating over $\mb R^n$ on both sides and summing from $k=n^{\prime\prime}$ to $n^{\prime}$ ($n^\prime$ has been considered in \eqref{secnd-ener}) and using \eqref{convexity}  we get
\begin{equation*}
\begin{split}
\frac{\De_t}{2}\sum_{k=n^{\prime \prime}}^{n^\prime}& \int_\Om\left(\frac{u^k-u^{k-1}}{\De_t}\right)^2 dx + \frac12 \left(\|u^{n^\prime}\|_{X_0(\Om)}^2-\|u^{n^{\prime \prime} -1}\|_{X_0(\Om)}^2\right)\\
 &\quad + \frac{1}{1-q} \int_\Om \left( \left(u^{n^{\prime \prime}-1}\right)^{1-q}-\left(u^{n^\prime}\right)^{1-q} \right)dx \leq \sum_{k=n^{\prime \prime}}^{n^\prime} \int_\Om h_{\De_t}(u^k-u^{k-1})dx.
\end{split}
\end{equation*}
For any $t_1 \in [t_0,T]$, we take $n^{\prime\prime}$ and $n^{\prime}$ such that $n^{\prime\prime}\De_t \to t_1$ and $n^\prime \De_t\to t_0$ as $\De_t\to 0^+$. Using \eqref{mt2_4}, \eqref{mt2_15}, \eqref{mt2_17} and \eqref{mt2_20}, from above inequality we get
\begin{equation}\label{mt2_22}
\begin{split}
& \int_{t_0}^{t_1} \int_\Om \left( \frac{\partial u}{\partial t}\right)^2 dxdt+ \frac{1}{2}\|u(t_1,\cdot)\|_{X_0(\Om)}^2-\frac{1}{1-q}\int_\Om u(t_1)^{1-q}dx\\
&\quad \leq \int_{t_0}^{t_1} \int_\Om h\frac{\partial u}{\partial t}~ dxdt+ \frac{1}{2}\|u(t_0,\cdot)\|_{X_0(\Om)}^2-\frac{1}{1-q}\int_\Om u(t_0)^{1-q}dx.
\end{split}
\end{equation}
Since $u \in L^\infty([0,T];L^p(\Om))$ for $1<p<\infty $, passing on the limit $t_1 \to t_0^+$, we get
\[\lim\sup\limits_{t_1\to t_0^+}\|u(t_1,\cdot)\|_{X_0(\Om)}\leq \|u(t_0,\cdot)\|_{X_0(\Om)}.\]
Therefore $\lim\limits_{t_\to t_0^+}\|u(t,\cdot)\|_{X_0(\Om)}= \|u(t_0,\cdot)\|_{X_0(\Om)}$ which implies that $u$ is right continuous on $[0,T]$. Let us now prove the left continuity and assume $t_1>t_0$. Let $0< r \leq t_1-t_0$. Define
\[\sigma_r(z):= \frac{u(z+r)-u(r)}{r}.\]
Since $u$ is a weak solution to $(G^s_t)$, taking $\sigma_r(u)$ as the test function in $(G^s_t)$, integrating over $(t_0,t_1)\times \mb R^n$ and using \eqref{convexity1} we get
\begin{equation*}
\begin{split}
&\int_{t_0}^{t_1}\int_\Om \frac{\partial u}{\partial t}\sigma_r(u)~dxdt+ \frac{1}{2r}\int_{t_0}^{t_1}\int_{\mb R^n}((-\De)^su(t+r,x)- (-\De)^s u(t,x))dxdt\\
&\quad - \frac{1}{r(1-q)}\int_{t_0}^{t_1}\int_\Om (u^{1-q}(t+r,x)-u^{1-q}(t,x))dxdt\geq \int_{t_0}^{t_1}\int_\Om\sigma_r(u)dxdt.
\end{split}
\end{equation*}
 Then it is an easy task to get
\begin{equation}\label{mt2_24}
\begin{split}
&\int_{t_0}^{t_1}\int_\Om \frac{\partial u}{\partial t}\sigma_r(u)~dxdt+ \frac{1}{2r}\left(\int_{t_1}^{t_1+r}\int_{\mb R^n}(-\De)^su(t,x)dxdt- \int_{t_0}^{t_0+r}\int_{\mb R^n}(-\De)^s u(t,x)dxdt\right)\\
&\quad - \frac{1}{r(1-q)}\left(\int_{t_1}^{t_1+r}\int_\Om u^{1-q}(t,x)dxdt-\int_{t_0}^{t_0+r}\int_\Om u^{1-q}(t,x)dxdt\right)\geq \int_{t_0}^{t_1}\int_\Om\sigma_r(u)dxdt{.}
\end{split}
\end{equation}
Since $u$ is right continuous in $X_0(\Om)$, using Lebesgue Dominated Convergence theorem we get the following as $r \to 0^+$:
\begin{equation*}
\begin{split}
\frac{1}{r} \int_{t_1}^{t_1+r}\int_{\mb R^n}(-\De)^su(t,x)dxdt &\to \int_{\mb R^n}(-\De)^su(t_1,x)dx,\\
\frac{1}{r} \int_{t_0}^{t_0+r}\int_{\mb R^n}(-\De)^su(t,x)dxdt &\to \int_{\mb R^n}(-\De)^su(t_0,x)dx,\\
\frac{1}{r} \int_{t_1}^{t_1+r}\int_\Om u^{1-q}(t,x)dxdt & \to \int_\Om u^{1-q}(t_1,x)dxdt,\\
\frac{1}{r} \int_{t_0}^{t_0+r}\int_\Om u^{1-q}(t,x)dxdt & \to \int_\Om u^{1-q}(t_0,x)dxdt.
\end{split}
\end{equation*}
Using these estimates in \eqref{mt2_24}, as $r \to 0^+$ we get
\begin{equation}\label{mt2_25}
\begin{split}
& \int_{t_0}^{t_1} \int_\Om \left( \frac{\partial u}{\partial t}\right)^2 dxdt+ \frac{1}{2}\|u(t_1,\cdot)\|_{X_0(\Om)}^2-\frac{1}{1-q}\int_\Om u(t_1)^{1-q}dx\\
&\quad \geq \int_{t_0}^{t_1} \int_\Om h\frac{\partial u}{\partial t}~ dxdt+ \frac{1}{2}\|u(t_0,\cdot)\|_{X_0(\Om)}^2-\frac{1}{1-q}\int_\Om u(t_0)^{1-q}dx.
\end{split}
\end{equation}
The inequality \eqref{mt2_25} along with \eqref{mt2_22} gives the equality. Since the map $t \mapsto \int_\Om u^{1-q}(t,x)dt$ is continuous, $u \in C([0,T]; X_0(\Om))$. Also, \eqref{mt2_eq} is obtained by taking $t_1=t \in [0,T]$ and $t_0=0$.\QED
\end{proof}

\noi \textbf{Proof of Theorem \ref{mt2}:} The proof follows from Theorem \ref{mt2-prt1} and Theorem \ref{mt2-prt2}.\QED

Next, we present the proof of Proposition \ref{para_sing_prop1} and end this section. Through this Proposition, the solution obtained above for $(G^s_t)$ can be proved to belong in $C([0,T]; C_0(\overline{\Om}))$ if the initial function $u_0 \in \overline{\mc D(L)}^{L^\infty}$.
in Section 2.\\

\noi \textbf{Proof of Proposition \ref{para_sing_prop1}: } Let $u_0 \in \overline{\mc D(L)}^{L^\infty}$.
Let $\la>0$ and $f_1,f_2 \in L^\infty(\Om)$. Let $u,v \in X_0(\Om)\cap \mc C\cap C_0(\overline{\Om})$ be the unique solution to
\begin{equation}\label{prop1_1}
\left\{
\begin{split}
u+\la L(u) &= f_1 \; \text{in}\; \Om,\\
v+\la L(v) &= f_2 \; \text{in}\; \Om,
\end{split}
\right.
\end{equation}
as obtained using Theorem \ref{mt1}. Then obviously, $u,v \in \mc D(L)$. We define $w:= (u-v-\|f_1-f_2\|_{\infty})^+$ and taking $w$ as test function, from \eqref{prop1_1} we get
\begin{equation}\label{prop1_2}
\int_\Om w^2dx +\la \int_\Om (L(u)-L(v))w~dx \leq 0.
\end{equation}
It is easy to compute that $\displaystyle\int_\Om (L(u)-L(v))w~dx \geq 0$.
So if $\text{supp}(w)$ has nonzero measure, then
\begin{equation*}
\int_\Om w^2dx +\la \int_\Om (L(u)-L(v))w~dx > 0
\end{equation*}
which contradicts \eqref{prop1_2}. Therefore $(u-v)\leq \|f_1 -f_2\|_\infty$ and if we reverse the roles of $u$ and $v$ then we get $\|u-v\|_\infty \leq \|f_1 -f_2\|_\infty$. This proves that $L$ is m-accretive in $L^\infty(\Om)$.
 Let $\tilde w \in \mc D(L)$ and $a,b \in L^\infty(\Lambda_T)$. Then further proof of Proposition \ref{para_sing_prop1} can be obtained using Chapter $4$, Theorem $4.2$ and Theorem $4.4$ of \cite{Barbu} or following proof of Proposition $0.1$ of \cite{bbg}.\QED

\section{Existence of unique solution to $(Q^s)$}
 We give the proof of Theorem \ref{mt4} in this section. Before doing that, we prove a weak comparison principle which is needed to prove Theorem \ref{mt4}. We recall the following discrete Picone identity which will be required to prove the weak comparison principle.
\begin{Lemma}(Lemma $6.2$, \cite{Pi-id})\label{Picone-id}
Let $p\in (1,+\infty)$. For $u,v : \Om \subset \mb R^n \to \mb R$ such that $u\geq0$, $v> 0$, we have
\[M(u,v)\geq 0\; \text{in}\; \mb R^n \times \mb R^n,\]
where $\ds M(u,v)= |u(x)-u(y)|^p-|v(x)-v(y)|^{p-2}(v(x)-v(y))\left(\frac{u(x)^p}{v(x)^{p-1}}- \frac{u(y)^p}{v(y)^{p-1}} \right).$\\
The equality holds in $\Om$ if and only if $u=kv$ a.e. in $\Om$, for some constant $k$.
\end{Lemma}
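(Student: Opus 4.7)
The statement is a pointwise algebraic inequality, so the plan is to reduce it to a purely four-variable inequality and prove it using convexity plus Young's inequality. The strategy is as follows.

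\textbf{Reduction to pointwise form.} Fix arbitrary $x, y$ and set $a = u(x), b = u(y), c = v(x), d = v(y)$ with $a, b \geq 0$ and $c, d > 0$. The inequality $M(u, v) \geq 0$ then reads
\[
|a - b|^p \geq |c - d|^{p-2}(c - d)\left(\frac{a^p}{c^{p-1}} - \frac{b^p}{d^{p-1}}\right).
\]
Without loss of generality I would assume $c \geq d$ (otherwise swap the roles of $x$ and $y$, which changes signs consistently on both sides), so $|c - d|^{p-2}(c - d) = (c - d)^{p-1}$. Introduce the ratios $\alpha = a/c$, $\beta = b/d$, so that $a = \alpha c$, $b = \beta d$, and the inequality to establish becomes
\[
|\alpha c - \beta d|^p \geq (c - d)^{p-1}\bigl(\alpha^p c - \beta^p d\bigr).
\]

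\textbf{Main inequality via tangent line plus Young.} The function $t \mapsto |t|^p$ is convex on $\mathbb{R}$ for $p > 1$, and its tangent-line inequality at the point $B := \alpha(c - d) \geq 0$ evaluated at $A := a - b$ gives
\[
|a - b|^p \geq \alpha^p (c - d)^p + p\,\alpha^{p-1}(c - d)^{p-1}\bigl((a - b) - \alpha(c - d)\bigr).
\]
Since $a - b - \alpha(c - d) = \alpha c - \beta d - \alpha c + \alpha d = d(\alpha - \beta)$, dividing by $(c - d)^{p-1}$ reduces the desired inequality to
\[
\alpha^p(c - d) + p\,\alpha^{p-1} d (\alpha - \beta) \geq \alpha^p c - \beta^p d,
\]
which after simplification is exactly
\[
(p - 1)\alpha^p + \beta^p \geq p\,\alpha^{p-1}\beta.
\]
This is the standard Young inequality applied to the pair $\alpha^{p-1}$ and $\beta$ with conjugate exponents $p/(p-1)$ and $p$, so it holds. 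The edge case $\alpha = 0$ (i.e.\ $a = 0$) is trivial because the right-hand side is then nonpositive while the left-hand side is nonnegative, and the case $c = d$ is also immediate.

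\textbf{Equality case.} Tracing the argument, the only nontrivial inequality used is Young, where equality requires $\alpha^p = \beta^p$, i.e.\ $\alpha = \beta$ since both are nonnegative. Translating back, this means $u(x)/v(x) = u(y)/v(y)$ for a.e.\ $x, y \in \Omega$, i.e.\ $u = k v$ a.e.\ in $\Omega$ for some constant $k \geq 0$.

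\textbf{Expected difficulty.} The argument is essentially algebraic and none of the steps are delicate; the only mild obstacle is bookkeeping the sign conventions when $c < d$ (so that $|c - d|^{p-2}(c - d)$ is negative) and handling the degenerate subcase $\alpha = 0$ or $c = d$, which is why I would reduce immediately to the clean case $c \geq d$ before invoking the tangent-line/Young combination.
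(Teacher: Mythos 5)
The paper does not prove this lemma at all: it is imported verbatim as Lemma 6.2 of the cited reference \cite{Pi-id} (Amghibech's discrete Picone identity), so there is no in-paper argument to compare against. Your blind proof is a correct, self-contained derivation. The reduction to the four-variable inequality, the symmetry argument justifying $c\geq d$ (both sign changes on the right-hand side cancel under swapping $x$ and $y$), the tangent-line inequality for the convex function $t\mapsto|t|^p$ at $B=\alpha(c-d)$, and the final reduction to $(p-1)\alpha^p+\beta^p\geq p\,\alpha^{p-1}\beta$ via Young with exponents $p/(p-1)$ and $p$ all check out, as do the degenerate cases $c=d$ and $\alpha=0$. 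One small imprecision: in the equality discussion you say Young is ``the only nontrivial inequality used,'' but the tangent-line step is a second genuine inequality whose equality case must also be traced; fortunately strict convexity of $t\mapsto|t|^p$ forces $A=B$ there, i.e. $d(\alpha-\beta)=0$, which is the same condition $\alpha=\beta$, so the conclusion $u=kv$ a.e. survives. You should also note that in the excluded cases ($a=0$ or $c=d$) equality likewise forces $\alpha=\beta$ pointwise, which is a one-line check. With those remarks added, your argument is a complete and arguably more elementary proof than relying on the external citation.
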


\begin{Theorem}\label{wcp}
Let $g:\Om\times \mb R^+\to \mb R$ be a Carath\'{e}odary function bounded below such that the map $y \mapsto \frac{g(x,y)}{y}$ is decreasing in
$\mb R^+$ for a.e. $x \in \Om$. Let $u,v \in L^\infty(\Om)\cap X_0(\Om)$ be such that $u,v>0$ in $\Om$,
 \begin{equation}\label{wcp_n}
 \int_{\Om}u^{1-q}~dx <+\infty, \;\int_{\Om}v^{1-q}~dx < +\infty
\end{equation}
 and  satisfies
\begin{equation}\label{wcp1}
\begin{split}
(-\De)^s u \leq \frac{1}{u^q}+ g(x,u)\; \text{and}\;(-\De)^s v \geq \frac{1}{v^q}+ g(x,v) \; \text{weakly in }\; (X_0(\Om))^*.
\end{split}
\end{equation}
Moreover, if there exists $0<w\in L^\infty(\Om)$ such that $c_1 w \leq u,v\leq c_2 w$, for $c_1,c_2>0$ constants and
\begin{equation}\label{ldct}
\int_\Om |g(x,c_1w)|w~dx < +\infty,\; \; \int_\Om |g(x,c_2w)|w~dx < +\infty,
\end{equation}
then $u\leq v$ in $\Omega$.
\end{Theorem}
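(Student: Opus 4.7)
The strategy is a Picone-type argument adapted to the singular setting via Lemma~\ref{Picone-id} with $p=2$. Assume for contradiction that $\Om^+ := \{x \in \Om : u(x) > v(x)\}$ has positive measure, and set
\[ \phi_1 := \frac{(u^2-v^2)^+}{u}, \qquad \phi_2 := \frac{(u^2-v^2)^+}{v}, \]
both supported on $\Om^+$. Using the two-sided control $c_1 w \leq u, v \leq c_2 w$, I would first check that $\phi_1, \phi_2 \in X_0(\Om) \cap L^\infty(\Om)$: pointwise $\phi_1 \leq u$ and $\phi_2 \leq (c_2/c_1)^2 v$, and the uniform positivity $u, v \geq c_1 w > 0$ on $\Om$ gives enough room to control the Gagliardo seminorms of $\phi_1, \phi_2$ in terms of those of $u$ and $v$. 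The hypothesis \eqref{wcp_n} then yields $\int u^{-q}\phi_1 + \int v^{-q}\phi_2 < \infty$, while \eqref{ldct}, combined with the lower-boundedness of $g$ and the monotonicity of $y \mapsto g(x,y)/y$ (which pins $|g(x,u)/u|$ and $|g(x,v)/v|$ between the values at $c_1 w$ and $c_2 w$), gives $\int |g(x,u)|\phi_1 + \int |g(x,v)|\phi_2 < \infty$, so all test integrals are legitimate.

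Testing the sub-inequality for $u$ in \eqref{wcp1} against $\phi_1$ and the super-inequality for $v$ against $\phi_2$ and subtracting produces
\begin{align*}
& C_n^s\int_Q \frac{(u(x)-u(y))(\phi_1(x)-\phi_1(y)) - (v(x)-v(y))(\phi_2(x)-\phi_2(y))}{|x-y|^{n+2s}}\,dxdy \\
&\qquad \leq \int_{\Om^+}(u^2-v^2)\!\left[\frac{1}{u^{q+1}}-\frac{1}{v^{q+1}}\right]dx + \int_{\Om^+}(u^2-v^2)\!\left[\frac{g(x,u)}{u}-\frac{g(x,v)}{v}\right]dx.
\end{align*}
The heart of the argument is to show the left-hand side is non-negative. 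On $\Om^+ \times \Om^+$, expanding with $\phi_1 = u - v^2/u$ and $\phi_2 = u^2/v - v$ gives the integrand
\[ (u(x)-u(y))^2 + (v(x)-v(y))^2 - (u(x)-u(y))\!\left(\frac{v(x)^2}{u(x)}-\frac{v(y)^2}{u(y)}\right) - (v(x)-v(y))\!\left(\frac{u(x)^2}{v(x)}-\frac{u(y)^2}{v(y)}\right), \]
which is non-negative by two applications of Lemma~\ref{Picone-id}, one with the pair $(v,u)$ and one with $(u,v)$. On the mixed region $\Om^+ \times (\mb R^n \setminus \Om^+)$ the integrand collapses to $(u(x)^2-v(x)^2)\!\left[\frac{v(y)}{v(x)}-\frac{u(y)}{u(x)}\right]$, which is non-negative because $u(x) > v(x) > 0$ together with $u(y) \leq v(y)$ on $\Om \setminus \Om^+$ (and $u(y)=v(y)=0$ outside $\Om$) gives $v(x)u(y) \leq u(x)v(y)$. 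The symmetric mixed region is handled identically, and the doubly-exterior region contributes zero since $\phi_1, \phi_2$ vanish there.

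Finally, on $\Om^+$ one has $u > v > 0$ and $q > 0$, so $(u^2-v^2)(u^{-q-1}-v^{-q-1}) < 0$ strictly, making the first right-hand-side integral strictly negative; the second right-hand-side integral is $\leq 0$ by the hypothesis that $y \mapsto g(x,y)/y$ is decreasing. This contradicts the non-negativity of the left-hand side, forcing $|\Om^+|=0$ and hence $u \leq v$ a.e.\ in $\Om$. The main technical hurdle I anticipate is the admissibility step: rigorously verifying $\phi_1, \phi_2 \in X_0(\Om)$ from only the stated regularity $u, v \in X_0(\Om) \cap L^\infty(\Om)$, and carefully splitting $Q = \mb R^{2n} \setminus ((\mb R^n \setminus \Om) \times (\mb R^n \setminus \Om))$ into the four sub-regions above to validate the pointwise Picone-type bound; once these are in place the argument reduces to algebra and two sign checks.
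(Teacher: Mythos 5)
Your overall architecture is the same as the paper's: test the two inequalities with $(u^2-v^2)^+/u$ and $(u^2-v^2)^+/v$, split $Q$ into $\Om^+\times\Om^+$, the mixed regions and the doubly-exterior region, invoke the discrete Picone identity (Lemma \ref{Picone-id} with $p=2$) on the diagonal block, check signs on the mixed blocks, and use the monotonicity of $y\mapsto g(x,y)/y$ on the right-hand side. However, there is a genuine gap at the step you yourself flag as the "main technical hurdle", and the reason you offer for why it should go through is incorrect. You claim that "the uniform positivity $u,v\geq c_1w>0$ gives enough room to control the Gagliardo seminorms of $\phi_1,\phi_2$". But $w$ is only assumed to be a positive $L^\infty$ function; it is not bounded below by a positive constant, and in the intended application $w$ behaves like a power of $\delta(x)$ and vanishes on $\partial\Om$. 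Consequently $u$ and $v$ are not uniformly positive, and the difference quotient of $v^2/u$ involves factors of $1/(u(x)u(y))$ that blow up near the boundary, so there is no direct control of $\|v^2/u\|_{H^s}$ by $\|u\|_{H^s}$ and $\|v\|_{H^s}$. This is precisely why the paper replaces $u,v$ by $u_k=u+\frac1k$, $v_k=v+\frac1k$: for each fixed $k$ the quotients $v_k^2/u_k$, $u_k^2/v_k$ are controlled (with constants depending on $k$) and the truncations to $\Om^+$ lie in $X_0(\Om)$; the Picone argument is then run at level $k$ and the conclusion is recovered in the limit $k\to\infty$ using \eqref{ldct}, dominated convergence and Fatou's lemma. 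Without this regularization-and-limit structure your testing step is not justified, and no argument is offered that would replace it.

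Two further remarks. First, your endgame is actually a genuine (and legitimate) simplification of the paper's: by retaining the singular terms you get $\int_{\Om^+}(u^2-v^2)(u^{-q-1}-v^{-q-1})\,dx<0$ strictly whenever $|\Om^+|>0$, which contradicts the non-negativity of the left-hand side directly. The paper instead discards this term as merely non-positive, is forced into the equality case of Picone ($u=kv$ on $\Om^+$ with $k>1$), and needs a separate computation to show $k\leq1$. Your shortcut survives the $k$-regularization (on $\Om^+$ one still has $(u_k^2-v_k^2)\bigl(\frac{1}{u_ku^q}-\frac{1}{v_kv^q}\bigr)<0$), but you must verify that strict negativity persists in the limit, which requires a domination or one-sided Fatou argument for this term that you have not supplied. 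Second, your sign checks on the mixed regions are correct for the unregularized functions (where $u=v=0$ outside $\Om$ kills the exterior contribution), but note that after regularization $u_k=v_k=\frac1k$ on $\mb R^n\setminus\Om$, so the exterior block contributes a nonzero term whose sign must be checked separately, as the paper does. In short: the skeleton is right and the final contradiction is arguably cleaner than the paper's, but the admissibility of the test functions, the central technical issue, is not resolved and your proposed resolution would fail.
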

\begin{proof}
For $k>0$, let us define $u_k := u+\frac{1}{k}$ and $v_k := v+\frac{1}{k}$. Also let
\[\phi_k := \frac{u_k^2-v_k^2}{u_k}\; \; \; \text{and}\;\; \; \psi_k := \frac{v_k^2-u_k^2}{v_k}.\]
Since $u,v\in L^\infty(\Om)$, obviously $u_k,v_k \in L^\infty(\Om)$ and thus $u_k,v_k \in L^2(\Om)$. {We assumed $u,v \in X_0(\Om)$, this implies $u,v \in H^s(\Om)$. Since $\|u_k\|_{H^s(\Om)}= \|u\|_{H^s(\Om)}$ and $\|v_k\|_{H^s(\Om)}= \|v\|_{H^s(\Om)}$ we conclude that $u_k,v_k \in H^s(\Om)$. Let
\[\eta_k := \frac{v_k^2}{u_k} \; \text{and}\; \xi_k:= \frac{u_k^2}{v_k} \]
then we claim that $\eta_k, \xi_k \in H^s(\Om)$. Consider
\begin{equation}
\begin{split}
|\eta_k(x)-\eta_k(y)| &= \left| \frac{v_k^2(x)- v_k^2(y)}{u_k(x)}- \frac{v_k^2(y)(u_k(x)-u_k(y))}{u_k(x)u_k(y)}\right|\\
&\leq k|v_k(x)-v_k(y)||v_k(x)+v_k(y)|+ \|v_k\|_{L^{\infty}(\Om)}^2 \frac{|u_k(x)-u_k(y)|}{u_k(x)u_k(y)}\\
& \leq 2k\|v_k\|_{L^{\infty}(\Om)}|v_k(x)-v_k(y)| + k^2 \|v_k\|_{L^{\infty}(\Om)}^2 |u_k(x)-u_k(y)|\\
& \leq C(k,\|v_k\|_{L^{\infty}(\Om)})(|v_k(x)-v_k(y)| +|u_k(x)-u_k(y)|),
\end{split}
\end{equation}
where $ C(k,\|v_k\|_{L^{\infty}(\Om)})>0$ is a constant. Since $u_k,v_k \in H^s(\Om)$, $\eta_k \in H^s(\Om)$. Similarly $\xi_k\in H^s(\Om)$. Clearly, this implies that $\phi_k,\psi_k \in H^s(\Om)$. We note that $\phi_k, \psi_k$ can also be written as
\[\phi_k= \frac{(u-v)(u_k+v_k)}{u_k}\; \text{and}\; \psi_k= \frac{(v-u)(v_k+u_k)}{v_k}\]
which implies that $\phi_k,\psi_k = 0$ in $\mb R^n \setminus \Om$ i.e. $\phi_k,\psi_k \in X_0(\Om)$ since $\frac{u_k+v_k}{u_k}$ and $\frac{u_k+v_k}{v_k}$ in $L^\infty(\Om)$.}
We set $\Om^+=\{x\in \Om: u(x)>v(x)\}$ and $\Om^-=\{x\in \Om: u(x)\leq v(x)\}$. Then $\phi_k \geq 0$ and $\psi_k \leq 0$ in $\Om^+$. Let $\tilde \phi_k = \chi_{\Om^+}\phi_k$  and $\tilde \psi_k =  \chi_{\Om^+}\psi_k$. Since  $\phi_k(x)\leq \phi_k(x)-\phi_k(y)$ for $(x,y)\in \Om^+\times \Om^-$, we get
\begin{align*}
&\int_Q \frac{|\tilde \phi_k(x)- \tilde \phi_k(y)|^2}{|x-y|^{n+2s}}~dxdy\\
&= \int_{\Om^+}\int_{\Om^+} \frac{| \phi_k(x)-  \phi_k(y)|^2}{|x-y|^{n+2s}}~dxdy+ 2\int_{\Om^+}\int_{\Om^-} \frac{| \phi_k(x)|^2}{|x-y|^{n+2s}}~dxdy+ 2 \int_{\Om^+}\int_{\mc C \Om}  \frac{| \phi_k(x)|^2}{|x-y|^{n+2s}}~dxdy \\
&\leq \int_{\Om^+}\int_{\Om^+} \frac{| \phi_k(x)-  \phi_k(y)|^2}{|x-y|^{n+2s}}~dxdy+2\int_{\Om^+}\int_{\Om^-} \frac{| \phi_k(x)- \phi_k(y)|^2}{|x-y|^{n+2s}}~dxdy\\
&\quad  + 2 \int_{\Om}\int_{\mc C \Om}  \frac{| \phi_k(x)|^2}{|x-y|^{n+2s}}~dxdy = \|\phi_k\|_{X_0(\Om)}^2<+\infty.
\end{align*}
This implies $\tilde \phi_k \in X_0(\Om)$ since by definition $\tilde \phi_k =0 $ in $\mb R^n\setminus \Om$. Similarly, $\tilde \psi_k \in X_0(\Om)$.
 Using $\tilde \phi_k$ and $\tilde \psi_k$ as test functions in \eqref{wcp1}, we get
\begin{equation}\label{wcp2}
\begin{split}
\int_{\mb R^n}((-\De)^s u)\tilde \phi_k~dx &\leq \int_{\Om^+}\left(\frac{1}{u^q}+ g(x,u)\right)\phi_k~dx,\\
\int_{\mb R^n}((-\De)^s v)\tilde \psi_k~dx &\leq \int_{\Om^+}\left(\frac{1}{v^q}+ g(x,v)\right)\psi_k~dx.
\end{split}
\end{equation}
Consider
\begin{equation}\label{wcp3}
\begin{split}
&\int_{\Om^+}\int_{\Om^+} \frac{(u(x)-u(y))(\phi_k(x)-\phi_k(y))}{|x-y|^{n+2s}}~dxdy+ \int_{\Om^+}\int_{\Om^+} \frac{(v(x)-v(y))(\psi_k(x)-\psi_k(y))}{|x-y|^{n+2s}}~dxdy\\
& =  \int_{\Om^+}\int_{\Om^+}\frac{(u_k(x)-u_k(y))^2 +(v_k(x)-v_k(y))^2}{|x-y|^{n+2s}}~dxdy\\
&\quad \quad-  \int_{\Om^+}\int_{\Om^+}\frac{\left((v_k(x)-v_k(y))\left( \frac{u_k^2(x)}{v_k(x)}-\frac{u_k^2(y)}{v_k(y)}\right)+(u_k(x)-u_k(y))\left( \frac{v_k^2(x)}{u_k(x)}-\frac{v_k^2(y)}{u_k(y)}\right) \right)}{|x-y|^{n+2s}}~dxdy\\
& = \int_{\Om^+}\int_{\Om^+}\frac{M(u_k,v_k)+ M(v_k,u_k)}{|x-y|^{n+2s}}~dxdy\geq 0,
\end{split}
\end{equation}
using Lemma \ref{Picone-id} with $p=2$. We have
\[\int_{\Om^+}\left(\frac{ \phi_k}{u^q}+ \frac{ \psi_k}{v^q}\right)~dx \leq 0.\]
Using this, we get
\begin{equation}\label{wcp4}
\begin{split}
&\int_{\Om^+}\left( \frac{1}{u^q}+g(x,u)\right)\phi_k~dx+ \int_{\Om^+}\left( \frac{1}{v^q}+g(x,v)\right)\psi_k~dx\\
& \leq \int_{\Om^+}(g(x,u)\phi_k+ g(x,v)\psi_k)~dx= \int_{\Om^+}\left(\frac{g(x,u)}{u}\left(\frac{u}{u_k}\right)- \frac{g(x,v)}{v}\left(\frac{v}{v_k}\right)\right)(u_k^2-v_k^2)~dx.
\end{split}
\end{equation}
Since $\frac{u}{u_k}\to 1$ and $\frac{v}{v_k}\to 1$ a.e. in $\Om$ as $k\to +\infty$, using \eqref{ldct} and Lebesgue Dominated convergence theorem with \eqref{wcp4} we get $\lim\limits_{k \to +\infty}{\int_{\Om^+}}(g(x,u)\phi_k+g(x,v)\psi_k)~dx=0$. Therefore  \eqref{wcp4} implies that
\begin{equation}\label{wcp5}
\lim_{k\to +\infty}\int_{\Om^+ }\left( \frac{1}{u^q}+g(x,u)\right)\phi_k~dx+ \int_{\Om^+}\left( \frac{1}{v^q}+g(x,v)\right)\psi_k~dx \leq 0.
\end{equation}
From \eqref{wcp2}, we have that
\begin{equation}\label{wcp6}
\int_{\Om^+}(((-\De)^s u)\phi_k+((-\De)^s v)\psi_k)~dx \leq \int_{\Om^+}\left(\left(\frac{1}{u^q}+ g(x,u)\right)\phi_k+\left(\frac{1}{v^q}+ g(x,v)\right)\psi_k\right)~dx,
\end{equation}
We claim that
\begin{equation}\label{wcp7}
\begin{split}
&\int_Q \frac{(u(x)-u(y))(\tilde \phi_k(x)-\tilde \phi_k(y))}{|x-y|^{n+2s}}~dxdy+ \int_Q \frac{(v(x)-v(y))(\tilde \psi_k(x)-\tilde \psi_k(y))}{|x-y|^{n+2s}}~dxdy\\
 &\geq \int_{\Om^+}\int_{\Om^+} \frac{(u(x)-u(y))( \phi_k(x)- \phi_k(y))+ (v(x)-v(y))(\psi_k(x)-\psi_k(y))}{|x-y|^{n+2s}}~dxdy
\end{split}
\end{equation}
To prove this we consider
\begin{align*}
&\int_Q \frac{(u(x)-u(y))(\tilde \phi_k(x)-\tilde \phi_k(y))}{|x-y|^{n+2s}}~dxdy+ \int_Q \frac{(v(x)-v(y))(\tilde \psi_k(x)-\tilde \psi_k(y))}{|x-y|^{n+2s}}~dxdy\\
&= \int_{\Om^+}\int_{\Om^+} \frac{(u(x)-u(y))( \phi_k(x)- \phi_k(y)) + (v(x)-v(y))(\psi_k(x)-\psi_k(y))}{|x-y|^{n+2s}}~dxdy\\
& \quad +2 \int_{\Om^+}\int_{\Om^-}\frac{(u(x)-u(y))\phi_k(x)+ (v(x)-v(y))\psi_k(x)}{|x-y|^{n+2s}}~dxdy \\
&\quad \quad + 2 \int_{\Om^+}\int_{\mc C \Om} \frac{(u(x)-u(y))\phi_k(x)+ (v(x)-v(y))\psi_k(x)}{|x-y|^{n+2s}}~dxdy.
\end{align*}
Since $\phi_ku_k+\psi_kv_k =0$ by definition and $\phi_k+\psi_k \leq 0$ in $\Om^+$ and $\Om^-$ both, we get
\begin{align*}
&\int_{\Om^+}\int_{\Om^-}\frac{(u(x)-u(y))\phi_k(x)+ (v(x)-v(y))\psi_k(x)}{|x-y|^{n+2s}}~dxdy\\
&= \int_{\Om^+}\int_{\Om^-}\frac{(u_k(x)-u_k(y))\phi_k(x)+ (v_k(x)-v_k(y))\psi_k(x)}{|x-y|^{n+2s}}~dxdy\\
& = - \int_{\Om^+}\int_{\Om^-}\frac{u_k(y)\phi_k(x)+ v_k(y)\psi_k(x)}{|x-y|^{n+2s}}~dxdy \geq - \int_{\Om^+}\int_{\Om^-}\frac{v_k(y)(\phi_k(x)+\psi_k(x))}{|x-y|^{n+2s}}~dxdy\geq 0.
\end{align*}
Similarly
\begin{align*}
&\int_{\Om^+}\int_{\mc C \Om} \frac{(u(x)-u(y))\phi_k(x)+ (v(x)-v(y))\psi_k(x)}{|x-y|^{n+2s}}~dxdy\\
 & \quad \quad=\int_{\Om^+}\int_{\mc C \Om} \frac{-(\phi_k(x)+\psi_k(x))}{k|x-y|^{n+2s}}~dxdy \geq 0.
\end{align*}
This establishes our claim.
Therefore using \eqref{wcp3}, \eqref{wcp5}, \eqref{wcp6}, \eqref{wcp7} and Fatou's Lemma , we get
\begin{equation*}
\begin{split}
0 & \leq \int_{\Om^+}\int_{\Om^+}\frac{M(u,v)+ M(v,u)}{|x-y|^{n+2s}}~dxdy\leq \lim_{k \to +\infty}\left(\int_{\mb R^n}((-\De)^su)\tilde \phi_k~dx + \int_{\mb R^n}((-\De)^sv)\tilde \psi_k~dx\right)\\
&\leq  \lim_{k \to +\infty}\int_{\Om^+}\left(\left(\frac{1}{u^q}+ g(x,u)\right)\phi_k+\left(\frac{1}{v^q}+ g(x,v)\right)\psi_k\right)~dx \leq 0.
\end{split}
\end{equation*}
{This implies that}
\[\int_{\Om^+}\int_{\Om^+}\frac{M(u,v)+M(v,u)}{|x-y|^{n+2s}}~dxdy =0.\]
{Therefore} $M(u,v)=0= M(v,u)$ a.e. in $\Om^+$.  So using Lemma \ref{Picone-id}, we have $u = kv$ a.e. in $\Om^+$, for some constant $k>0$. By definition of $\Om^+$, we have $k>1$. Consider
\begin{equation}\label{wcp8}
\begin{split}
&\int_{\Om^+}(((-\De)^su)u-((-\De)^s kv)kv)~dx = \int_{\Om^+}((-\De)^su- (-\De)^s kv)kv)~dx\\
& = \int_{\Om^+} ( (-\De)^s(u-kv))kv~dx =2C_n^s\int_{\Om^+}\left( P.V.\int_{\mb R^n}\frac{(u-kv)(x)- (u-kv)(y)}{|x-y|^{n+2s}}dy\right)kv(x)dx\\
& = 2C_n^s \int_{\Om^+}P.V.\int_{\Om^-}\frac{(kv-u)(y)}{|x-y|^{n+2s}}kv(x)dxdy \geq 2C_n^s k^2 \int_{\Om^+}P.V.\int_{\Om^-}\frac{(v-u)(y)}{|x-y|^{n+2s}}v(x)dxdy\geq 0.
\end{split}
\end{equation}
From \eqref{wcp_n} and \eqref{wcp1} we get
\begin{equation}
\begin{split}
\int_{\Om^+}((-\De)^su)u~dx &\leq \int_{\Om^+} \left(\frac{g(x,kv)}{kv}(kv)^2 + k^{1-q}v^{1-q}\right)~dx\; \text{and}\;\\
k^2 \int_{\Om^+}((-\De)^sv)v~dx & \geq \int_{\Om^+} \left(\frac{g(x,v)}{v}(kv)^2 + k^2 v^{1-q}\right)dx
\end{split}
\end{equation}
which implies that $k\leq 1$ by \eqref{wcp8}. This gives a contradiction which implies $u\leq v$ in $\Om$. \hfill{\QED}
\end{proof}

 \noi \textbf{Proof of Theorem \ref{mt4}: } Under the hypothesis on $f$, we let $l,\mu>0$ be such that $-l \leq f(x,y)\leq \mu y+l$. Let $\mu$ be such that $0<  \mu < \la_{1,s}(\Om)$.
 Suppose $w$ is a solution of  \eqref{sp_mt1_4}. For $\eta>0$, we define
 \begin{equation}\label{mt4_1}
 \underline{u}= \eta w.
 \end{equation}
Since $w \in \mc C \cap C_0(\overline{\Om})$ (see \eqref{sp_mt1_5}-\eqref{sp_mt1_6}), we can choose $\eta>0$ small enough such that
\begin{equation}\label{mt4_3}
\begin{split}
(-\De)^s\underline{u} -\frac{1}{\underline{u}^q} &\leq -l \leq f(x,\underline{u}) \; \text{in}\; \Om,\; \; \underline{u}=0 \; \text{in}\; \mb R^n \setminus \Om.
\end{split}
\end{equation}
Let $0<M,M^\prime$ and
 \begin{equation}\label{mt4_2}
 \overline{u}= Mw+M^\prime \phi_{1,s}
 \end{equation}
 Let $\e>0$ and define $\Om_\e:= \{x \in \Om:\; \text{dist}(x,\partial \Om)<\e \}$. Since we know that $w=0$ in $\mb R^n \setminus \Om$, we can choose $\e>0$ small enough such that $0\leq w\leq c$ in $\Om_\e$ where $c>0$ is such that
 \[\left(M-\frac{1}{M^q}\right)\frac{1}{c^q} \geq \mu Mc+l\]
 which is possible for $c>0$ sufficiently small. Therefore in $\Om_\e$ we get
 \begin{equation}\label{mt4_4}
 \begin{split}
 (-\De)^s\overline{u}- \frac{1}{\overline{u}^q} &= \left(M-\frac{1}{M^q}\right)\frac{1}{w^q} + M^\prime \la_{1,s}\phi_{1,s}\geq \left(M-\frac{1}{M^q}\right)\frac{1}{c^q}+ M^\prime\mu\phi_{1,s} \\
 & \geq \mu Mc+l + M^\prime \mu \phi_{1,s} \geq \mu Mw +l + M^\prime \mu\phi_{1,s}= \mu \overline{u}+l.
 \end{split}
 \end{equation}
 Now consider the set $\Om \setminus \Om_\e= \{x \in \Om:\; d(x,\partial \Om)\geq \e\}$. Then there exists a constant $c_1>0$ (depending on $\e$) such that $0< c_1\leq \phi_{1,s}$ in $\Om\setminus \Om_\e$. Since $\mu < \la_{1,s}$ and $M$ is fixed now, we choose
 \[M^\prime \geq \frac{\mu M\|w\|_\infty+l}{c_1(\la_{1,s}-\mu)}.\]
 Then in $\Om \setminus \Om_\e$ we get
  \begin{equation}\label{mt4_7}
 \begin{split}
 (-\De)^s\overline{u}- \frac{1}{\overline{u}^q} = \left(M-\frac{1}{M^q}\right)\frac{1}{w^q} + M^\prime \la_{1,s}\phi_{1,s}\geq M^\prime\la_{1,s}\phi_{1,s}  \geq \mu Mw +l + M^\prime \mu\phi_{1,s}= \mu \overline{u}+l.
 \end{split}
 \end{equation}
 Therefore \eqref{mt4_4} and \eqref{mt4_7} implies that $\overline{u}$ satisfies
 \begin{equation}\label{mt4_9}
 \begin{split}
 (-\De)^s\overline{u} -\frac{1}{\overline{u}^q} & \geq \mu \overline{u}+l \geq f(x,\overline{u}) \; \text{in}\; \Om,\; \; \overline{u}=0 \; \text{in}\; \mb R^n \setminus \Om.
\end{split}
\end{equation}
By construction, $\underline{u}, \overline{u}\in \mc C$. Since $f$ uniformly locally lipschitz with respect to second variable, we can find appropriate constant $K_0>0$ such that the map $t \mapsto K_0 t + f(x,t)$ is non-decreasing in $[0,\|\overline{u}\|_{X_0(\Om)}]$, for a.e. $x \in \Om$. We define an iterative scheme to obtain a sequence $\{u_k\} \subset X_0(\Om)\cap \mc C\cap C_0(\overline{\Om})$ (using Theorem \ref{mt2}) as solution to the problem
\begin{equation}\label{mt4_5}
\left\{
\begin{split}
(-\De)^s u_k -\frac{1}{u_k^q}+K_0 u_k = f(x,u_{k-1})+K_0 u_{k-1}\; \text{in}\; \Om,\;\;u_k =0, \; \text{in}\; \mb R^n\setminus \Om,
\end{split}
\right.
\end{equation}
where $u_0:= \underline{u}$. This scheme is well defined because by the choice of $K_0$ and using weak comparison principle (Lemma \ref{weak_comp_principle}), we get that
\begin{equation}\label{mt4_6}
{\underline{u} \leq} u_k \leq \overline{u},
\end{equation}
 for all $k$. This implies for each $k$, right hand side of \eqref{mt4_5} is in $L^\infty(\Lambda_T)$ and hence Theorem \ref{mt2} is applicable for \eqref{mt4_5}. Again using Lemma \ref{weak_comp_principle} and monotonicity of the map $t \mapsto K_0 t + f(x,t)$, we have that the sequence $\{u_k\}$ is a monotone increasing sequence. From \eqref{mt4_5} we have $(\De)^su_k = g_k \in L^\infty(\Om^\prime)$, where $g_k := u_k^{-q}- K_0 u_k + f(x,u_{k-1})+K_0 u_{k-1} \leq \underline{u}^{-q}-K_0 \underline{u}+ f(x,\overline{u})+ K_0 \overline{u}$ and $\Om^\prime$ is a compact subset of $\Om$. Following proof of Theorem $1.2$ of \cite{aJs}, we get that $u_k \in C^{s-\e}(\mb R^n)$ for each $\e>0$ small enough when $q=1$ and $u_k\in C^{\frac{2s}{q+1}}(\mb R^n)$ when $q>1$. Also since \eqref{mt4_6} holds, we get that $\{u_k\}$ is a uniformly bounded sequence in $C_0(\overline{\Om})\cap \mc C$. {Therefore by Arzela Ascoli theorem we know that there exist} $\tilde u \in C_0(\overline{\Om})\cap \mc C$ such that $u_k \uparrow \tilde u$ in $C_0(\overline{\Om})\cap \mc C$ as $k \to \infty$. Therefore it must be Cauchy in $C_0(\overline{\Om})\cap \mc C$ and this alongwith \eqref{mt4_5} gives that $\{u_k\}$ is Cauchy in $X_0(\Om)$ which converges to $\tilde u$ in $X_0(\Om)$. Now passing on to the limits as $k\to \infty$ and using Lebesgue Dominated convergence theorem (since $u_k\leq \overline{u}$, for all $k$) in \eqref{mt4_5}, we obtain $\tilde u$ to be solution to $(Q^s)$. Lastly, uniqueness of $\tilde u$ follows from Theorem \ref{wcp}.\QED

\section{Existence of solution to $(P^s_t)$ and its regularity}
We devote this section to study the problem $(P^s_t)$ which is our concern for this article. Precisely, we will prove Theorem \ref{mt3} and Proposition \ref{para_sing_prop2}.\\
\noi \textbf{Proof of Theorem \ref{mt3}: } We will closely make use of arguments in the proof of Theorem \ref{mt2} while proving this theorem. Since $T>0$, we define $\De_t:= \frac{T}{n}$, where $n \in \mb N^*$. Taking $u^0= u_0$, we obtain a sequence $\{u^k\} \subset \mc C \cap X_0(\Om) \subset L^\infty(\Om)$ as solutions to following iterative scheme
\begin{equation}\label{mt3_1n}
\begin{split}
u^k- \De_t \left((-\De)^s u^k + \frac{1}{(u^k)^q} \right) = \De_t f(x,u^{k-1})+u^{k-1}\; \text{in}\; \Om.
\end{split}
\end{equation}
Since $u^0 \in \mc C\cap X_0(\Om)$ and $\De_t f(x,u^{k-1})+ u^{k-1} \in L^\infty(\Lambda_T)$ for each $k$, we can apply Theorem \ref{mt2} to obtain the sequence $\{u^k\} \subset \mc C \cap X_0(\Om) \subset L^\infty(\Om)$. In \eqref{mt4_1} and \eqref{mt4_2}, we can choose $\eta,M, M^\prime>0$ appropriately such that $\underline{u} \leq u_0 \leq \overline{u}$ (since $u_0 \in \mc C$). Using $-l \leq f(x,y)\leq \mu y +l $ and applying Lemma \ref{weak_comp_principle} iteratively, we can get $\underline u \leq u^k\leq \overline u$, for all $k$. We remark that it is clear from definition in \eqref{mt4_1} that $\underline u$ and  $\overline u$ are independent of $\De_t$. Let $u_{\De_t}$ and $\tilde u_{\De_t}$  be as defined in \eqref{mt2_2} alongwith the assumption that $u_{\De_t}(t)= u_0$, when $t<0$. Then it is easy to see that \eqref{mt2_3} is satisfied with $h_{\De_t}(t,x):= f(x,u_{\De_t}(t-\De_t,x))$, for $t \in [0,T]$ and $x \in \Om$. Using \eqref{mt2_7}, we have $ \underline u \leq u_{\De_t} \leq \overline u$. Therefore,
\begin{equation*}
h_{\De_t}(t,x) \leq \mu u_{\De_t}(t-\De_t,x)+l \in L^\infty(\Lambda_T)
\end{equation*}
independent of $\De_t$. Hence we can use similar techniques as in proof of Theorem \ref{mt2} to get
\begin{equation}\label{mt3_1}
\begin{split}
&u_{\De_t}, \tilde u_{\De_t} \in L^\infty ([0,T]; X_0(\Om)\cap \mc C), \; u_{\De_t}, \tilde u_{\De_t}  \in L^\infty(\Lambda_T),\frac{\partial \tilde u_{\De_t}}{\partial t}  \in L^2(\Lambda_T),\\
&\;\|u_{\De_t} - \tilde u_{\De_t}\|_{L^2(\Om)} \leq C(\De_t)^{\frac12}\; \text{and}\; \frac{1}{(u_{\De_t})^q}  \in L^\infty([0,T]; (X_0(\Om))^*)
\end{split}
\end{equation}
{uniformly in $\De_t$}. So we can use the Banach Alaoglu theorem and \eqref{mt3_1} to get $u \in L^\infty([0,T];X_0(\Om))$ and $u \in L^\infty (\Lambda_T)$ such that, upto a subsequence,
\begin{equation}\label{mt3_2}
\begin{split}
 u_{\De_t}, \tilde u_{\De_t} \xrightharpoonup{\text{*}} L^\infty([0,T]; X_0(\Om))\; \text{and in}\; L^\infty(\Lambda_T),\;\frac{\partial \tilde u_{\De_t}}{\partial t} \rightharpoonup \frac{\partial u}{\partial t} \; \text{in}\; L^2(\Lambda_T)
\end{split}
\end{equation}
as $\De_t \to 0^+$. Also similar to proof of Theorem \ref{mt2}, we get
\begin{equation}\label{mt3_2}
u_{\De_t}, \; \tilde u_{\De_t} \to u \; \text{in}\; L^\infty([0,T];L^2(\Om))\; \text{and}\; u \in C([0,T];L^2(\Om)).
\end{equation}
In addition, if $M>0$ denotes the Lipschitz constant for $f$ then for $t \in [0,T]$
\begin{equation}\label{mt3_3}
\begin{split}
\|h_{\De_t}(t,\cdot)- f(\cdot,u(t,\cdot))\|_{L^2(\Om)}&= \|f(\cdot, u_{\De_t}(t-\De_t,\cdot))- f(\cdot,u(t,\cdot))\|_{L^2(\Om)}\\
 &\leq M\|u_{\De_t}(t-\De_t,\cdot)- u(t,\cdot)\|_{L^2(\Om)}.
\end{split}
\end{equation}
From \eqref{mt3_2} and \eqref{mt3_3}, we deduce that $h_{\De_t}(t,x) \to f(x,u(x))$ in $L^\infty([0,T]; L^2(\Om))$. Finally, following exactly the last part of the proof of Theorem \ref{mt2}, we can show that $u \in \mc A(\Lambda_T)$ and $u$ is a weak solution to $(P_t^s)$.

It remains to prove the uniqueness. For that, let $v\in \mc A(\Lambda_T)$ be another weak solution to $(P^s_t)$. For fix $t_0 \in [0,T]$ we have
\begin{equation}\label{mt3_4}
\begin{split}
&\int_0^{t_0}\int_{\Om} \frac{\partial(u-v)}{\partial t}(u-v)~dxdt+ \int_0^{t_0}\int_{\mb R^n}((-\De)^s(u-v))(u-v)~dxdt\\
& \quad - \int_0^{t_0}\int_{\Om} \left( \frac{1}{u^q}-\frac{1}{v^q}\right)(u-v)~dxdt= \int_0^{t_0}\int_{\Om}(f(x,u(x)- f(x,v(x))))(u-v)~dxdt.
\end{split}
\end{equation}
From \eqref{mt3_4}, $u(0,x)=v(0,x)=u_0(x)$ in $\Om$ and $f$ being locally Lipschitz uniformly in $\Om$, we get
\begin{equation}\label{mt3_5}
\begin{split}
&\frac12\|(u-v)(t_0)\|_{L^2(\Om)}+ \int_0^{t_0}\int_{\mb R^n}((-\De)^s(u-v))(u-v)~dxdt- \int_0^{t_0}\int_{\Om}\left( \frac{1}{u^q}-\frac{1}{v^q}\right)(u-v)~dxdt\\
&\leq M \int_0^{t_0}\int_{\Om}|u-v|^2~dxdt,
\end{split}
\end{equation}
where $M$ is Lipschitz constant for $f$. From Lemma \ref{weak_comp_principle}, we know that the operator $A$ is strictly monotone which gives
\begin{equation*}
\begin{split}
 0 & < \int_{0}^{t_0}\int_{\Om}|(u-v)|^2~dxdt+ \int_0^{t_0}\int_{\mb R^n}((-\De)^s(u-v))(u-v)~dxdt\\
 & \quad \quad-\int_0^{t_0}\int_{\Om} \left( \frac{1}{u^q}-\frac{1}{v^q}\right)(u-v)~dxdt.
\end{split}
\end{equation*}
Using this with \eqref{mt3_5}, we get
\[\frac12\|(u-v)(t_0)\|_{L^2(\Om)}\leq M_0\int_0^{t_0}\int_{\Om}|u-v|^2~dxdt, \]
where $M_0>0$ is a constant. By Gronwall's inequality, we get $\|(u-v)(t_0,\cdot)\|_{L^2(\Om)}\leq \|(u-v)(0,\cdot)\|_{L^2(\Om)}\exp(M_0t_0)$. Since $u(0,\cdot)=v(0,\cdot)$ and this holds for all $t_0\in [0,T]$, we get $u\equiv v$. This completes the proof.\QED

Now we give the proof of Proposition \ref{para_sing_prop2}.\\

\noi \textbf{Proof of Proposition \ref{para_sing_prop2}: } Using Proposition \ref{para_sing_prop1} above and following the proof of Proposition $0.2$ of \cite{bbg}, the result can be similarly obtained. \QED

\section{Asymptotic Behavior}
In this section, we present the proof of Theorem \ref{mt5}.

\noi \textbf{Proof of Theorem \ref{mt5}: } Let {$\underline{u}, \overline{u} \in  \mc C \cap X_0(\Om)\cap C_0(\overline \Om)$}, be the sub and supersolution respectively to
\begin{equation}\label{aymp1}
\left\{
\begin{split}
(-\De)^s u - \frac{1}{u^q}&= f(x,u)\; \text{in}\; \Om,\\
u &=0 \; \text{in} \; \mb R^n \setminus \Om,
\end{split}
\right.
\end{equation}
where $\underline{u}, \overline{u}$ is defined in \eqref{mt4_1}. We can choose $\eta >0$ small enough and $M>0$ large enough so that $\underline{u}\leq u_0 \leq \overline{u}$ which is possible because we took $u_0 \in \mc C \cap X_0(\Om)$. Let $u$ be solution of $(P_t^s)$ and $v_1$ and $ v_2$ be unique solutions to $(P^s_t)$ with initial datum $\underline u$ and $\overline u$. The existence of $v_1$ and $v_2$ are justified through Theorem \ref{mt3}. We claim that $\underline{u}, \overline{u}\in \overline{\mc D(L)}^{L^\infty(\Om)}$. Let $g,h \in (X_0(\Om))^{*}$ be functions such that $L(\underline u) = g$ and  $L(\overline u) = h$. Using \eqref{mt4_3}, we have $g\leq 0$ and $h \geq 0$. Now, let  $\{g_k\} = \max\{g, -k\}$, $\{h_k\}= \min\{h,k\}$ and $\{u_k\},\{w_k\}$ be two sequences in $\mc D(L)$ defined by $L(u_k)= g_k$, $L(w_k)=h_k$. Since $L$ is a monotone operator, as Lemma \ref{weak_comp_principle} we can show a similar kind of weak comparison principle concerning $L$. Using that, we can get $\{u_k\}$ is non increasing while $\{w_k\}$ is non decreasing. By definition of $g_k, h_k$, we can show that $g_k \to g$ and $h_k \to h$ in $(X_0(\Om))^*$ as $k \to \infty$. This implies $u_k \to \underline u$ and $w_k \to \overline u$ in $X_0(\Om)$ as $k \to \infty$. Therefore, upto a subsequence, $u_k \to \underline u$ and $w_k \to \overline u$ pointwise a.e. in $\Om$ as $k \to \infty$. Using Dini's theorem, we get $u_k \to \underline u$ and $w_k \to \overline u$ in $L^\infty(\Om)$ as $k \to \infty$. This proves our claim.

Now we can use Theorem \ref{mt3} and Proposition \ref{para_sing_prop2} to obtain $v_1,v_2 \in C([0,T];C_0(\overline{\Om}))$. Taking $\underline{u}^0= \underline u$(respectively $\overline{u}^0= \overline u$), we consider the sequence $\{\underline{u}^k\}$(respectively $\{\overline{u}^k\}$) which is non decreasing(respectively non increasing) as solutions to the iterative scheme given by \eqref{mt3_1n}, for $0<\De_t<1/M$ where $M$ denotes the Lipschitz constant of $f$ on $[\underline u, \overline u]$.  If the sequence $\{u^k\}$ denotes the one that is obtained in \eqref{mt3_1n}, then by the choice of $\De_t$ we can show that
\begin{equation}\label{asymp2}
\underline{u}^k \leq u^k \leq \overline{u}^k.
\end{equation}
Let $u$ denotes the weak solution of $(P^s_t)$ as obtained in proof of Theorem \ref{mt3}. We can follow the proof of Theorem \ref{mt3} and use \eqref{asymp2} to obtain
\begin{equation}\label{asymp3}
v_1(t)\leq u(t)\leq v_2(t).
\end{equation}
Consider the maps $t\mapsto v_1(t,x)$ and $t \mapsto v_2(t,x)$ which are non decreasing and non increasing respectively. Let $v_1(t)\to \tilde v_1$ and $v_2(t) \to \tilde v_2$ as $t \to \infty$. Now let $S(t)$ denotes the semigroup on $L^\infty(\Om)$ generated by the given evolution equation $u_t + \la L(u)=f(x,u)$. Then we know
\[\tilde v_1 = \lim_{t^\prime \to +\infty}S(t^\prime +t)(\underline u)= S(t)\lim_{t^\prime \to +\infty}S(t^\prime)(\underline u)= S(t)\lim_{t^\prime \to +\infty}v_1(t^\prime)= S(t)\tilde v_1 \]
and analogously, we obtain
\[\tilde v_2= S(t)\tilde v_1. \]
Then $\tilde v_1$ and $\tilde v_2$ are stationary solutions to $(P_t^s)$ i.e. solves $(Q^s)$. But by uniqueness of solution to $(Q^s)$ as shown in Theorem \ref{mt4}, we get $\tilde v_1 = \tilde v_2= \hat u \in C(\overline \Om)$. Therefore, by Dini's theorem we get
\[v_1(t) \to \hat u \; \text{and}\; v_2(t) \to \hat u \; \text{in} \; L^\infty(\Om)\; \text{as}\; t \to \infty.\]
Using \eqref{asymp3}, we conclude that $u(t) \to \hat u$ in $L^\infty(\Om)$ as $t \to \infty$.\QED

 \linespread{0.5}

\noindent {\bf Acknowledgements:} The authors were funded by IFCAM (Indo-French Centre for Applied Mathematics) UMI CNRS 3494 under the project "Singular phenomena in reaction diffusion equations and in conservation laws".

\end{document}